\DeclareMathOperator{\E}{\mathbb{E}}
\let\Prob\relax
\DeclareMathOperator{\Prob}{\mathbb{P}}
\DeclareMathOperator{\Cov}{\mathrm{Cov}}
\newtheorem{lemma}{Lemma}
\numberwithin{lemma}{section}
\newtheorem{theorem}{Theorem}
\newtheorem{hypo}{Conjecture}
\newtheorem{prop}{Proposition}
\newtheorem{cor}{Corollary}
\newtheorem{rem}{Remark}
\newtheorem*{hypo*}{Conjecture}
\title{Positivity of partial sums of a random multiplicative function and corresponding problems for the Legendre symbol}
\author{Petr Kucheriaviy}
\address{
Steklov Mathematical Institute of Russian Academy of Sciences, 8 Gubkina st., Moscow, 119991, Russia}
\email{peter.ktchr@gmail.com}
\begin{document}

\begin{abstract}
Let $f(n)$ be a random completely multiplicative function such that $f(p) = \pm 1$ with probabilities $1/2$ independently at each prime. We study the conditional probability, given that $f(p) = 1$ for all $p < y$, that all partial sums of $f(n)$ up to $x$ are nonnegative. We prove that for $y \ge C \frac{(\log x)^2 \log_2 x}{\log_3 x}$ this probability equals $1 - o(1)$.
We also study the probability $P_x$ that $\sum_{n \le x} \frac{f(n)}{n}$ is negative. We prove that $P_x \ll \exp \left( - \exp \left( \frac{\log x \log_4 x}{(1 + o(1)) \log_3 x} \right) \right)$, which improves a bound given by Kerr and Klurman. Under a  conjecture closely related to Halász's theorem, we prove that
$P_x \ll \exp(-x^{\alpha})$ for some $\alpha > 0$.
Let $\chi_p(n) = \left( \frac{n}{p} \right)$ be the Legendre symbol modulo $p$. For a prime $p$ chosen uniformly at random from $(x, 2x]$, we express the probability that all partial sums of $\frac{\chi_p(n)}{n}$ are nonnegative in terms of the probability that partial sums of $\frac{f(n)}{n}$ are nonnegative. 
\end{abstract}

\maketitle

\section{Introduction}

\subsection{Partial sums of $f(n)$}

Let $\chi_p(n) = \left( \frac{n}{p} \right)$ be the Legendre symbol $\pmod p$. 
Let $\mathcal{L}^+$ denote the set of primes $p$ such that the partial sums of $\chi_p(n)$ are all nonnegative. 
The motivating problem for us is whether $\mathcal{L}^+$ is infinite. 
Primes in $\mathcal{L}^+$ are also remarkable, because the corresponding Fekete polynomial has no zeros in $(0, 1)$ 
and it turns out that the
 $L$-function $L(s, \chi_p)$ has no zeros in $(0, 1)$. In particular such $L$-function has no Siegel zeros.

Let $x$ be a large number and let us choose a prime $p \in (x, 2x]$ uniformly at random. 
Kalmynin \cite{kalmynin2023quadratic} proved that $\Prob(p \in \mathcal{L}^+) \ll (\log \log x)^{-c}$, where $c \approx 0.0368$. As indicated in \cite[p. 5]{klurman2024sign}, this bound can be  substantially improved conditionally on the non-existence of Siegel zeros.

Let us define a random completely multiplicative function $f$ by taking $f(p) = \pm 1$ with probabilities $1/2$ independently at each prime. Denote by $\mathcal{F}$ the probability space of such functions.

For a fixed $N$ the random vector $(\chi_p(1), \chi_p(2), \ldots, \chi_p(N))$ converges in distribution to 
$(f(1), f(2), \ldots, f(N))$ as $x \to \infty$ due to  the quadratic reciprocity law and equidistribution of primes in arithmetic progressions (Dirichlet's theorem).
In this sense such a random function $f : [1, N] \cap \mathbb{N} \to \{1, -1\}$ is a good model for $\chi_p : [1, N] \cap \mathbb{N} \to \{1, -1\}$ if $N$ is small enough in comparison with $x$. 

For large $N$ the model becomes inadequate; for instance $\sum_{n = 1}^{p-1} \chi_p(n) = 0$, a property not generally satisfied by an arbitrary $f \in \mathcal{F}$. See \cite{hussain2023limiting}, where a better model on a large scale is defined.

One can try to find $p \in \mathcal{L}^+$ among those primes for which the least quadratic non-residue $n_p$ is large. Then one can expect that $\sum_{n \le y} \chi_p(n)$ is dominated by the contribution of the $(n_p - 1)$-smooth part.
With that in mind, we formulate the following problem. Let us denote by $\mathcal{L}_x^{+}$ the set of completely multiplicative functions $f$ taking values $\pm 1$ such that all partial sums of $f(n)$ up to $x$ are nonnegative. 
What should $y = y(x)$ be so that $
\Prob\left(f \in \mathcal{L}_x^+ \mid f(p) = 1 \, (p \le y)\right) = 1 - o(1)?
$

 \begin{theorem}\label{thm sum f(n)}
There exist $C > 0, x_0 > 0$ such that for any $x > x_0$ and any
\[
y \ge C \frac{(\log x)^2 \log_2 x}{\log_3 x}
\]
we have $\Prob\left(f \in \mathcal{L}_x^+ \mid f(p) = 1 \, (p \le y)\right) = 1 - o(1)$ as $x$ tends to infinity. 
\end{theorem}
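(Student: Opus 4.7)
I would decompose $S(t) := \sum_{n \le t} f(n)$ into a deterministic positive main term counting $y$-smooth integers plus a mean-zero random fluctuation, and show that the fluctuation is uniformly dominated by the main term on $t \le x$.

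Conditional on $f(p) = 1$ for $p \le y$, every $n$ factors uniquely as $n = abc^2$ with $a$ being $y$-smooth, $b$ squarefree and $y$-rough, and $c$ being $y$-rough with $(b, c) = 1$; then $f(n) = f(b)$. Grouping by $b$ gives $S(t) = \Psi(t, y) + R(t)$ with
\[
R(t) = \sum_{\substack{b > 1 \\ b \text{ squarefree, } y\text{-rough}}} f(b)\, w_t(b), \qquad w_t(b) = \sum_{\substack{c \text{ rough} \\ (b, c) = 1}} \Psi\!\bigl(t/(bc^2),\, y\bigr),
\]
where $\Psi(t, y)$ is the count of $y$-smooth integers $\le t$. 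Since $\{f(b)\}$ are orthonormal characters on $\{\pm 1\}^{\{p > y\}}$, one has $\Var[R(t)] = \sum_{b > 1} w_t(b)^2$. The key step is to prove $\Var[R(t)] = o(\Psi(t, y)^2)$ uniformly on $t \in [y, x]$. One splits by $d = \omega(b)$ and uses the Dickman asymptotic $\rho(u - d) \asymp u^d \rho(u)$ (with $u = \log t/\log y$); the dominant $d = 1$ contribution breaks into primes $p \in (y, t/y]$, where $w_t(p) = \Psi(t/p, y)$ carries a Dickman factor, and primes $p > t/y$, where $w_t(p) = \lfloor t/p \rfloor$. Both ranges are controlled by a saddle-point-type estimation tuned to the chosen $y$.

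The linear part $R_1(t) = \sum_{y < p \le t} f(p)\, w_t(p)$ is a weighted sum of independent Rademachers, so Hoeffding's inequality gives a genuinely sub-Gaussian tail $\Prob(|R_1(t)| > \Psi(t, y)/2) \ll \exp\!\bigl(-c\, \Psi(t, y)^2/\Var[R_1(t)]\bigr)$. The higher-degree remainder has variance smaller by a factor that decays in $d$, and Bonami--Beckner hypercontractivity applied per degree yields matching stretched-exponential tails. Since $S(t) = t \ge 0$ automatically for $t \le y$, only $t \in (y, x]$ needs attention; a union bound over the $O(\log x)$ dyadic scales $T_j = 2^j y$, combined with a maximal-inequality argument for the increments $R(t) - R(T_j)$ on each block $[T_j, T_{j+1}]$ (applying the same variance framework to the incremental sum together with $|R(t) - R(t-1)| \le 1$), upgrades the pointwise tails to $S(t) \ge 0$ for every $t \le x$.

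The main obstacle is passing from the variance bound to a tail bound strong enough to survive the union bound. Chebyshev alone is too weak, so one must exploit both the Rademacher-sum structure of $R_1$ (via Hoeffding, for genuinely sub-Gaussian concentration) and the hypercontractive inequalities for the higher-degree Rademacher chaos. The quantitative heart of the argument is the variance estimate for $R(t)$, which requires a careful analysis of the Dickman function across the full range of primes $p \in (y, t]$, fine-tuned to the threshold $y \ge C (\log x)^2 \log_2 x/\log_3 x$ so that the stretched-exponential tail still beats the union-bound loss.
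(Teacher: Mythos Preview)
Your plan is essentially the paper's approach: the paper writes $\sum_{n\le t}f(n)=\Psi^*(t,y)+\sum_{b>1}^{\flat}f(b)\,\Psi^*(t/b,y)$ (your $c^2$-sum is absorbed into $\Psi^*$, so the main term is $\Psi^*$ rather than $\Psi$), and controls high moments of the chaos via the Bonami--Hal\'asz inequality. Instead of Dickman asymptotics it uses the saddle-point bound $\Psi(t/b,y)\ll\Psi(t,y)/b^{\alpha(t,y)}$, which makes the moment computation clean: $\sum_{b>1}^{\flat}b^{-2\alpha}(2m-1)^{\omega(b)}=\prod_{p>y}(1+(2m-1)p^{-2\alpha})-1$, and optimising in $m$ yields a tail $\exp\bigl(-c_0\delta^2(2\alpha-1)y^{2\alpha-1}\log y\bigr)$, which at the threshold $y$ is only of order $(\log x)^{-c}$.

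The genuine gap in your outline is the passage from pointwise to uniform control over $t\le x$. With a tail of size $(\log x)^{-c}$ you cannot union-bound over all integers $t\le x$, and your ``maximal inequality for the increments together with $|R(t)-R(t-1)|\le1$'' is not an argument: between dyadic checkpoints there are $\asymp T_j$ integers, so the Lipschitz bound is useless, and L\'evy-type maximal inequalities do not apply to a Rademacher chaos whose coefficients $w_t(b)$ change with $t$. The paper resolves this in two stages. First, for $t\le X$ with $\log_2 X=\tfrac{2}{3}\log_2 x$ one has $y\ge(\log X)^3$, so the tail is strong enough to union-bound over every integer up to $X$. Second, for $X\le t\le x$ it places polylog-many checkpoints $x_{i+1}=x_i(1+1/h(x_i))$, and on each short block writes $u-x_i$ in binary to reduce $\sup_u|R(u)-R(x_i)|$ to $O(\log(x_{i+1}-x_i))$ dyadic increments; the increment weights $\Psi^*(\cdot,y)$ on a dyadic sub-block are controlled via Hildebrand's inequality $\Psi(x+z,y)-\Psi(x,y)\le\Psi(z,y)+1$. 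This smooth-number triangle inequality, together with the binary chaining, is the missing ingredient you need to carry out the step you label ``maximal inequality''.
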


It is worth mentioning that the best known lower bound $n_p = \Omega(\log p \log_3 p)$ proved by Graham and Ringrose \cite{graham1990lower} is much smaller than $y$ in Theorem \ref{thm sum f(n)} if we set $p = x$. 
In the setting of Theorem \ref{thm sum f(n)} the value $y = (\log x)^{2 + o(1)}$ seems crucial, because then the square root of the variance of $\sum_{n \le x} f(n)$ surpasses the expectation  $\Psi(x, y) = x^{1/2 + o(1)}$. Let us state this as a conjecture.

\begin{hypo} \label{conj1}
For any $\varepsilon > 0$, $x > x_0(\varepsilon)$ and $y \le (\log x)^{2 - \varepsilon}$ we have
\begin{equation} \label{L_x^+ condition}
\Prob\left(f \in \mathcal{L}_x^+ \mid f(p) = 1 \, (p \le y)\right) = o(1).
\end{equation}
\end{hypo}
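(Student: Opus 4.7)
The plan is to decompose, under the conditioning, $S(n) = \Psi(n, y) + B(n)$ with
\[
B(n) = \sum_{\substack{m \le n \\ m \text{ not } y\text{-smooth}}} f(m),
\]
and to show that $B$ acquires large enough negative fluctuations over $[1, x]$ to overcome the deterministic smooth mass $\Psi$. Since $\Psi(n, y) \le \Psi(x, y)$ for $n \le x$, it suffices to prove that $\Prob(\exists\, n \le x : B(n) < -\Psi(x, y)) = 1 - o(1)$. The de Bruijn--Hildebrand estimate together with $\rho(u) = u^{-u(1+o(1))}$ gives $\Psi(x, y) = x^{(1-\varepsilon)/(2-\varepsilon) + o(1)}$ for $y \le (\log x)^{2-\varepsilon}$, so $\Psi(x,y) \le x^{1/2 - \delta}$ for some $\delta(\varepsilon) > 0$; only fluctuations of order $x^{1/2 - \delta}$, much smaller than $\sqrt{x}$, are required.

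Next I would carry out a moment computation for $B$ via the Walsh decomposition. Writing $f(p) = \epsilon_p$ for $p > y$ (i.i.d.\ Rademacher) and $\omega_>(m)$ for the set of primes $> y$ dividing $m$ to odd multiplicity, one has $f(m) = \prod_{p \in \omega_>(m)} \epsilon_p$. This gives $\E B(n) \sim \sqrt{n}$ (dominated by non-smooth perfect squares $m = k^2$); a second-moment computation using $\#\{m \le n : \omega_>(m) = W\} \asymp \Psi(n/t_W, y)$ with $t_W = \prod_{p \in W} p$, Hildebrand's saddle-point bound $\Psi(n/t, y) \asymp \Psi(n, y)\, t^{-\alpha}$ with $\alpha = (1-\varepsilon)/(2-\varepsilon)$, and a Mertens-type sum over squarefree $y$-rough $t$, then yields $\Var B(n) \asymp n/\log y$. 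Consequently the event $\{S(n) < 0\}$ requires a deviation of $B(n)$ below its mean by $\sqrt{n}(1+o(1))$, i.e.\ by $\sqrt{\log y}$ standard deviations, which heuristically has probability at least $y^{-C}$ for some absolute $C > 0$.

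Setting $Z = \#\{n \le x : S(n) < 0\}$ then gives $\E Z \gg x\, y^{-C}$, so a Chebyshev/second-moment argument reduces the conjecture to $\Var Z = o((\E Z)^2)$, or equivalently to showing that the events $\{S(n_1) < 0\}$ and $\{S(n_2) < 0\}$ are essentially uncorrelated for most pairs $(n_1, n_2)$. The main obstacle is exactly this covariance control: the shared random signs $\epsilon_p$ with $y < p \le \sqrt{\min(n_1, n_2)}$ couple the events at different scales, and a na\"{i}ve bound loses. An alternative---probably the cleanest route in the end---would be to transfer Harper's $\Omega$-result for random multiplicative functions to the conditioned setting, establishing that $\liminf_{n \to \infty} (S(n) - \Psi(n, y))/(\sqrt{n}\,(\log\log n)^{1/4}) < 0$ almost surely under the conditional measure, uniformly for $y \le (\log x)^{2-\varepsilon}$; this would immediately imply the conjecture, but the Gaussian multiplicative chaos machinery involved is delicate to run with a $y$-dependent prior constraint on $f$.
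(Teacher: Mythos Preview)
The statement you are addressing is presented in the paper as a \emph{Conjecture}, not a theorem; the paper does not prove it. It is offered only as the expected counterpart to Theorem~1, based on the heuristic (which you correctly reproduce) that for $y \le (\log x)^{2-\varepsilon}$ the smooth mass $\Psi(x,y) = x^{(1-\varepsilon)/(2-\varepsilon)+o(1)}$ falls below $x^{1/2}$ and should be swamped by the fluctuations of the rough part. There is therefore no proof in the paper for you to be compared against.

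As for your proposal itself, it is an outline rather than a proof, and you flag the gaps honestly. Two points deserve emphasis. First, the one-point lower bound $\Prob(S(n)<0)\gtrsim y^{-C}$ is asserted only ``heuristically''; converting the moment information on $B(n)$ into such a lower-tail bound for a Rademacher chaos requires an anti-concentration or normal-approximation input that is not supplied (and your variance claim $\Var B(n)\asymp n/\log y$ would itself need justification---the saddle-point relation $\Psi(n/t,y)\asymp \Psi(n,y)\,t^{-\alpha}$ you invoke is only valid when $n/t$ stays in the Hildebrand range, and with $2\alpha<1$ the tail of the sum over $t$ needs separate treatment). Second, the decisive step---decorrelating the events $\{S(n_1)<0\}$ and $\{S(n_2)<0\}$ to run Chebyshev on $Z$---is exactly what you call ``the main obstacle'', and you do not resolve it. Your alternative via Harper's almost-sure $\Omega$-result is a reasonable direction, but transferring that machinery uniformly in the conditioning parameter $y$ is, as you say, nontrivial and is not carried out here. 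In short: the paper leaves this open, and your sketch, while pointing in a plausible direction, leaves it open too.
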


Recently Conjecture \ref{conj1} was proved by Angelo and Xu \cite{angelo2026oscillations}. 
Moreover, they proved that (\ref{L_x^+ condition}) holds
for $y = o \left( \left( \frac{\log x}{\log_2 x} \right)^2 \right)$. Hence the bound on $y$ in Theorem \ref{thm sum f(n)} is tight up to a power of $\log_2 x$.

Since 
\[
\Prob\left(f \in \mathcal{L}_x^+\right) \ge \Prob(f(p) = 1 \, (p \le y))
\Prob\left(f \in \mathcal{L}_x^+ \mid f(p) = 1 \, (p \le y)\right) = 2^{-\pi(y)} (1 - o(1)),
\]
Theorem \ref{thm sum f(n)} gives us the following corollary. 
\begin{cor} \label{immediate cor}
\[
\Prob\left(f \in \mathcal{L}_x^+\right)  \ge \exp \left( - C' \frac{(\log x)^2}{\log_3 x} \right).
\]
\end{cor}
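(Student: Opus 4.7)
The plan is to apply the chain of inequalities already indicated just before the statement of Corollary~\ref{immediate cor}, and then estimate $2^{-\pi(y)}$ for the choice of $y$ allowed by Theorem~\ref{thm sum f(n)}. Since the events $\{f(p) = 1\}$ for distinct primes $p$ are independent with probability $1/2$ each, the unconditional probability that $f(p) = 1$ for all $p \le y$ equals $2^{-\pi(y)}$, and by the law of total probability
\[
\Prob(f \in \mathcal{L}_x^+) \ge \Prob(f(p) = 1 \text{ for all } p \le y) \cdot \Prob(f \in \mathcal{L}_x^+ \mid f(p) = 1 \text{ for all } p \le y).
\]
Taking $y = C \frac{(\log x)^2 \log_2 x}{\log_3 x}$ with the constant $C$ from Theorem~\ref{thm sum f(n)}, the conditional probability on the right is $1 - o(1)$.

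It remains to show $2^{-\pi(y)} \ge \exp(-C' (\log x)^2 / \log_3 x)$ for some $C'$. By the prime number theorem, $\pi(y) \le 2 y / \log y$ for $y$ large. For our choice of $y$,
\[
\log y = 2 \log_2 x + \log_3 x - \log_4 x + O(1) = (2 + o(1)) \log_2 x,
\]
so
\[
\pi(y) \le \frac{2 y}{\log y} = \frac{2 C (\log x)^2 \log_2 x / \log_3 x}{(2 + o(1)) \log_2 x} = (1 + o(1)) \frac{C (\log x)^2}{\log_3 x}.
\]
Therefore $2^{-\pi(y)} \ge \exp\bigl(-C' (\log x)^2 / \log_3 x\bigr)$ with $C' = 2 C \log 2$ (say), once $x$ is large enough to absorb the $o(1)$ and the factor $1-o(1)$ from Theorem~\ref{thm sum f(n)} into the constant.

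There is no real obstacle here: the corollary is a direct combination of Theorem~\ref{thm sum f(n)} with the trivial prior $\Prob(f(p) = 1 \text{ for all } p \le y) = 2^{-\pi(y)}$, the only quantitative step being a standard application of the prime number theorem to convert $y = (\log x)^{2 + o(1)}$ into $\pi(y) = O((\log x)^2 / \log_3 x)$. The implied constant $C'$ depends linearly on the constant $C$ supplied by Theorem~\ref{thm sum f(n)}.
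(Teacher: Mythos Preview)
Your argument is correct and is exactly the approach the paper indicates: the paper merely records the inequality $\Prob(f \in \mathcal{L}_x^+) \ge 2^{-\pi(y)}(1-o(1))$ and leaves the estimation of $\pi(y)$ implicit, while you have supplied the routine prime-number-theorem computation showing $\pi(y) \ll (\log x)^2/\log_3 x$ for $y = C(\log x)^2\log_2 x/\log_3 x$.
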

The upper bound $\Prob \left(f \in \mathcal{L}_x^+\right) \ll (\log x)^{-c + o(1)}$ was proved in \cite{kalmynin2023quadratic}. It is plausible that  Corollary \ref{immediate cor} can be substantially improved, since we used a very special construction to detect $f$ in $\mathcal{L}_x^{+}$.

\subsection{Partial sums of $\frac{f(n)}{n}$}

Now let us ask:
What is the probability that the  sums $\sum_{n \le y} \frac{\chi_p(n)}{n}$ are positive for all $y \ge 1$? This problem seems to be much more approachable than the problem about $\mathcal{L}^+$. First of all, $\sum_{n} \frac{\chi_p(n)}{n}$ converges to $L(1, \chi_p)$, which is positive due to Dirichlet’s class number formula. This shows that the partial sums $\sum_{n \le y} \frac{\chi_p(n)}{n}$ are all strictly positive from a certain point. 
Second, the values of $\chi_p(q)$ for large primes $q$ have an insignificant influence on the size of $\sum_{n \le y} \frac{\chi_p(n)}{n}$. Hence, it is easier to prove that a random function $f \in \mathcal{F}$ is a good model for $\chi_p$ in this problem.

Let us start by discussing the analogous problem for $f \in \mathcal{F}$. First, what can be said about an arbitrary fixed $f \in \mathcal{F}$?
The question of how negative the sum $\sum_{n \le x} \frac{f(n)}{n}$ can be was discussed by Granville and Soundararajan \cite{granvillenegative}. They showed among other things that for $x$ sufficiently large $\sum_{n \le x} \frac{f(n)}{n} \ge -(\log \log x)^{-3/5}$ and constructed $f$ such that $\sum_{n \le x} \frac{f(n)}{n} < - \frac{c}{\log x}$.
Kerr and Klurman \cite{kerr2022negative} proved that $\sum_{n \le x} \frac{f(n)}{n} \ge -(\log \log x)^{-1 + o(1)}$. Recently it was proved by Klurman and Mangerel \cite{KM} that $\sum_{n \le x} \frac{f(n)}{n} \ge -\frac{c}{(\log x)^{1 - 2/\pi}}$.
Of course these results can be applied to $f(n) = \chi_p(n)$.

Now let us denote by $P$ the probability that 
\[
\sum_{n \le y} \frac{f(n)}{n} > 0
\]
for every $y \ge 1$. Angelo and Xu \cite{angelo2023turan} proved that $1 - 10^{-45} < P$.
Let $\lambda(n)$ be the Liouville function.
Borwein, Ferguson,
and Mossinghoff \cite{borwein2008sign} showed that the minimal $N_0$ such that
\[
\sum_{n \le N_0} \frac{\lambda(n)}{n} < 0
\]
is $N_0 = 72, 185, 376, 951, 205$. Hence
\[
P \le 1 - 2^{-\pi(N_0)} < 1 - 10^{-704 \times 10^9}.
\]

Let us denote by $P_x$ the probability that
\[
\sum_{n \le x} \frac{f(n)}{n} < 0.
\]
It turns out that $P_x$ tends to $0$ very rapidly.  

Angelo and Xu proved \cite[Theorem 1.2]{angelo2023turan} that 
\[
P_x \ll \exp \left( - \exp \left( \frac{\log x}{C \log_2 x} \right) \right).
\]

Kerr and Klurman \cite[Theorem 1.2]{kerr2022negative} improved this to
\begin{equation} \label{Kerr-Klurman bound2}
P_x \ll \exp \left( - \exp \left( \frac{\log x \log_3 x}{C \log_2 x} \right) \right),
\end{equation}
for some constant $C$. Although the authors do not state it explicitly, one can derive from the proof that $C = 1 + o(1)$ is admissible.

\begin{theorem} \label{Kerr and Klurman improvement}
\[
P_x \ll \exp \left( - \exp \left( \frac{\log x \, \log_4 x}{(1 + o(1)) \log_3 x} \right) \right),
\]
as $x \to +\infty$.
\end{theorem}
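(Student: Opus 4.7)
The plan is to follow the Kerr--Klurman Euler-product truncation strategy and sharpen its central moment estimate. Fix a parameter $y = y(x)$, to be optimized at the end. Writing each integer $n \le x$ as a product of its $y$-smooth and $y$-rough parts, I decompose
\[
\sum_{n \le x} \frac{f(n)}{n} = L_y(f) - T(f) + R(f),
\]
where $L_y(f) := \prod_{p \le y}(1 - f(p)/p)^{-1}$ is the truncated Euler product (always strictly positive, as each factor is), $T(f) := \sum_{n > x,\, P^+(n) \le y} f(n)/n$ is the tail of the smooth sum, and $R(f) := \sum_{n \le x,\, P^+(n) > y} f(n)/n$ is the contribution of integers with a prime factor exceeding $y$. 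Since $L_y(f) > 0$ deterministically, the event $\{\sum_{n \le x} f(n)/n < 0\}$ forces $L_y(f) \le |T(f)| + |R(f)|$, so by a union bound, for any threshold $u > 0$,
\[
P'_x \le \Prob(L_y(f) \le 2u) + \Prob(|T(f)| > u) + \Prob(|R(f)| > u).
\]

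The first term I would control via Bernstein's inequality: since $\log L_y(f) = \sum_{p \le y} f(p)/p + O(1)$ is a sum of independent bounded mean-zero variables with total variance $\sum_{p \le y} 1/p^2 = O(1)$, one has $\Prob(L_y(f) \le e^{-v}) \le \exp(-cv^2)$ in the relevant range of $v$. The smooth tail $T(f)$ is bounded in mean-square by standard estimates on the count $\Psi(x,y)$ of $y$-smooth numbers up to $x$ and is negligible provided $y$ is chosen well below $x$. The substantive task is bounding $\Prob(|R(f)| > u)$: using the orthogonality relation $\mathbb{E}[f(n_1) f(n_2)] = \mathbf{1}[n_1 n_2 \text{ is a perfect square}]$, I would compute the $(2k)$-th moment $\mathbb{E}|R(f)|^{2k}$ as a sum over $(2k)$-tuples of $y$-rough integers $\le x$ whose product is a square, and bound it by a dyadic decomposition of the rough primes into ranges $(y, ey], (ey, e^2 y], \ldots$, applying Khintchine-type inequalities in each range and combining via Cauchy--Schwarz. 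The primes in $(\sqrt{x}, x]$ are treated separately, since they appear at most to the first power in any $n \le x$. Optimizing the resulting bound in $y$, $k$, and $u$ should yield the claimed exponent.

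The main obstacle is obtaining a sharp enough moment bound on $R(f)$. A straightforward Khintchine estimate loses a factor of roughly $(\log y)^k$, which absorbs the desired gain over the Kerr--Klurman bound. Avoiding this loss requires carrying the dyadic Cauchy--Schwarz argument one additional level deeper than before, exploiting the fact that primes just above $y$ carry most of the variance while being counted with multiplicity at most $\log x/\log y$ in each $n \le x$; this extra iteration is what is ultimately responsible for replacing the $\log_3 x/\log_2 x$ factor in the Kerr--Klurman inner exponent by $\log_4 x/\log_3 x$.
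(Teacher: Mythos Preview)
Your decomposition $\sum_{n\le x} f(n)/n = L_y(f) - T(f) + R(f)$ and the subsequent union bound is essentially the Angelo--Xu/Kerr--Klurman strategy, and the paper explains in its Section~3.1 why this route is stuck at the inner exponent $\frac{\log x\,\log_3 x}{\log_2 x}$: once you treat the positive main term and the error terms \emph{independently}, the bottleneck is the high-moment bound for the error, and Proposition~1 of the paper (which is the state of the art for such moments and is not improved there) only supports $q\le\exp\bigl(\frac{\log x\,\log_3 x}{(1+o(1))\log_2 x}\bigr)$. Your claim that ``carrying the dyadic Cauchy--Schwarz argument one additional level deeper'' bridges the gap from $\log_3 x/\log_2 x$ to $\log_4 x/\log_3 x$ is not a recognisable mechanism; nothing in your sketch indicates how an extra iteration would buy precisely that saving, and in fact the paper achieves the improvement \emph{without} any refinement of the moment inequality for $R(f)$.

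What the paper actually does is abandon the union bound altogether. It starts from the identity $\sum_{n\le x}\frac{f(n)}{n}=\frac{1}{x}\sum_{n\le x}g(n)+\frac{1}{x}\sum_{n\le x}f(n)\{x/n\}$ with $g=f*1$, shows (Proposition~2) that $\sum_{n\le x}g(n)\ge c\,x\exp\bigl(\sum_{p\le x}f(p)/p\bigr)$ with probability $1-O(e^{-x^\beta})$, and then bounds the moment of the \emph{normalised} error
\[
S=\E\Biggl[\Bigl(x^{-1}\exp\Bigl(-\sum_{p\le x}\tfrac{f(p)}{p}\Bigr)\sum_{n\le x}f(n)\{x/n\}\Bigr)^q\Biggr]^{1/q}.
\]
The point is that the main term and the error are \emph{strongly dependent}: via Hal\'asz's theorem for smooth-supported functions (Proposition~3) the smooth part of the error satisfies $\Psi_f(t,y)\ll t\,\rho(u)\exp\bigl(\sum_{p\le x}f(p)/p\bigr)$, so the normalising factor cancels it pointwise. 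This cancellation, together with an iteration of the Buchstab-type identity (Proposition~1, extending the bound on $\Psi_f$ to larger ranges), is what permits the choice $\varepsilon=\frac{\log_4 x}{(1+o(1))\log_3 x}$ and hence $q\asymp\exp\bigl(\frac{\log x\,\log_4 x}{(1+o(1))\log_3 x}\bigr)$. Your proposal is missing this normalisation idea entirely.
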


This theorem can be improved if the following conjecture is true.

\begin{hypo} \label{smooth Halasz conj}
There exists $\varepsilon > 0$ such that for all $x \ge 3$ and all real-valued completely multiplicative functions $f$ such that $|f(n)| \le 1$ for all $n$ and $f(p) = 0$ for all $p > x^{\varepsilon}$ we have
\[
\sum_{n \le x} f(n) \ll \frac{x}{\log \log x} \exp \left( \sum_{p \le x} \frac{f(p)}{p} \right).
\]
\end{hypo}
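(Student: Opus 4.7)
The strategy is to work in the pretentious Granville--Soundararajan framework for Halász's theorem, exploiting the crucial support hypothesis that $f(p) = 0$ for $p > x^{\varepsilon}$. Writing $S(x) = \sum_{n \le x} f(n)$, the starting point is the convolution identity
\[
S(x)\log x = \int_1^x \frac{S(t)}{t}\, dt + \sum_{p^k \le x^{\varepsilon}} f(p^k) \log p \cdot S(x/p^k),
\]
obtained from $\log n = \sum_{d \mid n}\Lambda(d)$ together with complete multiplicativity; the support hypothesis on $f$ is what truncates the prime-power sum at $x^{\varepsilon}$. This is essentially Hildebrand's identity for sums of multiplicative functions restricted to smooth numbers.

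As a base input I would invoke classical Halász in Tenenbaum--Granville--Soundararajan form, giving $|S(x)| \ll x(\log x)\exp(-\mathbb{D}(f,1;x)^2)$, where $\mathbb{D}$ is the pretentious distance. For real-valued $f$ one has $\mathbb{D}(f,1;x)^2 = \sum_{p \le x}(1 - f(p))/p + O(1)$, so $\exp(-\mathbb{D}^2) \asymp (\log x)^{-1}\exp\bigl(\sum_{p \le x} f(p)/p\bigr)$. In other words, classical Halász already delivers the correct shape up to a factor of $\log x$, and the conjecture amounts to replacing that $\log x$ by $1/\log\log x$ in the smooth-supported regime.

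The iteration step uses the displayed identity to trade the factor of $\log x$ for a convolution of $f(p^k)\log p$ against $S$, which by Cauchy--Schwarz is controlled in terms of the $L^2$ mean on the critical line of the short Dirichlet polynomial $\sum_{p \le x^{\varepsilon}} f(p)\log p / p^s$. Because every prime appearing is at most $x^{\varepsilon}$, each application contracts the logarithmic scale by a factor of $\varepsilon$, so $O(1/\varepsilon)$ bootstraps suffice to reach the trivial range. Keeping track of the savings at each step is then a Halász--Montgomery mean-value estimate applied to a smooth-supported Dirichlet polynomial, combined with a Selberg--Delange--style local analysis of the Euler product near $s = 1$.

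The main obstacle is extracting precisely the $(\log\log x)^{-1}$ gain rather than a weaker power $(\log\log x)^{-c}$. Heuristically, the factor should come from the fact that a typical $x^{\varepsilon}$-smooth number $n \le x$ has $\asymp \log\log x$ distinct prime factors (Erdős--Kac for smooth numbers), which in the convolution identity manifests as division by a quantity of typical size $\log\log x$. Making this rigorous requires a sharp analysis of the Mellin transform $F(s) = \sum_n f(n)n^{-s}$ near $s = 1$, pinpointing the contribution of the singularity of order $\sum_{p \le x^{\varepsilon}} f(p)/p$ and ruling out any extremal family of real multiplicative $f$ whose partial sums saturate the standard Halász bound without the extra smooth-number averaging. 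I expect this is where current techniques fall short, consistent with the statement being formulated as a conjecture rather than a theorem.
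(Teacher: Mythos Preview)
The statement is a \emph{conjecture} in the paper (labelled as \texttt{hypo}), and the paper does not attempt to prove it; it is used only as a conditional hypothesis in the deduction of Theorem~\ref{cor from conj}. So there is no ``paper's own proof'' to compare against. You clearly recognise this yourself, since your proposal ends by saying you expect current techniques to fall short, ``consistent with the statement being formulated as a conjecture rather than a theorem.''

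What you have written is therefore not a proof but a plausible-sounding strategy sketch together with an honest admission that the key step is missing. That is an accurate assessment of the situation. The paper's own commentary on the conjecture is complementary to yours: rather than outlining an approach, the author notes that Proposition~\ref{sum f(n) upper bound in terms of sum f(p)/p} (a consequence of the Granville--Harper--Soundararajan version of Hal\'asz for smooth-supported functions) gives the bound without the $(\log\log x)^{-1}$ saving, and observes via the example $f(p)=-1$ for $p\le x^{0.99}$ (together with Alladi's asymptotic) that this particular extremal candidate does \emph{not} violate the conjecture. Your heuristic about the typical number of prime factors of a smooth integer is reasonable motivation for the $(\log\log x)^{-1}$ factor, but neither you nor the paper turns it into an argument; the difficulty you identify---extracting exactly $(\log\log x)^{-1}$ rather than a weaker saving from the iteration---is precisely the open point.
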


\begin{theorem} \label{cor from conj}
    If Conjecture \ref{smooth Halasz conj} is true, then there exists $\alpha > 0$ such that 
    \[
    P_x \ll \exp(-x^{\alpha}).
    \]
\end{theorem}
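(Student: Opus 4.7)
My plan is a smooth/rough decomposition combined with Walsh hypercontractivity. Fix $\varepsilon > 0$ as in Conjecture~\ref{smooth Halasz conj} and set $y_0 = x^\varepsilon$. Every $n \le x$ factors uniquely as $n = ab$ with $a$ being $y_0$-smooth and $b$ being $y_0$-rough. Defining $L^\flat = \prod_{p \le y_0}(1 - f(p)/p)^{-1} > 0$, $S = \sum_{b\ \text{rough},\, b \le x} f(b)/b$, and $T(y) = \sum_{a\ \text{smooth},\, a \le y} f(a)/a$, one rearranges
\[
\sum_{n \le x} \frac{f(n)}{n} = L^\flat S - E,\qquad E = \sum_{b\ \text{rough},\, b\le x} \frac{f(b)}{b}\bigl(L^\flat - T(x/b)\bigr).
\]
Note $L^\flat$ depends only on $\{f(p): p \le y_0\}$; by Mertens and the bound $|\sigma^\flat| \le \log\log x + O(1)$ for $\sigma^\flat := \sum_{p \le y_0} f(p)/p$, we have $L^\flat \gg (\log x)^{-1}$ unconditionally; and any rough $b \le x$ has at most $1/\varepsilon$ distinct prime factors.

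The Conjecture applied to the smooth-supported completely multiplicative function $f \cdot \mathbf{1}[\,\cdot\ \text{is}\ y_0\text{-smooth}\,]$ gives $|M^\flat(t)| \ll t\exp(\sigma^\flat)/\log\log t$, where $M^\flat(t) = \sum_{a \le t,\text{smooth}} f(a)$. Coupling this with the Dickman bound $\Psi(t, y_0) \le t\rho(\log t/\log y_0)$ in the regime where the Conjecture ceases to be the sharper estimate, an Abel-summation calculation yields, uniformly in $b$,
\[
|L^\flat - T(x/b)| \ll L^\flat \cdot \frac{\log x}{\log_3 x}.
\]
The decisive feature, which no standard Halász-type bound provides, is the factor $\exp(\sigma^\flat) \asymp L^\flat$ on the right-hand side: the trivial $L^\infty$ estimate $|L^\flat - T(x/b)| \ll \log x$ is absolute and breaks the argument whenever $L^\flat$ has order $(\log x)^{-1}$, an event of probability at least $\exp(-(\log\log x)^2/C)$, far larger than the target $\exp(-x^\alpha)$.

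Conditioning on the smooth values of $f$, both $S$ and $E$ become Walsh polynomials of degree at most $1/\varepsilon$ in the i.i.d.\ signs $\{f(p): p > y_0\}$. A variance computation using the above uniform bound yields $\|S - 1\|_2^2 \ll y_0^{-1}$ and $\|E - \E[E\mid\text{smooth}]\|_2^2 \ll L^{\flat 2}(\log x/\log_3 x)^2/y_0$. Bonami--Beckner hypercontractivity, exploiting the bounded Walsh degree, then produces stretched-exponential concentration:
\[
\Prob(S < 1/2) + \Prob\bigl(|E - \E[E\mid\text{smooth}]| > L^\flat/8\bigr) \ll \exp(-x^\alpha)
\]
for any $\alpha < \varepsilon^2$. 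Combined with control of $\E[E\mid\text{smooth}]$ discussed below, this yields $\sum_{n \le x} f(n)/n = L^\flat S - E \ge L^\flat/4 > 0$ off an event of probability $\ll \exp(-x^\alpha)$.

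The main obstacle is controlling $\E[E\mid\text{smooth}]$, whose dominant contribution is the deterministic $b = 1$ term $\Delta_1 := L^\flat - T(x)$. The Conjecture-based bound $|\Delta_1| \ll L^\flat\log x/\log_3 x$ still exceeds $L^\flat/8$, so one must harvest cancellation from the randomness in the smooth primes. I expect this via an analogous hypercontractivity argument applied in the smooth variables: splitting $\Delta_1$ by Walsh degree and invoking the Conjecture to uniformly control the moments of each piece, one should obtain stretched-exponential concentration of $\Delta_1$ around its much smaller typical scale ($\sim x^{-1/2}$). The Conjecture once again plays the decisive role by pinning all the relevant errors to the natural scale $\exp(\sigma^\flat) \asymp L^\flat$ rather than to the absolute scale $\log x$; making this step rigorous is the technical heart of the proof.
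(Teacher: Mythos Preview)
Your decomposition $\sum_{n\le x}f(n)/n=L^\flat S-E$ is different from the paper's, which instead uses the identity
\[
\sum_{n\le x}\frac{f(n)}{n}=\frac1x\sum_{n\le x}g(n)+\frac1x\sum_{n\le x}f(n)\Bigl\{\frac xn\Bigr\},\qquad g=f*1.
\]
The crucial feature of the paper's identity is that the main term $\frac1x\sum_{n\le x}g(n)$ is \emph{deterministically} nonnegative, and Proposition~\ref{g(n) final lower bound} (via a sieve argument of Matom\"aki--Shao type) shows it exceeds $c\exp\bigl(\sum_{p\le x}f(p)/p\bigr)$ off an event of probability $O(\exp(-x^\beta))$. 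The Conjecture is then used only to bound a \emph{deterministic supremum} over all $f$ of the ``smooth tail'' error $S_2$: with the extra $1/\log\log x$ from the Conjecture, one can keep $\varepsilon$ a fixed constant and take $q\gg x^{\alpha}$ in the moment argument.

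Your route has a genuine gap at exactly the point you flag. After conditioning on the smooth primes and taking the rough expectation, your identity collapses to
\[
\E\bigl[L^\flat S-E\ \big|\ \text{smooth}\bigr]=L^\flat-\Delta_1+O\bigl(L^\flat x^{-\varepsilon}\bigr)=T(x)+O\bigl(L^\flat x^{-\varepsilon}\bigr),
\]
so what you ultimately need is $\Prob\bigl(T(x)\le L^\flat/4\bigr)\ll\exp(-x^\alpha)$. But $T(x)=\sum_{a\le x,\,P(a)\le y_0}f(a)/a$ is precisely $\sum_{n\le x}f(n)/n$ for a $y_0$-smooth-supported $f$: you have reduced the theorem to itself in the smooth case. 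The Conjecture only yields the deterministic bound $|\Delta_1|\ll L^\flat\cdot(\varepsilon\log x)/\log\log x$ (via Abel summation and Proposition~\ref{Psi_f proposition}; note the denominator is $\log_2 x$, not $\log_3 x$), which is far larger than $L^\flat$, so it gives no pointwise control.

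Your suggested fix---hypercontractivity in the smooth variables---cannot work as stated: $\Delta_1$ is a Walsh polynomial in the $\pi(y_0)\sim x^\varepsilon/\log x$ smooth signs with \emph{unbounded} degree (the monomials $f(a)/a$ for $a>x$ smooth have arbitrarily large $\omega(a)$), so Bonami--Beckner gives no stretched-exponential tail of strength $\exp(-x^\alpha)$. Any moment bound you derive here from the Conjecture would have to compete with the entropy of $\sim x^\varepsilon$ independent signs, and nothing in your outline explains how. The paper sidesteps this entirely: its main term $\sum g(n)$ never needs concentration, only the sieve-based lower bound of Proposition~\ref{g(n) final lower bound}.
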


Recently, the bound $P_x \ll \exp(-x^{\alpha})$ was established unconditionally by Klurman and Mangerel \cite[Theorem 2.3]{KM}, by a different method (by using a Halász-type asymptotic formula for logarithmic means). Their argument does not settle Conjecture \ref{smooth Halasz conj}, which remains open.

We now comment on Conjecture \ref{smooth Halasz conj}. Proposition \ref{sum f(n) upper bound in terms of sum f(p)/p} is a weaker result that we use instead of Conjecture \ref{smooth Halasz conj} to prove Theorem \ref{Kerr and Klurman improvement}. To prove Proposition \ref{sum f(n) upper bound in terms of sum f(p)/p}, we use a version of Halász's theorem for multiplicative functions with support on smooth numbers, which was proved by Granville, Harper, and Soundararajan in \cite{granville2018more} (see Lemma \ref{Halász}).
If we take $f(p) = -1$ for $p < x/2$, and $f(p) = 0$ otherwise, then 
\[
\sum_{n \le x} f(n) \asymp \frac{x}{\log x} \asymp x \exp\left( \sum_{p \le x} \frac{f(p)}{p} \right).
\]
Hence the condition that $f$ is supported on $x^{\varepsilon}$-smooth numbers cannot be dropped. 

We have no strong evidence for Conjecture \ref{smooth Halasz conj} and would not be surprised by a counterexample. We pose it because it implies $P_x \ll \exp(-x^{\alpha})$ (now also established unconditionally \cite{KM}) and because its failure would show that Halász's theorem is tight at this scale.

An application of Halász's theorem similar to the one in the proof of Proposition \ref{sum f(n) upper bound in terms of sum f(p)/p} implies that a counterexample $f = f_x$, if it exists, must satisfy $\sum_{p \le x} \frac{f(p)}{p} < (-1 + \delta) \log \log x$ for every $\delta > 0$ and all sufficiently large $x$. 

The naive attempt to construct a counterexample is to let $f(p) = -1$ for $p \le x^{0.99}$ and $f(p) = 0$ otherwise. But then, as follows from the result of Alladi
\cite[Theorem 2]{alladi1982asymptotic},
\[
\sum_{n \le x} f(n) \ll \frac{x}{(\log x)^2}.
\]
 Therefore, this does not produce a counterexample to Conjecture \ref{smooth Halasz conj}.

Now let us return to the problem about $\chi_p$.
Denote by $\tilde{P}_x$ the probability that for all $y \ge 1$
\[
\sum_{n \le y} \frac{\chi_p(n)}{n} > 0.
\]

\begin{theorem} \label{chi(n)/n}
    Let $p$ be a random prime in $(x, 2x]$ chosen uniformly.
    Let $A$ be the event in $\mathcal{F}$ that partial sums of the sequence $\frac{f(n)}{n}$ are all positive. Let $k = 8 \prod_{2 < q \le c_1 \sqrt{\log x}} q$. Let $E_0 = 0$ if no character $\pmod{k}$ has a Siegel zero. If there is a character $\chi_1 \pmod{k}$ with Siegel zero $\beta_1$, then $\chi_1$ can be written in the form $\chi_1(n) = \left( \frac{d}{n} \right)$, where $d | k$. In this case, we set $E_0 = 0$ if $d < 0$ and $E_0 = 1$ otherwise.
    
    Then
    \[
    \tilde{P}_x = P - E_0 \Cov \left( \mathds{1}_{A}, f(d) \right) \frac{\int_x^{2x} \frac{u^{\beta_1 - 1}}{\log u} \, du}{\operatorname{Li}(2x) - \operatorname{Li}(x)} + O \left(  \exp \left( - \exp \left( \frac{\log_2 x \log_4 x}{(2 + o(1)) \log_3 x} \right) \right) \right).
    \]
\end{theorem}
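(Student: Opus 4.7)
The plan is to couple $\chi_p$ with a random completely multiplicative $f$ by forcing the two functions to agree on every prime $q\le z:=c_1\sqrt{\log x}$. Quadratic reciprocity (with $8\mid k$ absorbing the factors from $p\pmod 4$ and $p\pmod 8$) makes each $\chi_p(q)$ with $q\le z$ a function of $p\pmod k$, so under uniform sampling of $p\in(x,2x]$ one can define $f(q):=\chi_p(q)$ for $q\le z$ and take $f(q)$ independent uniform $\pm1$ for $q>z$. With this coupling $\tilde P_x$ equals the probability of the event $\tilde A$ that all partial sums of $\chi_p(n)/n$ are positive, and one wants to compare $\tilde P_x$ with $\Prob(A)=P$.

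Two estimates drive the proof. The first is a Kerr--Klurman-type tail bound showing that substituting iid random $\pm1$ for $\chi_p(q)$, $q>z$, shifts the positivity probability by at most the announced error. Since $\chi_p$ and $f$ agree on all $z$-smooth integers, only the non-$z$-smooth part of $\sum_{n\le y}\chi_p(n)/n$ differs, and the problem reduces to showing that neither of the rough sums $\sum_{n\le y,\,p^-(n)>z}\chi_p(n)/n$ and $\sum_{n\le y,\,p^-(n)>z}f(n)/n$ can flip the sign of the total partial sum except on an event of the stated double-exponentially small probability. The relevant bound is an adaptation of the machinery used for Theorem~\ref{Kerr and Klurman improvement}, and the constraint $z\ll\sqrt{\log x}$ (required so that Siegel--Walfisz is effective at modulus $k\le\exp(c\sqrt{\log x})$) is what forces the factor $(2+o(1))$ in the logarithmic exponent of the error.

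The second estimate is the explicit formula for primes in arithmetic progressions: for any primitive residue $a\pmod k$,
$$\Prob(p\equiv a\pmod k)=\frac{1}{\phi(k)}-\frac{\chi_1(a)}{\phi(k)}\cdot\frac{\int_x^{2x}u^{\beta_1-1}/\log u\,du}{\operatorname{Li}(2x)-\operatorname{Li}(x)}+O(e^{-c\sqrt{\log x}}),$$
where the middle contribution is present only when a real character $\chi_1=(d/\cdot)\pmod k$ admits a Siegel zero $\beta_1$. Summed over $a$, the leading $1/\phi(k)$ reproduces $P$ because the induced law on $(f(q))_{q\le z}$ is unbiased Bernoulli. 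For the Siegel-zero term one uses the factorisation $\chi_1(p)=(d/p)=\prod_{q\mid d}(q/p)=f(d)$, valid under the coupling whenever $d>0$ and $d\mid k$; the inner sum then collapses to $\phi(k)\,\E[\mathds 1_A f(d)]=\phi(k)\,\Cov(\mathds 1_A,f(d))$, using $\E f(d)=0$ for $d>1$, which yields exactly the stated correction. When $d<0$ one instead has $\chi_1(p)=\chi_{-4}(p)f(|d|)$, and $\chi_{-4}(p)$ is independent of every $f(q)=(q/p)$ for odd $q\le z$ (by CRT on $p\pmod 4$ and $p\pmod q$) as well as of $f(2)=(2/p)$ (a direct check on $p\pmod 8$); since $\E\chi_{-4}(p)=0$ the correction averages to zero, which is precisely the convention $E_0=0$.

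In my view the main obstacle is the first estimate. Quantifying that the substitution of iid $\pm1$ for $\chi_p(q)$, $q>z$, preserves the positivity of all partial sums with probability better than $\exp(-\exp(\log_2 x\log_4 x/((2+o(1))\log_3 x)))$, uniformly in $y\ge1$, is a delicate tail computation for a multiplicative function that is \emph{frozen} on small primes and genuinely random on large primes; it requires adapting, rather than applying directly, the Halász/Kerr--Klurman technology underlying Theorem~\ref{Kerr and Klurman improvement}. The remaining ingredients---the explicit formula with Siegel-zero contribution, the identification $\chi_1(p)=f(d)$, and the covariance bookkeeping---are essentially standard once this coupling lemma is in place.
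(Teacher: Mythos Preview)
Your high-level strategy matches the paper's: truncate at $N=c_1\sqrt{\log x}$, use the explicit formula with a possible Siegel term to compare the law of $(\chi_p(q))_{q\le N}$ with iid coins (this produces the covariance correction), and push everything beyond $N$ into the error. Your identification $\chi_1(p)=f(d)$ for $d>0$ and your treatment of $d<0$ via independence of $(-1/p)$ from $((q/p))_{q\le N}$ are correct and amount to the paper's device of adjoining an auxiliary random sign $f(-1)$.

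Where your sketch diverges from the paper is in the mechanism you propose for the tail estimate. The paper does not couple and then track when ``rough sums'' $\sum_{p^-(n)>z}\cdot$ flip a sign; it is simpler to write $|\tilde P_x-\tilde P_x([1,N])|\le 1-\tilde P_x((N,\infty))$ and $|P-P([1,N])|\le 1-P((N,\infty))$ and bound the two tails separately, the latter directly by Theorem~\ref{Kerr and Klurman improvement}. More importantly, the bound on $1-\tilde P_x((N,\infty))$ is \emph{not} obtained by adapting the Hal\'asz/Kerr--Klurman machinery to a partially frozen multiplicative function. The paper instead proves $q$-th moment bounds for $\sum_{n\le y} b_n\chi_p(n)$ with $p$ averaged over $(x,2x]$ (Lemma~\ref{high moment inequality}): Elliott's method with quadratic reciprocity converts the prime average into a full average over integers $m\le 2x$; the diagonal contribution $n_1\cdots n_q=\square$ is exactly $\E\big[(\sum_{n\le y} f(n))^q\big]$ and is controlled by Proposition~\ref{high moment sum f(n) ineq}, while the off-diagonal is handled by P\'olya--Vinogradov. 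Combined with a second-moment bound (Lemma~\ref{Second moment ineq}) for the range $y>\exp(\sqrt{\log x})$ and Siegel's theorem for $y>x^{1/2+\varepsilon}$, this yields Lemma~\ref{P_x([N, infty)]) bound}. So the substantive input on the $\chi_p$ side is a character-sum moment estimate over random $p$, not a Hal\'asz-type argument, and that is the piece you would actually have to supply.
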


\begin{cor}\label{Cor chi(n)/n}
\[
\tilde{P}_x - P \ll \exp \left( - \exp \left( \frac{\log_3 x \log_6 x}{(1 + o(1)) \log_5 x} \right) \right).
\]
\end{cor}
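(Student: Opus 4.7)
The plan is to apply Theorem~\ref{chi(n)/n} directly and control the two resulting contributions to $\tilde{P}_x - P$ separately. The explicit error term $\exp(-\exp(\log_2 x \log_4 x/((2+o(1))\log_3 x)))$ from the theorem is comfortably smaller than the corollary's target bound, since $\log_2 x\log_4 x /\log_3 x$ sits one iterated-logarithm level higher than $\log_3 x\log_6 x /\log_5 x$. So this term is negligible, and only the Siegel-zero correction requires work.

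For the Siegel term, I will use $|\Cov(\mathds{1}_A, f(d))| \le 1$ together with the trivial estimate $\int_x^{2x} u^{\beta_1-1}/\log u\,du \le x^{\beta_1-1}(\operatorname{Li}(2x)-\operatorname{Li}(x))$, valid since $\beta_1 < 1$ makes the integrand decreasing in $u$. This reduces the problem to showing $x^{\beta_1-1} \le \exp(-\exp(\log_3 x \log_6 x/((1+o(1))\log_5 x)))$. Taking logarithms twice and using $\log_3 x \log_6 x/\log_5 x = o(\log_2 x)$, this is equivalent to the lower bound $1 - \beta_1 \gg (\log x)^{-1+o(1)}$.

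That lower bound is produced by Siegel's theorem. Chebyshev's estimate on the primorial gives $\log k = (c_1 + o(1))\sqrt{\log x}$, and Siegel's ineffective theorem applied to the real primitive character of conductor $d\mid k$ inducing $\chi_1$ yields $1 - \beta_1 \ge c(\varepsilon) k^{-\varepsilon}$ for every $\varepsilon > 0$. Choosing $\varepsilon = \varepsilon(x)$ so that $\varepsilon c_1 \sqrt{\log x} = (1-o(1))\log_2 x$, i.e.\ $\varepsilon \sim \log_2 x/(c_1 \sqrt{\log x})$, then produces the required inequality on $1-\beta_1$.

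The main obstacle is the ineffectivity of Siegel's constant $c(\varepsilon)$ as $\varepsilon \to 0$ with $x$. The cleanest way around it is to let the constant $c_1$ in the definition of $k$ decay slowly with $x$, say $c_1(x) = \log_2 x/\sqrt{\log x}$, so that $\log k = O(\log_2 x)$ and a single application of Siegel with a fixed small $\varepsilon$ already gives $1 - \beta_1 \gg (\log x)^{-\varepsilon}$, easily beating the target. One then has to briefly verify that the proof of Theorem~\ref{chi(n)/n} is robust to this choice of $c_1$, which is plausible because $c_1$ there governs only the size of the smooth primes on which one conditions.
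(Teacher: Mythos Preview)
Your handling of the explicit error term in Theorem~\ref{chi(n)/n} is fine; the problem is entirely in the Siegel contribution, and the route you propose does not close. With $k = 8\prod_{2<q\le c_1\sqrt{\log x}} q$ one has $\log k \asymp \sqrt{\log x}$, so for any fixed $\varepsilon>0$ Siegel's bound $1-\beta_1 \ge c(\varepsilon) d^{-\varepsilon} \ge c(\varepsilon)\exp(-\varepsilon c_1\sqrt{\log x})$ yields $(1-\beta_1)\log x \to 0$; letting $\varepsilon\to 0$ with $x$ destroys the constant, exactly as you note. Your proposed repair of shrinking $c_1$ so that $N \asymp \log_2 x$ does tame the Siegel factor, but it does not prove the corollary from the theorem as stated, and if you rerun the proof of Theorem~\ref{chi(n)/n} with that smaller $N$ the dominant error becomes $1-\tilde P_x((N,\infty))$, which by Lemma~\ref{P_x([N, infty)]) bound} (whose range you would also have to extend) is at best $\exp\bigl(-\exp(\tfrac{\log_3 x\,\log_5 x}{(1+o(1))\log_4 x})\bigr)$. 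Since $\log_5 x/\log_4 x = o(\log_6 x/\log_5 x)$, this is \emph{larger} than the target $\exp\bigl(-\exp(\tfrac{\log_3 x\,\log_6 x}{(1+o(1))\log_5 x})\bigr)$, so the modified theorem cannot deliver the corollary either.

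The paper avoids Siegel's theorem (and any estimate on $x^{\beta_1-1}$) altogether by bounding the covariance $\Cov(\mathds{1}_A,f(d))$ directly. From Lemma~\ref{pi(x; k, l)} one has $d\,\log^4 d \gg \log x$, so the largest prime factor $p_0$ of $d$ satisfies $p_0 \ge (1+o(1))\log d \ge (1+o(1))\log_2 x$. Setting $N'=p_0-1$, the random sign $f(p_0)$ is independent of the event $A_{N'}$ that all partial sums up to $N'$ are positive, whence $\Cov(\mathds{1}_{A_{N'}},f(d))=0$. Consequently
\[
|\Cov(\mathds{1}_A,f(d))| = |\Cov(\mathds{1}_A-\mathds{1}_{A_{N'}},f(d))| \le 1 - P((N',\infty)) \ll \exp\Bigl(-\exp\Bigl(\tfrac{\log_3 x\,\log_6 x}{(1+o(1))\log_5 x}\Bigr)\Bigr)
\]
by Theorem~\ref{Kerr and Klurman improvement} applied with cutoff $N'\gg \log_2 x$. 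The integral ratio is then bounded trivially by $1$. This argument works straight from Theorem~\ref{chi(n)/n} as written, needs no adjustment of $c_1$, and shows that the $\log_3 x\,\log_6 x/\log_5 x$ scale in the corollary is dictated precisely by the size of the largest prime in $d$.
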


\begin{rem}
By using the bound $P_x \ll \exp(-x^{\alpha})$ proved in \cite[Theorem 2.3]{KM} instead of Theorem \ref{Kerr and Klurman improvement}, one can sharpen Corollary \ref{Cor chi(n)/n} and obtain
\[
\tilde{P}_x - P \ll \exp(- (\log_2 x)^{\alpha}).
\]
\end{rem}

We studied in this paper the positivity of partial sums $\sum_{n \le y} f(n)$ and $\sum_{n \le y} \frac{f(n)}{n}$. The analogous problems for  $\sum_{n \le y} \frac{f(n)}{n^{\sigma}}$, where $0 < \sigma < 1$, were investigated in recent years. See, for example, \cite{aymone2025positivity}, \cite{aymone2024sign}; the latter includes a survey of results on random multiplicative functions.

\subsection{Notation}

Throughout, $\log$ denotes the natural logarithm, and for an integer $k \ge 1$ we write
\[
  \log_k x = \underbrace{\log \log \cdots \log}_{k} x
\]
for the $k$-fold iterated logarithm. We write $\operatorname{Li}(x) = \int_{2}^{x} \frac{dt}{\log t}$ for the logarithmic integral.
We write $\rho$ for the \emph{Dickman function}, i.e.\ the unique continuous function
on $[0, \infty)$ with $\rho(u) = 1$ for $0 \le u \le 1$ and
$u \rho'(u) + \rho(u - 1) = 0$ for $u > 1$.

For a nonnegative
function $g$, we write $f = O(g)$ or $f \ll g$ if $|f| \le C g$ for some constant
$C > 0$. We write
$f \asymp g$ when $f \ll g$ and $f \gg g$ hold simultaneously, $f \sim g$ when
$f/g \to 1$, and $f = o(g)$ when $f/g \to 0$. Implied constants are absolute unless
indicated otherwise. A subscript such as $O_{\varepsilon}(\cdot)$ or $\ll_{\varepsilon}$ means that the implied
constant may depend on the subscripted parameter.
We write $n = \square$ to mean that $n$ is a perfect square; thus a sum such as
$\sum_{ab = \square}$ is taken over all pairs with $ab$ a perfect square.
A flat on a summation sign, $\sum^{\flat}$, indicates that the sum is restricted to
square-free integers.

For random variables $X$ and $Y$, we write $\Prob(\cdot)$ and $\E(\cdot)$ for
probability and expectation, $\Cov(X, Y)$ for the covariance of $X$ and $Y$, and
$\mathds{1}_{A}$ for the indicator function of an event $A$.

$\left(\frac{m}{n} \right)$ denotes the Kronecker symbol.

For a positive integer $n$, let $P(n)$ and $p(n)$ denote its largest and smallest
prime divisors, with the conventions $P(1) = 1$ and $p(1) = +\infty$. For $x \ge 1$
and $y > 0$ we set
\[
  S(x, y) := \{ n \le x : P(n) \le y \}, \qquad
  R(x, y) := \{ n \le x : p(n) > y \},
\]
the $y$-smooth and $y$-rough integers up to $x$, and we let $\Psi(x, y) := |S(x, y)|$
denote the number of $y$-smooth integers up to $x$. Note that $1 \in R(x, y)$ for
every $x \ge 1$, and that $R(x, y) = \{ 1 \}$ whenever $y \ge x$. For a positive integer $n$, let $\omega(n)$ denote the number of distinct prime divisors of $n$.

Throughout, $c$ and $C$ denote positive absolute constants whose values may change
from one occurrence to the next. We attach subscripts $c_1, c_2, \dots$ and
$C_1, C_2, \dots$ to label specific constants where convenient.

\section{Outline of the proofs}

\subsection{Theorem \ref{thm sum f(n)}}
Let $f$ be a random completely multiplicative function taking values $\pm 1$ such that $f(p) = 1$ for all $p \le y$.

First, for an individual $z \in [y, x]$, we bound the probability that $\sum_{n \le z} f(n)$ is negative (Lemma \ref{sum f(n) bound lemma1}). 
The equality (\ref{sum f(n) Psi* eq}) provides a positive main part of $\sum_{n \le z} f(n)$ and the remainder random part, which we want to show to be small with high probability. The main part and the coefficients of the random part in (\ref{sum f(n) Psi* eq}) are written in terms of the $\Psi(t, y)$ function.

There are two key ingredients in the proof of Lemma \ref{sum f(n) bound lemma1}. 
The first ingredient is the de la Bret{\`e}che--Tenenbaum theorem (Lemma \ref{alpha ineq}), which allows us to compare $\Psi(z, y)$ with $\Psi(z/t, y)$. We note that the analogous result of Hildebrand and Tenenbaum \cite[Theorem 3]{hildebrand1986integers} requires the additional restriction $t \le y$, whereas Lemma \ref{alpha ineq} is uniform in $t$.
The second ingredient is the
Bonami--Halász inequality (Lemma \ref{Halasz lemma}), which  provides upper bounds on the moments of the remainder random part of $\sum_{n \le z} f(n)$.
Applying Markov's inequality $\Prob(|X| \ge a) \le a^{-q}\E[X^q]$ (with $q$ even), we bound the probability that the remainder term is large.

Having controlled $\sum_{n \le z} f(n)$ for a fixed $z$, we choose checkpoints $z_0 < z_1 < \ldots < z_k = x$ and apply Lemma \ref{sum f(n) bound lemma1} at each. We want $z_i$ to be as widely spaced as possible, while still controlling $\sum f(n)$ over each short interval $[z_i, z_{i+1}]$.

The trivial bound $\left|\sum_{z_{i} \le n \le u} f(n) \right| \le u - z_i + 1$ allows us to take $z_{i + 1} > z_{i} + z_i^{1/2 + o(1)}$. This gives the conclusion of Theorem \ref{thm sum f(n)} for the weaker range $y \ge (\log x)^{3 + o(1)}$. 
The same range, up to a constant multiple, is obtained by taking $z_{i + 1} = z_{i} + 1$, and we do this in Lemma \ref{first bound y}, proving that, with probability $1 - o(1)$, $\sum_{n \le z} f(n) \ge 0$ for all $z \le X$, where $(\log X)^3 \approx y$. 

To reach the stated range $y \ge (\log x)^{2 + o(1)}$, we need to take the checkpoints more sparsely. We take $x_0 = X$, $x_{i + 1} \approx x_i + \frac{x_i}{(\log x_i)^3}$.
We note that even if we could take $x_{i+1} = 2x_i$ and control the sum of $f(n)$ between these checkpoints, Lemma \ref{sum f(n) bound lemma1} would give us the same conclusion as Theorem \ref{thm sum f(n)}, only with a better constant $C$.

To control the sum of $f(n)$ on the short interval $[x_i, x_{i+1}]$, it suffices to control the sums over dyadic blocks
$(x_i + 2^k l, x_i + 2^k(l + 1)]$, for which we use the identity (\ref{f(n) 2^k interval sum}), analogous to (\ref{sum f(n) Psi* eq}).
The coefficients of $f(n)$ now count the $y$-smooth integers in unions of short intervals. To bound them, we use a triangle inequality for smooth numbers due to Hildebrand (Lemma \ref{triangle smooth}), namely $\Psi(x + z, y) - \Psi(x, y) \le \Psi(z, y) + 1$. 
Applying it to each interval bounds every coefficient by a sum of terms $\Psi(z, y)$ together with a sum of $1$'s. Accordingly we split the whole sum into two — one carrying the $\Psi(z, y)$ contributions, the other the $1$'s (see \eqref{two sequences div}) — and apply Markov's inequality and Bonami--Halász inequality, with moments of different orders in the two cases.

\subsection{Theorem \ref{Kerr and Klurman improvement}}

To provide an upper bound on $P_x$ Angelo and Xu \cite[Theorem 1.2]{angelo2023turan} 
used the identity
\begin{equation} \label{angelo_xu_identity}
    \sum_{n \le x} \frac{f(n)}{n} = \prod_{p \le x} \left( 1 - \frac{f(p)}{p} \right)^{-1} - \sum_{\substack{n > x \\ P(n) \le x}} \frac{f(n)}{n}.
\end{equation}
The first term on the right-hand side is obviously positive and can be proved to be relatively large with high probability. After that, one provides an upper bound on the absolute value of the second term which holds with high probability.

Let $g = f * 1$. Then $g$ is multiplicative and nonnegative. The following equation is the analog of (\ref{angelo_xu_identity}) and is the basis for the proof of Theorem \ref{Kerr and Klurman improvement} and Proposition \ref{high moment sum f(n) ineq}.
\begin{equation} \label{main elementary eq}
    \sum_{n \le x} \frac{f(n)}{n} = \frac{1}{x} \sum_{n \le x} g(n) + \frac{1}{x} \sum_{n \le x} f(n) \left\{ \frac{x}{n} \right\}.
\end{equation}

The proof of \cite[Theorem 1.2]{kerr2022negative} by Kerr and Klurman goes as follows. The authors show that the two probabilities 
\begin{equation}
P_{0} = \Prob\left( \sum_{n \le x} g(n) < \frac{cx}{\log x} \right), \qquad
P_1 = \Prob\left( \left|\sum_{n \le x} f(n) \left\{ \frac{x}{n} \right\} \right| \ge \frac{cx}{\log x} \right)
\end{equation}
are small. 

Note that $\sum_{n \le x} g(n) \ge \sum_{p \le x} g(p) = \sum_{p \le x} (1 + f(p))$ and the good upper bound on $P_0$ comes from a Chernoff-type bound.
To bound $P_1$ the authors use \cite[Proposition 5.6]{kerr2022negative}: a high-order moment inequality, with moment order as large as $\exp \left( \frac{\log x \log_3 x}{C \log_2 x} \right)$.

Let us state \cite[Proposition 5.6]{kerr2022negative} in a slightly generalized form.

\begin{prop} \label{high moment sum f(n) ineq}
Let $\beta_0 > 0$ and $\beta(x) = \beta_0 + o(1)$ as $x \to \infty$. Let $q$ be an even positive integer. Then there exists a function $o_{\beta}(1)$ such that
\[
\E \left[ \left( \sum_{n \le x} f(n) \right)^q \right]^{1/q} = o \left( \frac{x}{(\log x)^{\beta(x)}} \right)
\]
uniformly for
\[
q \le \exp \left( \frac{\log x \log_3 x}{(\beta_0 + o_{\beta}(1)) \log_2 x} \right).
\]
\end{prop}

We can hope to improve the result of \cite[Theorem 1.2]{kerr2022negative} by providing a better lower bound on $\sum_{n \le x} g(n)$, which still holds with high enough probability. If we use Proposition \ref{high moment sum f(n) ineq}, then we need a good upper bound on
\[
P_{\varepsilon} := \Prob \left( \sum_{n \le x} g(n) < \frac{x}{(\log x)^{1 - \varepsilon}} \right)
\]
for some $\varepsilon > 0$ to improve the constant $C = 1$ in (\ref{Kerr-Klurman bound2}).
Unfortunately, this approach does not work, as one can show that

\[
P_{\varepsilon} \gg \exp \left( - \exp \left( (\log x)^{1-\varepsilon} \right) \right).
\]

Let us note, however, that in view of (\ref{main elementary eq}) we only need
\begin{equation} \label{g > f}
\sum_{n \le x} g(n) \ge \left| \sum_{n \le x} f(n) \left\{ \frac{x}{n} \right\} \right|,
\end{equation}
to guarantee that $\sum_{n \le x} \frac{f(n)}{n} \ge 0$. The two sides of (\ref{g > f}) are strongly dependent. Exploiting this dependence is one of the main ideas in the proof of Theorem \ref{Kerr and Klurman improvement}.

To make it explicit, we provide a lower bound for $\sum_{n \le x} g(n)$, in terms of $f$, that holds with high probability (Proposition \ref{g(n) final lower bound}). 
Note that this lower bound $\sum_{n \le x} g(n) \gg x \exp\left( \sum_{p \le x} \frac{f(p)}{p} \right)$ is best possible up to a constant factor, since for all $f$ the upper bound 
\[
\sum_{n \le x} g(n) \ll \frac{x}{\log x} \prod_{p \le x} \left( 1 + \frac{g(p)}{p} + \frac{g(p^2)}{p^2} + \ldots \right) \asymp x \exp\left( \sum_{p \le x} \frac{f(p)}{p} \right)
\]
holds (see, for example, \cite{halberstam1979result}).

The problem reduces to showing that the moment $S$ in (\ref{S moment def}) is $o(1)$ with $q$ taken as large as possible. To bound $S$, we follow the proof of \cite[Proposition 5.6]{kerr2022negative}: we split $S \le S_1 + S_2$, where $S_1$ and $S_2$ are the moments of the sums over integers with large rough part and large smooth part, respectively. We bound $S_1$ by Rankin's trick, while bounding $S_2$ reduces to bounding $\exp \left( - \sum_{p \le x} \frac{f(p)}{p} \right) \Psi_f(x, y)$, where $\Psi_f(x, y) = \sum_{n \in S(x, y)} f(n)$. The trivial estimate 
\[
\exp \left( - \sum_{p \le x} \frac{f(p)}{p} \right) \Psi_f(x, y) \ll (\log x) \Psi(x, y)
\]
gives the Kerr and Klurman bound (\ref{Kerr-Klurman bound2}).

It turns out that if a bound of the form $|\Psi_f(x, y)| \le c_f(y) \Psi(x, y)$ holds in a range $y^a \le x \le y^{a+1}$, then it holds in the much larger range $x \ge y^a$, $\log y \ge (\log_2 x)^{5/3 + \varepsilon}$. The precise statement is Proposition \ref{Psi_f proposition}. The proof uses an induction argument similar to the one used by Hildebrand \cite[Theorem 1]{hildebrand1986number} to prove that $\Psi(x, y) \sim x \rho\left( \frac{\log x}{\log y} \right)$ in the range $\log y \ge (\log_2 x)^{5/3 + \varepsilon}$.

Hence we only need a nontrivial bound for $\Psi_f(x, y)$ in terms of $\sum_{p \le x} \frac{f(p)}{p}$ in the range $y^a \le x \le y^{a+1}$, and Proposition \ref{Psi_f proposition} then extends it to the larger range. A natural source of bounds of this type is Halász's theorem, which bounds partial sums of multiplicative functions in terms of the values of the corresponding $L$-function on the line $1 + it$. We use a version due to Granville, Harper, and Soundararajan \cite{granville2018more} (Lemma~\ref{Halász}), which is slightly sharper for sums of $f(n)$ over smooth numbers; this is exactly our setting, since we may take $a > 1$. Proposition~\ref{sum f(n) upper bound in terms of sum f(p)/p} is a corollary of Halász's theorem and bounds $\Psi_f(x, y)$ in terms of $\sum_{p \le x} \frac{f(p)}{p}$. Here it is essential that $f$ take real values.

\subsection{Theorem \ref{chi(n)/n}}
We give an upper bound on the probability that the partial sum $\sum_{n \le y} \frac{\chi_p(n)}{n}$ becomes negative for some $y \ge N$, where $N$ is a large fixed number (Lemma \ref{P_x([N, infty)]) bound}).

The series $\sum_{n=1}^{\infty} \frac{\chi_p(n)}{n}$ converges to $L(1, \chi_p) > 0$. Using a lower bound for $L(1, \chi_p)$, we show that the partial sum is nonnegative for all sufficiently large $y$. For the remaining $y > N$ we use the identity (\ref{main elementary eq}) to extract the positive main term, and the moment inequality then provides the bound. Here we use the classical second-moment bound of Elliott (Lemma \ref{Second moment ineq}) together with a high-moment bound (Lemma \ref{high moment inequality}), which we deduce from Proposition \ref{high moment sum f(n) ineq}.

Finally, on the short interval $[1, N]$ the distributions of $(\chi_p(1), \ldots, \chi_p(N))$ and $(f(1), \ldots, f(N))$ are close. A Siegel zero, if it exists, shifts the distribution of $(\chi_p(1), \ldots, \chi_p(N))$, and it turns out that the effect of this shift on $\tilde{P}_x$ can be expressed in terms of $\Cov \left( \mathds{1}_{A}, f(d) \right)$.

\section{Proof of Theorem \ref{thm sum f(n)}}

Denote by $\alpha = \alpha(x, y)$ the solution to the equation 
\[
\sum_{p \le y} \frac{\log p}{p^{\alpha} - 1} = \log x.
\]

\begin{lemma}\label{keylemma1}
There exist $K > 0$ and $y_0 > 0$ such that for any $y_0 < y < x$ that satisfies $y = (\log x)^{2 + \varepsilon}, \varepsilon > (\log x)^{-1/4}$ and
\begin{equation} \label{keylemma1equation}
(2 \alpha(x, y) - 1) y^{2 \alpha(x, y) - 1} \log y \ge K \log \log x
\end{equation}
we have 
$\Prob\left(f \in \mathcal{L}_x^+ \mid f(p) = 1 \, (p \le y)\right) = 1 - o(1)$.
\end{lemma}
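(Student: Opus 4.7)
My plan is to establish the lemma by a conditional second-moment estimate on the smooth-rough decomposition of $\sum_{n\le z}f(n)$, combined with a scaling argument for uniform control in $z$.

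Under the conditioning $f\equiv 1$ on primes $p\le y$, every $y$-smooth integer contributes $+1$, so writing each $n$ uniquely as $n=m\ell$ with $m$ $y$-smooth and $\ell$ $y$-rough yields
\[
\sum_{n\le z} f(n) \;=\; \Psi(z,y) + T(z), \qquad T(z) \;=\; \sum_{\substack{\ell\in R_y,\, \ell>1 \\ \ell\le z}} f(\ell)\,\Psi(z/\ell,y),
\]
where $R_y$ denotes the set of $y$-rough integers. Factoring $\ell=ra^2$ with $r,a\in R_y$ and $r$ squarefree, and using $f(ra^2)=f(r)$, I rewrite
\[
T(z) \;=\; W_1(z) + \sum_{\substack{r\ge 2 \\ r\text{ squarefree rough}}} f(r)\,W_r(z), \qquad W_r(z)=\sum_{\substack{a\in R_y \\ ra^2\le z}} \Psi(z/(ra^2),y).
\]
Since the signs $\{f(r)\}$ for distinct squarefree rough $r$ are pairwise orthogonal with unit variance, $\V T(z)=\sum_{r\ge 2}W_r(z)^2$.

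Next I would invoke Rankin's trick $\Psi(w,y)\le w^{\alpha}\zeta_y(\alpha)$ with $\alpha=\alpha(x,y)$ and $\zeta_y(\alpha)=\prod_{p\le y}(1-p^{-\alpha})^{-1}$, giving $W_r(z)\le \zeta_y(\alpha)(z/r)^{\alpha}\prod_{p>y}(1-p^{-2\alpha})^{-1}$. Squaring and summing over squarefree rough $r\ge 2$ via the Mertens-type estimate $\sum_{p>y}p^{-2\alpha}\asymp y^{1-2\alpha}/((2\alpha-1)\log y)$, and combining with the Hildebrand--Tenenbaum saddle-point lower bound $\Psi(z,y)\gg z^{\alpha}\zeta_y(\alpha)/\sigma$, I expect
\[
\frac{\V T(z)}{\Psi(z,y)^{2}} \;\ll\; \frac{1}{(2\alpha-1)\,y^{2\alpha-1}\log y} \;\le\; \frac{1}{K\log_2 x}
\]
by hypothesis. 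A pointwise Chebyshev inequality then gives $\Prob(T(z)\le -\Psi(z,y)) = o(1)$ at each fixed $z$.

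Promoting this pointwise bound to uniform positivity on $[1,x]$ is the crux. For $z\le y$ the conclusion is trivial since every integer $\le z$ is $y$-smooth and $T(z)=0$. For $z>y$, my plan is to exploit the near-scaling $W_r(z)/\Psi(z,y)\approx r^{-\alpha}$, which should hold uniformly in the relevant range of $z$: the random ratio $T(z)/\Psi(z,y)$ would then be well-approximated by the $z$-independent variable $\sum_{r\ge 2}f(r)r^{-\alpha}$, so $\min_{z\le x}T(z)/\Psi(z,y)$ is controlled by a single Chebyshev estimate plus a small deterministic fluctuation. The main obstacle is exactly this step: quantifying the deviation of $W_r(z)/\Psi(z,y)$ from its limiting value uniformly in $(r,z)$, especially when $r$ becomes comparable to $z$ and the saddle-point asymptotic for $\Psi(z/r,y)$ is less robust. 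An alternative is to apply a Rademacher--Menshov maximal inequality to the orthogonal system $\{f(r)\}$ along a geometric grid in $z$, provided the slack in the hypothesis absorbs the logarithmic loss from such a maximal inequality.
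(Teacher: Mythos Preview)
Your smooth--rough decomposition and the orthogonality computation for the variance of $T(z)$ match the paper's setup exactly. The gap is in the uniformity step, and neither of your proposed fixes closes it. A pure second-moment bound gives only
\[
\Prob\bigl(|T(z)|>\tfrac12\Psi(z,y)\bigr)\ \ll\ \frac{1}{K\log_2 x},
\]
and to cover $[y,x]$ you need at least a geometric grid of $\asymp\log x$ checkpoints; a union bound then blows up to $(\log x)/\log_2 x$, which is not $o(1)$. Your scaling idea fails because $W_r(z)/\Psi(z,y)\approx r^{-\alpha(z,y)}$, and $\alpha(z,y)$ genuinely moves from $1$ down to $\tfrac12+o(1)$ as $z$ ranges over $[y,x]$, so $T(z)/\Psi(z,y)$ is not close to a single $z$-independent random variable. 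As for Rademacher--Menshov, it controls maximal \emph{partial sums} of a fixed orthogonal series, not a one-parameter family of linear forms with varying weights; and even if you force things into that mould, the $(\log N)^2$ loss with $N\asymp\log x$ gives $(\log_2 x)^2/(K\log_2 x)$, still not $o(1)$.

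The paper's proof replaces the second moment by arbitrarily high moments via the Bonami--Hal\'asz inequality (their Lemma~2.4), which upgrades the pointwise failure probability to
\[
R_z\ \ll\ \exp\bigl(-c_0\delta^2(2\alpha-1)y^{2\alpha-1}\log y\bigr),
\]
so the hypothesis $(2\alpha-1)y^{2\alpha-1}\log y\ge K\log_2 x$ makes this $\ll(\log x)^{-c_0\delta^2 K}$, which does survive a union bound over $(\log x)^{O(1)}$ checkpoints. The remaining work---controlling the fluctuation of $\sum_{n\le z}f(n)$ between consecutive checkpoints $x_i, x_{i+1}$---is handled by a dyadic splitting of $(x_i,x_{i+1}]$ together with Hildebrand's triangle inequality $\Psi(x+z,y)-\Psi(x,y)\le\Psi(z,y)+1$ and another application of the high-moment bound. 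If you want to salvage your outline, the minimal change is to swap the variance computation for a $2m$-th moment with $m\asymp (2\alpha-1)y^{2\alpha-1}\log y$; the rest of the argument then has to be rebuilt around that.
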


\begin{proof}[Deduction of Theorem \ref{thm sum f(n)} from Lemma \ref{keylemma1}]
\[
\log x = \sum_{p \le y} \frac{\log p}{p^{\alpha} - 1} \ge \frac{\theta(y)}{y^{\alpha} - 1},
\]
where $\theta(y) = \sum_{p \le y} \log p$.
Hence for $y \gg (\log x)^{1.01}$
\[
\alpha \ge \frac{\log\left(1 + \frac{\theta(y)}{\log x}\right)}{\log y} = 
\frac{\log\left(\frac{y}{\log x}\right)}{\log y} +
o\left( \frac{1}{\log y} \right).
\]
If $y = (\log x)^{2 + \varepsilon}$, then
\[
2 \alpha(x, y) - 1 \ge \frac{\varepsilon}{2 + \varepsilon} + o(1/\log y).
\]
Hence
\begin{equation} \label{gg we need}
(2 \alpha(x, y) - 1) y^{2 \alpha(x, y) - 1} \log y \gg
\varepsilon y^{\frac{\varepsilon}{2 + \varepsilon}} \log y.
\end{equation}
We want the right-hand side to be $\gg \log \log x \asymp \log y$. 
This is satisfied if
\[
\varepsilon = 
\frac{\log_3 x}{\log_2 x} - \frac{\log_4 x}{\log_2 x} + \frac{R}{\log_2 x},
\]
where $R$ is a sufficiently large constant.
Thus we can take
\[
y = (\log x)^{2 + \varepsilon} \gg \frac{(\log x)^2 \log_2 x}{\log_3 x}.
\]

\end{proof}

From now on in this section, $f$ is a random completely multiplicative function taking values $\pm 1$ such that $f(p) = 1$ for all $p \le y$.

Let
\[
\Psi^*(x, y) := \sum_{p(m) > y} \Psi\left( \frac{x}{m^2}, y \right).
\]

Each $n \in \mathbb{N}$ can be uniquely written in the form  $m^2 s n_0$, where $s, n_0$ are square-free and $P(s) \le y, p(n_0) > y$. Since $f(m^2 s n_0) = f(n_0)$, we obtain

\begin{equation} \label{sum f(n) Psi* eq}
\sum_{n \le x} f(n) = \Psi^*(x, y) + \sum_{\substack{p(n) > y \\ n \ne 1}}{}^{\flat} f(n) \Psi^*\left( \frac{x}{n}, y \right).
\end{equation}

Our plan to prove Lemma \ref{keylemma1} is to show that the positive main term $\Psi^*(x, y)$ dominates the remaining sum with high probability.

We will need some auxiliary results.

\subsection{Auxiliary results}

\begin{lemma} \label{alpha ineq}
Uniformly in $1 \le t \le x$, $2 \le y \le x$, we have
\[
\Psi\left(\frac{x}{t}, y \right) \ll \frac{\Psi(x, y)}{t^{\alpha(x, y)}}.
\]
\end{lemma}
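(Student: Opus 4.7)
My plan is to combine Rankin's trick with the Hildebrand--Tenenbaum saddle-point asymptotic for the smooth-number counting function. Set $F_y(s) := \prod_{p \le y}(1-p^{-s})^{-1}$; then $\alpha = \alpha(x,y)$ is the unique minimizer of $\sigma \mapsto x^\sigma F_y(\sigma)$, and the saddle-point equation in the excerpt is its first-order condition. Let $\alpha_t := \alpha(x/t, y)$; since the equation $\sum_{p \le y}(\log p)/(p^\sigma - 1) = \log X$ is decreasing in $\sigma$, we have $\alpha_t \ge \alpha$ whenever $t \ge 1$.

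The first step is Rankin's inequality: $\Psi(x/t, y) \le (x/t)^\sigma F_y(\sigma)$ for every $\sigma > 0$. Taking $\sigma = \alpha_t$, and using that $\alpha_t$ itself minimizes $(x/t)^\sigma F_y(\sigma)$, gives
\[
\Psi(x/t, y) \le (x/t)^{\alpha_t} F_y(\alpha_t) \le (x/t)^{\alpha} F_y(\alpha) = t^{-\alpha}\, x^{\alpha} F_y(\alpha).
\]
The second step is to combine this with the Hildebrand--Tenenbaum asymptotic
\[
\Psi(X, y) \asymp \frac{X^{\alpha(X,y)} F_y(\alpha(X,y))}{\alpha(X,y)\sqrt{\phi_2(\alpha(X,y))}}, \qquad \phi_2(\sigma) := \sum_{p \le y}\frac{(\log p)^2\, p^\sigma}{(p^\sigma - 1)^2},
\]
valid uniformly for $2 \le y \le X$. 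Applied to $X = x$ (for the denominator) and $X = x/t$ (for the numerator), it yields
\[
\frac{\Psi(x/t, y)}{\Psi(x, y)} \ll t^{-\alpha} \cdot \frac{\alpha \sqrt{\phi_2(\alpha)}}{\alpha_t \sqrt{\phi_2(\alpha_t)}},
\]
so the claim reduces to bounding the last ratio by an absolute constant.

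This saddle-width comparison is the main technical obstacle. For $t$ in the principal range (say $1 \le t \le x^{1/2}$), I expect $\alpha_t$ and $\alpha$ to be of the same order: differentiating the saddle-point equation in $\log X$ gives $d\alpha/d\log X = -1/\phi_2(\alpha)$, so $\alpha_t - \alpha \ll (\log t)/\phi_2(\alpha)$, and by the smoothness of $\phi_2$ this forces $\phi_2(\alpha_t) \asymp \phi_2(\alpha)$, yielding the bound. In the extreme range $t > x^{1/2}$---in particular the subrange $y > x/t$ where $\Psi(x/t, y) = \lfloor x/t \rfloor$ and $\alpha_t$ may blow up---the saddle-point asymptotic for $\Psi(x/t, y)$ becomes useless, and I would instead combine the trivial bound $\Psi(x/t, y) \le x/t$ with a direct lower bound on $\Psi(x, y)$ from the Hildebrand--Tenenbaum formula to verify the inequality by hand. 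Controlling this endpoint regime carefully is the delicate part of the proof.
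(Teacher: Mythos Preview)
The paper does not actually prove this lemma: it quotes it as Th\'eor\`eme~2.4 of de~la~Bret\`eche--Tenenbaum, remarking only that the argument rests on the Hildebrand--Tenenbaum saddle-point formula for $\Psi(x,y)$. Your sketch is exactly the strategy behind that cited proof: apply the asymptotic $\Psi(X,y)\asymp X^{\alpha(X,y)}F_y(\alpha(X,y))/\bigl(\alpha(X,y)\sqrt{\phi_2(\alpha(X,y))}\bigr)$ at $X=x$ and $X=x/t$, use convexity of $\sigma\mapsto\log\bigl(X^\sigma F_y(\sigma)\bigr)$ to compare the main terms, and then control the ratio of saddle widths. So you are on the right track, and a small simplification is available: once you invoke the Hildebrand--Tenenbaum formula for $\Psi(x/t,y)$, the Rankin step $\Psi(x/t,y)\le (x/t)^{\alpha_t}F_y(\alpha_t)$ is redundant---what you really use from step~1 is only the minimality inequality $(x/t)^{\alpha_t}F_y(\alpha_t)\le (x/t)^{\alpha}F_y(\alpha)$.

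You are also right that the saddle-width comparison is where the real work lies, and your outline there is honest but incomplete. Since $\alpha_t\ge\alpha$ and $\phi_2$ is decreasing, one has $\phi_2(\alpha_t)\le\phi_2(\alpha)$, so the ratio $\alpha\sqrt{\phi_2(\alpha)}\big/\bigl(\alpha_t\sqrt{\phi_2(\alpha_t)}\bigr)$ is \emph{not} automatically bounded; controlling it uniformly over the full range $1\le t\le x$ requires the quantitative estimates on $\alpha$ and $\phi_2$ developed in Hildebrand--Tenenbaum, together with a separate treatment of the degenerate regime $x/t<y$ that you flag. These details are genuinely delicate and occupy several pages of the Bret\`eche--Tenenbaum paper, which is why the author here simply cites the finished result.
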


\begin{proof}
This is proved by de la Bret{\`e}che and Tenenbaum \cite[Theorem 2.4]{de2005proprietes}, using a formula for $\Psi(x, y)$ due to Hildebrand and Tenenbaum \cite[Theorem 1]{hildebrand1986integers} that was obtained via Perron's formula and the saddle-point method.
\end{proof}

\begin{lemma} \label{alpha}
Uniformly in $x \ge y \ge 2$, we have
\[
\alpha(x, y) = \frac{\log(1 + y / \log x)}{\log y} \left( 1 + O \left( \frac{\log_2 (1 + y)}{\log y} \right) \right).
\]
\end{lemma}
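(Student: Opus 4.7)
The approach is to analyze $S(\alpha, y) := \sum_{p \le y} \log p/(p^\alpha - 1)$ directly, since $\alpha$ is defined implicitly by $S(\alpha, y) = \log x$. The first step is Abel summation combined with the Prime Number Theorem in the form $\theta(u) = u + O(u/\log^2 u)$, giving
\[
S(\alpha, y) = \int_2^y \frac{du}{u^\alpha - 1} + O(1),
\]
so the problem reduces to estimating the deterministic integral $I(\alpha, y) := \int_2^y du/(u^\alpha - 1)$ and inverting.

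Next I would evaluate $I(\alpha, y)$ in the two extreme regimes. If $y$ is small compared with $\log x$, then the solution $\alpha$ is close to $0$, so $p^\alpha - 1 \sim \alpha \log p$ and $S(\alpha, y) \sim \pi(y)/\alpha \sim y/(\alpha \log y)$; solving $S(\alpha, y) = \log x$ yields $\alpha \sim y/(\log y \cdot \log x)$, consistent with $\log(1 + y/\log x)/\log y \sim (y/\log x)/\log y$ in this range. If instead $y$ is large so that $\alpha$ is close to $1$, the integral is dominated by $u$ near $y$ and equals $\sim y^{1-\alpha}/(1-\alpha)$; solving $= \log x$ gives $1 - \alpha \sim \log_2 x / \log y$, again consistent with $\log(1 + y/\log x)/\log y \sim 1 - \log_2 x / \log y$. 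Thus $\alpha_0 := \log(1 + y/\log x)/\log y$ interpolates between the two regimes.

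To upgrade these heuristics to the uniform multiplicative error $1 + O(\log_2(1+y)/\log y)$, I would use a Newton-step bootstrap. Differentiating gives $\partial_\alpha S(\alpha, y) = -\sum_{p \le y} (\log p)^2 p^\alpha /(p^\alpha - 1)^2$, which by analogous partial summation satisfies $|\partial_\alpha S(\alpha, y)| \gg \log x \cdot \log y$ uniformly in the range of interest. Combining a refined estimate of the form $|S(\alpha_0, y) - \log x| \ll \log x \cdot \log_2(1+y)$ with the mean-value inequality
\[
|\alpha - \alpha_0| \le \frac{|S(\alpha_0, y) - \log x|}{\min_{\alpha' \in [\alpha, \alpha_0]}|\partial_{\alpha'} S(\alpha', y)|}
\]
then gives $|\alpha - \alpha_0| \ll \log_2(1+y)/\log y$, which translates to the claimed multiplicative error.

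The main obstacle is the transitional regime where $y/\log x$ is of moderate size and neither extreme asymptotic for $I(\alpha, y)$ is cleanly valid. I would handle this either by citing the uniform saddle-point analysis of $\Psi(x, y)$ due to Hildebrand and Tenenbaum (in which this asymptotic for $\alpha(x, y)$ appears as a standard by-product), or by substituting $w = \alpha \log u$ to express $I(\alpha, y)$ in terms of the analytic bounded function $g(w) = w/(e^w - 1)$, which permits uniform estimates irrespective of the size of $\alpha \log y$ and smoothly bridges the two extreme regimes.
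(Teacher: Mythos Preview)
The paper does not give a proof of this lemma at all: its entire argument is the single sentence ``See \cite[Theorem~2]{hildebrand1986integers}.'' Your final option of citing the Hildebrand--Tenenbaum saddle-point analysis is therefore exactly what the paper does, and nothing more is required.

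Your direct sketch is a reasonable outline of how the Hildebrand--Tenenbaum argument proceeds, but two bookkeeping points would need tightening before it constitutes a proof. First, the claim $S(\alpha,y)=\int_2^y \frac{du}{u^\alpha-1}+O(1)$ is not uniform: when $\alpha$ is small (say $\alpha\log y\ll 1$) the boundary term $\frac{\theta(y)-y}{y^\alpha-1}$ alone is of size $\asymp \frac{y}{\alpha\log^3 y}$, which is large. What is true (and sufficient) is the relative estimate $S(\alpha,y)=I(\alpha,y)\bigl(1+O(1/\log^2 y)\bigr)$. Second, the Newton-step numbers as written give only an \emph{additive} bound $|\alpha-\alpha_0|\ll \log_2(1+y)/\log y$, whereas the lemma asserts a \emph{multiplicative} error, which is strictly stronger when $\alpha_0$ is small. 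The fix is to note that $\alpha\,|\partial_\alpha S(\alpha,y)|\asymp S(\alpha,y)\asymp\log x$ (differentiate the integral expression for $I$), and that in fact $|S(\alpha_0,y)-\log x|\ll \log x\cdot\frac{\log_2(1+y)}{\log y}$; dividing then yields $|\alpha-\alpha_0|/\alpha_0\ll\log_2(1+y)/\log y$ as required. With these adjustments the bootstrap goes through.
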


\begin{proof}
See \cite[Theorem 2]{hildebrand1986integers}.
\end{proof}

\begin{lemma}[Bonami--Halász inequality] \label{Halasz lemma}
Let $h(n)$ be a Rademacher random multiplicative function and let $b_j(n) \in \mathbb{C}$ be fixed coefficients. Then
\[
\left| \E \left( \prod_{1 \le j \le m} \sum_{n \ge 1}{}^{\flat} b_j(n) h(n) \right) \right| 
\le
\left( \prod_{1 \le j \le m} \sum_{n \ge 1}{}^{\flat} |b_j(n)|^2  (m - 1)^{\omega(n)} \right)^{1/2}.
\]
\end{lemma}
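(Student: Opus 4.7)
My plan is to deduce the stated bound from H\"older's inequality combined with the Bonami--Beckner hypercontractive estimate for Rademacher chaos. Write $F_j := \sum_{n \ge 1}^{\flat} b_j(n) f(n)$ so that the left-hand side is $|\E[F_1 \cdots F_m]|$. Since $f$ is completely multiplicative and $n$ is squarefree, $f(n) = \prod_{p \mid n} f(p)$ is a Walsh character of degree $\omega(n)$ in the independent Rademacher variables $\{f(p)\}_p$, so each $F_j$ is a Rademacher chaos in this sense.

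The first step is H\"older's inequality with equal exponents $p_1 = \cdots = p_m = m$, yielding
\[
\left|\E\Bigl[\prod_{j=1}^m F_j\Bigr]\right| \le \prod_{j=1}^m \|F_j\|_{L^m(\mathcal{F})}.
\]
This reduces the claim to the single-factor bound
\[
\|F_j\|_{L^m}^2 \le \sum_{n \ge 1}^{\flat} |b_j(n)|^2 (m-1)^{\omega(n)}
\]
for each $j$ independently.

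The second step is the Bonami hypercontractive estimate $\|X\|_q^2 \le \sum_S |c_S|^2 (q-1)^{|S|}$ for a complex Rademacher chaos $X = \sum_S c_S \chi_S$ and real $q \ge 2$, applied with $q = m$ (the cases $m = 1, 2$ are trivial or reduce to Cauchy--Schwarz on squarefree integers). I would establish it in the usual way: first truncate to primes $p \le P$ so that $F_j$ becomes a polynomial in finitely many Rademacher variables, then induct on the number of variables via the two-point inequality
\[
\Bigl(\tfrac{|a+b|^q + |a-b|^q}{2}\Bigr)^{2/q} \le |a|^2 + (q-1) |b|^2 \qquad (q \ge 2),
\]
applied conditionally on the remaining coordinates. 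Summing the squared $L^q$-increments exactly reproduces the weights $(m-1)^{\omega(n)}$. A final limit $P \to \infty$ passes to the infinite sum and is justified by monotone convergence whenever the right-hand side of the lemma is finite.

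The only substantive ingredient is the two-point inequality itself, which is a standard one-variable calculus exercise; H\"older, the tensorization induction, the $P\to\infty$ truncation, and the extension from real to complex coefficients (using that $|a+\varepsilon b|$ depends only on $|a|$, $|b|$, and $\Re(a\bar b)$) are all routine bookkeeping. I do not anticipate any genuine obstacle; the virtue of this approach is that the factorization by H\"older up front lets one forget that the $F_j$'s share the same underlying randomness $f$ and treat each one as an isolated Rademacher chaos.
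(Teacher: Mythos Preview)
Your argument is correct: H\"older with equal exponents reduces the product estimate to a single moment bound $\|F_j\|_m^2 \le \sum_n^{\flat} |b_j(n)|^2 (m-1)^{\omega(n)}$, and that is exactly the Bonami--Beckner hypercontractive inequality applied to Rademacher chaos, proved by induction on the number of variables via the two-point inequality. The reduction of the complex two-point inequality to the real one that you sketch also goes through without loss of constant.

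The paper itself does not give a proof at all: it simply cites Bonami's original paper for this argument and Hal\'asz's 1983 paper for an alternative. Your route is precisely the Bonami one. Hal\'asz's alternative avoids the word ``hypercontractivity'' by expanding the expectation directly and bounding the resulting combinatorial sum over $m$-tuples $(n_1,\dots,n_m)$ with $n_1\cdots n_m$ a square; the bookkeeping reproduces the same weights $(m-1)^{\omega(n)}$, so the two proofs are morally equivalent.
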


\begin{proof}
This statement was proved by Bonami in \cite{bonami1970etude}. See \cite[Lemma 2]{halasz1983random} for an alternative proof.
\end{proof}

\begin{lemma} \label{triangle smooth}
There exists a constant $y_1 \ge 2$ such that for all $y \ge y_1$, $x, z > 0$, we have
\[
\Psi(x + z, y) - \Psi(x, y) \le \Psi(z, y) + 1,
\]
and 
\[
\Psi^{*}(x + z, y) - \Psi^*(x, y) \le \Psi^*(z, y) + \sqrt{x + z}.
\]
\end{lemma}
\begin{proof}
    The first inequality was proved by Hildebrand \cite[Theorem 4]{hildebrand1985integers} without $+1$ on the right side but with an additional assumption that $x, z \ge y$. But if $z < y$, then
    \[
    \Psi(x + z, y) - \Psi(x, y) \le [z] + 1 = \Psi(z, y) + 1
    \]
    still holds. The same argument works if $x < y$. This proves the first inequality.

    Now we have
    \begin{multline*}
    \Psi^*(x + z, y) - \Psi^*(x, y) = \sum_{\substack{p(m) > y \\ m \le \sqrt{x + z}}} \left( \Psi\left( \frac{x + z}{m^2}, y \right) - \Psi\left( \frac{x}{m^2}, y \right) \right) \le \\
    \sum_{\substack{p(m) > y \\ m \le \sqrt{x + z}}} \left( \Psi\left( \frac{z}{m^2}, y \right) + 1 \right) \le \Psi^*(z, y) + \sqrt{x + z}.
    \end{multline*}
\end{proof}

Konyagin and Pomerance \cite{konyagin1997primes} showed the following lower bound.
\begin{lemma} \label{Konyagin}
If $x \ge 4$ and $2 \le y \le x$, then
    \[
    \Psi(x, y) \ge x^{1 - \frac{\log_2 x}{\log y}}.
    \]
\end{lemma}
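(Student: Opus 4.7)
The plan is to construct an explicit family of $y$-smooth integers up to $x$ and count it via stars-and-bars. First I would dispose of the trivial range $y \le \log x$: here $1 - \log_2 x/\log y \le 0$, so $x^{1 - \log_2 x/\log y} \le 1$, and the bound follows from $\Psi(x, y) \ge 1$ (since $n = 1$ is always $y$-smooth).

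For $y > \log x$, set $k = \lfloor \log x/\log y \rfloor$ and list the primes up to $y$ as $p_1, \ldots, p_{\pi(y)}$. To any tuple $(a_1, \ldots, a_{\pi(y)}) \in \mathbb{Z}_{\ge 0}^{\pi(y)}$ with $a_1 + \cdots + a_{\pi(y)} \le k$ I associate the $y$-smooth integer $\prod_i p_i^{a_i} \le y^k \le x$. Unique factorization ensures distinct tuples give distinct integers, and stars-and-bars counts them, so
\[
\Psi(x, y) \ge \binom{k + \pi(y)}{\pi(y)}.
\]
To extract the claimed bound, I would use $\binom{k+r}{k} \ge r^k/k!$ and Stirling to obtain $\log\binom{k + r}{k} \ge k(1 + \log(r/k)) - O(\log k)$, then combine with Chebyshev's lower bound $\pi(y) \gg y/\log y$ to get $\pi(y)/k \gg y/\log x$. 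This gives
\[
\log\Psi(x, y) \ge k(1 + \log y - \log_2 x) - O(\log k),
\]
and using $k \log y \ge \log x - \log y$ (from the floor) and rearranging yields $\log\Psi(x, y) \ge (1 - \log_2 x/\log y)\log x$ plus a correction of order $k - \log y$, which is non-negative whenever $\log y \lesssim \sqrt{\log x}$.

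The main obstacle is that the stars-and-bars bound only narrowly exceeds the target, so the absolute constants and lower-order terms must be tracked carefully: the Chebyshev constant contributes a term $k\log c$ which must be balanced against the $+k$ coming from Stirling and the slack $-\log y$ coming from the floor on $k$. For very large $y$ (outside the range where stars-and-bars is tight) one can instead invoke $\Psi(x, y) \ge x\rho(\log x/\log y)$ together with the Dickman bound $\rho(u) \ge u^{-u}$, giving $\log\Psi(x, y) \ge \log x - u\log u \ge \log x - u\log_2 x$ via $\log u \le \log_2 x$; these two estimates jointly cover the full range $2 \le y \le x$ claimed in the lemma.
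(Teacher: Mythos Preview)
The paper does not prove this lemma at all; it simply cites Konyagin--Pomerance \cite[Theorem 2.1]{konyagin1997primes}. So there is no in-paper argument to compare against, and your proposal is an attempt to supply an independent proof where the paper is content with a reference.

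Your stars-and-bars approach for the range $y > \log x$ is in the right spirit (and is indeed the core idea behind the Konyagin--Pomerance argument), and the disposal of $y \le \log x$ is fine. But the treatment of large $y$ has two genuine problems. First, the inequality $\rho(u) \ge u^{-u}$ is false: numerically $\rho(6) \approx 1.96\times 10^{-5} < 6^{-6} \approx 2.14\times 10^{-5}$, and asymptotically $\log \rho(u) = -u(\log u + \log\log u - 1 + o(1))$, so $\rho(u) < u^{-u}$ once $\log\log u > 1$. Second, even granting a correct estimate for $\rho$, invoking $\Psi(x,y) \ge x\rho(u)$ as a black box is problematic: this is itself a nontrivial lower bound on $\Psi$ whose proof (via Hildebrand/Buchstab-type induction) is at least as involved as the Konyagin--Pomerance argument, and is not known to hold cleanly for all $x \ge 4$ and $2 \le y \le x$ without error terms. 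You also acknowledge, correctly, that the stars-and-bars part requires careful tracking of the Chebyshev constant and the $-\log y$ loss from flooring $k$; you do not carry this out, and the lemma asserts the inequality for every $x \ge 4$, not just asymptotically, so the constants matter.

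In short: the broad strategy is sound for the bulk of the range, but the large-$y$ patch as written is incorrect, and the remaining details are exactly where the work lies. For the paper's purposes the citation suffices; if you want to include a proof, the original Konyagin--Pomerance argument handles all $y$ uniformly with a single refined counting (avoiding the split into ranges and the appeal to $\rho$), and that is what you should reproduce.
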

\begin{proof}
    See \cite[Theorem 2.1]{konyagin1997primes}.
\end{proof}

\subsection{Proof of Lemma \ref{keylemma1}}

Let $y = (\log x)^{2 + \varepsilon}$, $\varepsilon > 0$. Then, by Lemma \ref{alpha}
\[
\alpha(x, y) = \frac{1 + \varepsilon}{2 + \varepsilon} \left( 1 + O \left( \frac{\log_2 y}{\log y} \right) \right).
\]
Note that $\frac{1 + \varepsilon}{2 + \varepsilon} = \frac{1}{2} + \frac{\varepsilon}{2(2 + \varepsilon)}$.

By Lemma \ref{alpha ineq}
\begin{equation} \label{Psi* and Psi}
\frac{\Psi^*(x, y) - \Psi(x, y)}{\Psi(x, y)} \ll \sum_{\substack{p(m) > y \\ m \ne 1}} m^{-2 \, \alpha(x, y)} =
\prod_{p > y} \left( 1 - p^{-2\alpha(x, y)} \right)^{-1} - 1.
\end{equation}

We have
\[
\sum_{p > y} p^{-2 \alpha(x, y)} \ll \frac{y^{1 - 2\alpha}}{(\log y)(2 \alpha - 1)}.
\]

Let us assume that $y$ is such that $\frac{y^{1 - 2\alpha}}{(\log y)(2 \alpha - 1)} = o(1)$. Then (\ref{Psi* and Psi}) gives us
\[
\frac{\Psi^*(x, y) - \Psi(x, y)}{\Psi(x, y)} \ll \sum_{p > y} p^{-2 \alpha(x, y)} = o(1).
\]

Hence, under this condition, $\Psi^*(u, y) \sim \Psi(u, y)$ for any $u \le x$ since $\alpha(x, y)$ is monotonically decreasing in $x$.

\begin{lemma} \label{sum f(n) bound lemma1}
Let $\delta > 0$, $x > y$ and
\[
R_x := \Prob\left( \left| \sum_{\substack{p(n) > y \\ n \ne 1}}{}^{\flat} f(n) 
\Psi^* \left( \frac{x}{n}, y \right) \right| > \delta \Psi^*(x, y)
\right).
\]

Suppose that $y = y(x)$ satisfies $\frac{y^{1 - 2\alpha}}{(\log y)(2 \alpha - 1)} = o(1)$.
Then there exists an absolute constant $c_0 > 0$ such that

\[
R_x \ll \exp \left( - c_0 \delta^2 (2 \alpha - 1) y^{2 \alpha - 1} \log y  \right).
\]

\end{lemma}

\begin{proof}

By Lemma \ref{alpha ineq} we have 
\[
\Psi^*\left(\frac{x}{n}, y \right) \le C_1 n^{-\alpha(x, y)} \Psi^*(x, y)
\]
for some absolute constant $C_1$.
Thus Lemma \ref{Halasz lemma} and Markov's inequality give us
\[
R_x \ll C_1^{2m} \delta^{-2m} \left( \sum_{\substack{p(n) > y \\ n \ne 1}}{}^{\flat} n^{-2\alpha(x, y)} (2m - 1)^{\omega(n)} \right)^m.
\]

Hence
\[
R_x \ll C_1^{2m} \delta^{-2m} \left( \prod_{p > y}\left(1 + \frac{2m - 1}{p^{2 \alpha}} \right) - 1 \right)^m.
\]

Suppose that $\sum_{p > y} \frac{m}{p^{2 \alpha}} < 1/2$. Let $C_2 = \max(2 C_1^2, 2)$. Then
\[
R_x \ll C_1^{2m} \delta^{-2m} \left( \exp \left( 2 \sum_{p > y} \frac{m}{p^{2 \alpha}} \right) - 1 \right)^m \ll C_2^m \delta^{-2m}
\left(
\sum_{p > y} \frac{m}{p^{2 \alpha}}
\right)^m.
\]

The bound still holds if $\sum_{p > y} \frac{m}{p^{2 \alpha}} \ge 1/2$ since $R_x \le 1$.

Let $C_3$ be a constant such that
\[
T :=  \frac{C_3 \delta^{-2} y^{1 - 2 \alpha}}{(2\alpha - 1) \log y} \ge C_2 \delta^{-2} \sum_{p > y} \frac{1}{p^{2 \alpha}}.
\]

We have $R_x \ll (T m)^m$. Note that $T = o(1)$ by assumption. Let $m = [T^{-1} / e]$.
We obtain
\[
R_x \ll \exp \left( - c_0 \delta^2 (2 \alpha - 1) y^{2 \alpha - 1} \log y  \right)
\]
for some $c_0 > 0$.

\end{proof}

\begin{lemma} \label{first bound y}
Suppose that $(\log X)^3 \le y$. Then $\Prob\left(f \in \mathcal{L}_X^+ \mid f(p) = 1 \, (p \le y)\right) = 1 - o(1)$.
\end{lemma}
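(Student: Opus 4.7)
The plan is to combine the decomposition
\[
S_x := \sum_{n \le x} f(n) = \Psi^*(x, y) + E(x), \qquad E(x) = \sum_{\substack{p(m) > y \\ m \ne 1}}^\flat f(m)\, \Psi^*\!\left(\frac{x}{m}, y\right),
\]
which is valid under the conditioning $f(p) = 1$ for all $p \le y$ (the Rademachers $f(m)$ with $p(m) > y$ are unaffected by this conditioning), with Lemma \ref{P_x bound lemma} applied at a sparse adaptive grid. For $x \le y$ every $n \le x$ is $y$-smooth, hence $f(n) = 1$ and $S_x = x \ge 0$ deterministically; only the range $(y, X]$ needs probabilistic input.

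For that range, I would build an adaptive grid $y = x_0 < x_1 < \cdots < x_N$ by setting $x_{j+1} = x_j + \lfloor \tfrac{1}{2} \Psi^*(x_j, y) \rfloor$ until the grid overshoots $X$. Once we know $S_{x_j} \ge \tfrac{1}{2} \Psi^*(x_j, y)$ at every grid point, the triangle inequality $|S_x - S_{x_j}| \le x - x_j \le \tfrac{1}{2} \Psi^*(x_j, y)$ forces $S_x \ge 0$ throughout each $[x_j, x_{j+1}]$ and therefore on all of $[1, X]$. The event $S_{x_j} \ge \tfrac{1}{2}\Psi^*(x_j, y)$ is the same as $E(x_j) \ge -\tfrac{1}{2}\Psi^*(x_j, y)$, to which I would apply Lemma \ref{P_x bound lemma} with $\delta = 1/2$: Lemma \ref{alpha} in the regime $y \ge (\log X)^3$ yields $\alpha(x, y) = \tfrac{2}{3} + O(\log_3 X/\log_2 X)$ uniformly for $x \le X$, so a short computation gives $(2\alpha - 1)\, y^{2\alpha - 1} \log y \gg \log X \cdot \log_2 X$ and hence $R_{x_j} \ll \exp(-c \log X \cdot \log_2 X)$ for some $c > 0$, faster than any negative power of $X$.

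To count grid points I would use $\Psi^*(x, y) \sim \Psi(x, y) \asymp x\, \rho(\log x/\log y)$ together with $\rho(u) \gg X^{-1/3 + o(1)}$ throughout $y \le x \le X$, giving $N \ll X^{1/3 + o(1)}$. The union bound then reads
\[
\Prob\bigl(\exists\, j : E(x_j) < -\tfrac{1}{2}\Psi^*(x_j, y)\bigr) \le \sum_{j \le N} R_{x_j} \ll X^{1/3 + o(1)} \exp(-c \log X \log_2 X) = o(1),
\]
which gives the lemma.

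The main technical point is the quantitative control of $(2\alpha - 1)\, y^{2\alpha - 1}\, \log y$: this quantity must comfortably dominate $\log N \asymp \tfrac{1}{3}\log X$. The hypothesis $y \ge (\log X)^3$ is exactly what secures both $2\alpha - 1 \ge \tfrac{1}{3} - o(1)$ and $y^{2\alpha - 1} \ge (\log X)^{1 - o(1)}$, and the extra factor of $\log y \asymp \log_2 X$ provides the margin that makes the union bound decisive; it is precisely this balancing of exponents that breaks down once $y$ shrinks below $(\log X)^3$, motivating the weaker condition handled by Lemma \ref{keylemma1}.
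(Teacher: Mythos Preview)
Your argument is correct in outline, but it is more elaborate than necessary. The paper's proof dispenses with the adaptive grid entirely: it applies Lemma~\ref{P_x bound lemma} with $\delta = 1/10$ at \emph{every} integer $n \in [y, X]$, giving
\[
1 - \Prob\bigl(f \in \mathcal{L}_X^+ \mid f(p) = 1 \ (p \le y)\bigr) \le \sum_{y \le n \le X} R_n \ll X \exp\Bigl(-\tfrac{c_0}{100}(2\alpha - 1) y^{2\alpha - 1} \log y\Bigr),
\]
and observes that the hypothesis $y \ge (\log X)^3$ forces $(2\alpha - 1)y^{2\alpha - 1}\log y \gg \log X \cdot \log_2 X$, so the right side is $X^{1 - c\log_2 X} = o(1)$. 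Since the tail is already super-polynomially small, there is no need to thin the union bound via a sparse grid and the triangle inequality; your machinery anticipates the more delicate checkpoint argument the paper uses later for Lemma~\ref{keylemma1}, but here it buys nothing.

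Two minor imprecisions, neither fatal: (i) the claim $\alpha(x,y) = \tfrac{2}{3} + O(\log_3 X/\log_2 X)$ ``uniformly for $x \le X$'' is not literally true (for $x$ near $y$ one has $\alpha$ near $1$), but what you actually need is the lower bound $\alpha(x,y) \ge \alpha(X,y) \ge \tfrac{2}{3} - o(1)$, which holds by monotonicity; (ii) the Hildebrand asymptotic $\Psi(x,y) \asymp x\rho(u)$ is not guaranteed in the regime $y = (\log X)^3$, but for your grid-count a polynomial lower bound such as Lemma~\ref{Konyagin} suffices, and in any case even the crude count $N \le X$ is enough.
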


\begin{proof}
Let us apply Lemma \ref{sum f(n) bound lemma1} with $\delta = 1/10$ at each integer in the interval $[y, X]$. We obtain
\begin{multline*}
1 - \Prob\left(f \in \mathcal{L}_X^+ \mid f(p) = 1 \, (p \le y)\right) \le \sum_{y \le n \le X} R_n \ll \\
X \exp \left(-\frac{c_0}{100} (2\alpha(X, y) - 1) y^{2\alpha(X, y) - 1} \log y \right). 
\end{multline*}
The right-hand side is $o(1)$ if
\begin{equation} \label{X equation alpha}
(2 \alpha(X, y) - 1) y^{2 \alpha(X, y) - 1} \log y \ge K_0 \log X,
\end{equation}
where $K_0$ is sufficiently large. Inequality (\ref{X equation alpha}) follows from the assumption $(\log X)^3 \le y$. This can be shown in the same way as we deduced Theorem \ref{thm sum f(n)} from Lemma \ref{keylemma1}.
\end{proof}

From now on we assume that $x$ is sufficiently large so that $y = (\log x)^{2 + \varepsilon} > y_0, \varepsilon > 0$. In particular, the condition $y \ge y_1$ of Lemma \ref{triangle smooth} is satisfied. Let $\log_2 X = \frac{2}{3} \log_2 x$. Lemma \ref{first bound y} shows that with probability $1 - o(1)$ the partial sums of $f(n)$ are nonnegative up to $X$.

Let $x_0 = X$, $x_{i+1} = x_i + \frac{x_i}{h(x_i)}$, where $(\log x_i)^{2.01} \le h(x_i) \ll (\log x_i)^{100}$ is a monotonically increasing function that will be defined later. There are $O((\log x)^{102})$ points $x_i$. We apply Lemma \ref{sum f(n) bound lemma1} with $\delta = 1/100$ at each $x_i$ and obtain that
\[
\Prob \left( \exists x_i \, \sum_{n \le x_i} f(n) \le 0.99 \Psi^{*}(x_i, y) \right) \ll
(\log x)^{102} \exp\left(-\frac{c_0}{10^4} (2\alpha - 1) y^{2\alpha - 1} \log y \right).
\]
This is $o(1)$ if (\ref{keylemma1equation}) is satisfied with $K$ sufficiently large.

We denote
\[
R := \Prob \left( \exists i \, \exists u \in [x_i, x_{i + 1}] : \, \sum_{x_i < n \le u} f(n) \le -\frac{1}{10} \Psi^{*}(x_i, y) \right).
\]
It is enough to prove that $R = o(1)$ if the assumptions of Lemma \ref{keylemma1} hold.

First let us rewrite $\sum_{x_{i} < n \le u} f(n)$ as
\[
\sum_{x_{i} < n \le u} f(n) = \Psi^*(x_i + u, y) - \Psi^*(x_i, y) + \sum_{\substack{p(n) > y \\ n \ne 1}}{}^{\flat} f(n) \left( \Psi^*\left( \frac{x_i + u}{n}, y \right) - \Psi^*\left( \frac{x_i}{n}, y \right) \right).
\]

Since $\Psi^*(x_i + u, y) - \Psi^*(x_i, y)$ is nonnegative, it is enough to give a good upper bound on
\[
R'_i := \Prob\left(\exists u\in [x_i, x_{i + 1}] : \tilde{\sum}_{x_i < n \le u}f(n) \le -\frac{1}{10} \Psi^*(x_i, y) \right),
\]
where $\tilde{\sum}$ means that the sum is over integers that are not of the form $m^2 s$, where $s$ is $y$-smooth. 

Let
\[
R'_{k, l} := \Prob \left( \left|\tilde{\sum}_{x_i + 2^k l < n \le x_i + 2^{k} (l + 1)} f(n) \right| \ge \frac{(\log (x_{i + 1} - x_i))^{-1}}{50}  \Psi^*(x_i, y) \right).
\]

Let $u \in [x_i, x_{i + 1}]$ be such that $u - x_i$ is a natural number. Let $u - x_i = 2^{\alpha_1} + 2^{\alpha_2} + \ldots + 2^{\alpha_j}$ be the binary expansion, where $\alpha_1 > \alpha_2 > \ldots > \alpha_j$. Of course $j \le \log(u - x_i) / \log 2$. This gives a partition of the interval $(x_i, u]$ into subintervals $(x_i, x_i + 2^{\alpha_1}], (x_i + 2^{\alpha_1}, x_i + 2^{\alpha_1} + 2^{\alpha_2}], \ldots, (x_i + \sum_{\tau \le j - 1} 2^{\alpha_{\tau}}, x_i + \sum_{\tau \le j} 2^{\alpha_{\tau}}]$. 
All of them are of the form $(x_i + 2^k l, x_i + 2^{k} (l + 1)]$. 
Hence
\begin{equation} \label{R'_i and R'_{k, l}}
R'_i \le \sum_{2^k l \le x_{i + 1} - x_i} R'_{k, l}.
\end{equation}

We have
\begin{equation} \label{f(n) 2^k interval sum}
\tilde{\sum}_{x_i + 2^k l < n \le x_i + 2^{k} (l + 1)} f(n) = \sum_{\substack{p(n) > y \\ n \ne 1}}{}^{\flat} f(n) \left( \Psi^*\left( \frac{x_i + 2^k (l + 1)}{n}, y \right) - \Psi^*\left( \frac{x_i + 2^k l}{n}, y \right) \right).
\end{equation}

Lemma \ref{triangle smooth} gives us
\[
\Psi^*\left( \frac{x_i + 2^k (l + 1)}{n}, y \right) - \Psi^*\left( \frac{x_i + 2^k l}{n}, y \right) \le
 \Psi^* \left( \frac{2^k}{n}, y \right) + 
\sqrt{\frac{2 x_{i + 1}}{n}}.
\]

Thus we can fix two sequences $b_{k, l, i}(n)$ and $d_{k, l, i}(n)$ such that
\begin{equation} \label{two sequences div}
\Psi^*\left( \frac{x_i + 2^k (l + 1)}{n}, y \right) - \Psi^*\left( \frac{x_i + 2^k l}{n}, y \right) = b_{k, l, i}(n) + d_{k, l, i}(n), 
\end{equation}
\[
b_{k, l, i}(n) \le \Psi^* \left( \frac{2^k}{n}, y \right), \qquad
 d_{k, l, i}(n) \le \sqrt{\frac{2 x_{i + 1}}{n}}.
\]

Then $R'_{k, l} \le B'_{k, l} + D'_{k, l}$, where
\[
 B'_{k, l} = \Prob \left( \left| \sum_{\substack{p(n) > y \\ n \ne 1}}{}^{\flat} b_{k, l, i}(n) f(n) \right|
 \ge \frac{(\log (x_{i + 1} - x_i))^{-1}}{100}  \Psi^*(x_i, y) \right),
\]
\[
 D'_{k, l} = \Prob \left(  \left| \sum_{\substack{p(n) > y \\ n \ne 1}}{}^{\flat} d_{k, l, i}(n) f(n) \right|
 \ge \frac{(\log (x_{i + 1} - x_i))^{-1}}{100}  \Psi^*(x_i, y) \right).
\]

We apply Lemma \ref{Halasz lemma}, Lemma \ref{alpha ineq} and Markov's inequality to obtain
\[
B'_{k, l} \ll C_4^{m} (\log (x_{i + 1} - x_i))^{2m}  \left(  \left(\frac{2^k}{x_i}\right)^{2\alpha(x_{i + 1}, y)} \sum_{\substack{p(n) > y \\ n \ne 1}}{}^{\flat} n^{-2 \alpha(x_{i + 1}, y)} (2m - 1)^{\omega(n)} \right)^m.
\]
Note that $(2^k/x_{i})^{2\alpha(x_{i+1}, y)} \le 2^k/x_{i} = \frac{2^{k}}{(x_{i + 1} - x_i)} h(x_i)^{-1}$.
Following the proof of Lemma \ref{sum f(n) bound lemma1} we obtain
\begin{equation} \label{B_{k,l}' bound}
B_{k,l}' \ll \exp  \left(-c_1 \frac{(x_{i + 1} - x_i)}{2^k} \frac{h(x_i)}{(\log x_i)^2} (2\alpha - 1) y^{2\alpha - 1} \log y \right).
\end{equation}

Now we provide an upper bound on $D'_{k, l}$.
From Lemma \ref{Konyagin} we deduce that 
\[
\Psi(x_i, y) \ge x_i^{1 - \frac{1}{2 + \varepsilon}} = x_i^{\frac{1}{2} + \frac{\varepsilon}{2(2 + \varepsilon)}}.
\]

This and Lemma \ref{Halasz lemma} with Markov's inequality imply that
\[
D'_{k, l} \ll C_5^m (\log x_i)^{2m} x_i^{-\frac{m \varepsilon}{2 + \varepsilon}} \left(  \sum_{\substack{p(n) > y \\ 1 \ne n \le 2x_i}}{}^{\flat} \frac{(2m - 1)^{\omega(n)}}{n}
\right)^m.
\]

A standard application of Rankin's trick shows that
\[
\sum_{n \le 2x_i} \frac{(2m - 1)^{\omega(n)}}{n} \ll (C_6 \log x_i)^{2m - 1}.
\]

Thus 
\begin{equation} \label{D_{k,l}' bound}
D'_{k, l} \ll C_7^m (\log x_i)^{2m^2 + m} x_i^{-\frac{m \varepsilon}{2 + \varepsilon}}.
\end{equation}

Inequalities (\ref{R'_i and R'_{k, l}}), (\ref{B_{k,l}' bound}) and (\ref{D_{k,l}' bound}) imply that
\begin{multline*}
R'_i \ll  C_7^m h(x_i)^{-1} (\log x_i)^{2m^2 + m} x_i^{1-\frac{m \varepsilon}{2 + \varepsilon}} +
\\
\sum_{2^k \le x_{i + 1} - x_i} \frac{|x_{i + 1} - x_i|}{2^k} \exp  \left(-c_1 \frac{(x_{i + 1} - x_i)}{2^k} \frac{h(x_i)}{(\log x_i)^2} (2\alpha - 1) y^{2\alpha - 1} \log y \right) \ll
\\
 C_7^m h(x_i)^{-1} (\log x_i)^{2m^2 + m} x_i^{1-\frac{m \varepsilon}{2 + \varepsilon}} +  \exp \left( - c_2 (2 \alpha - 1) y^{2 \alpha - 1} \log y  \right).
\end{multline*}

There are no more than $O(h(x) (\log x))$ checkpoints $x_i$. Hence

\[
R \ll h(x) (\log x)  \exp \left( - c_2 (2 \alpha - 1) y^{2 \alpha - 1} \log y  \right) + \sum_{x_i > y}  C_7^m h(x_i)^{-1} (\log x_i)^{2m^2 + m} x_i^{1-\frac{m \varepsilon}{2 + \varepsilon}}.
\]
Let us take $m = 10[\varepsilon^{-1}]$. Note that $(\log x_i)^{2m^2 + m} = o(x_i^{1/10})$ is guaranteed by $\varepsilon \ge 1000 \sqrt{\frac{\log_2 X}{\log X}}$. This is satisfied, because $\varepsilon > 1000 \frac{(\log_2 x)^{1/2}}{(\log x)^{1/3}} \ge 1000 \sqrt{\frac{\log_2 X}{\log X}}$.
Hence
\[
R \ll h(x) (\log x)  \exp \left( - c_2 (2 \alpha - 1) y^{2 \alpha - 1} \log y  \right) + o(y^{-1}).
\]
Now we take $h(x) = (\log x)^3$. We see that $R = o(1)$ if $K$ in (\ref{keylemma1equation}) is large enough.
This finishes the proof of Lemma \ref{keylemma1}. \qed

\section{Proof of Theorem \ref{Kerr and Klurman improvement} and Theorem \ref{cor from conj}}

\subsection{Lower bound for $\sum_{n \le x} g(n)$}

\begin{prop} \label{g(n) final lower bound}
    Let $f$ be a random completely multiplicative function such that for each prime $p$ we have $\Prob(f(p) = 1) = \Prob(f(p) = -1) = 1/2$. Let $g = f * 1$.

    Then there exist $c > 0$ and $\beta > 0$ such that 
    \begin{equation} \label{g(n) upper bound with C}
        \Prob \left(
        \sum_{n \le x} g(n) \ge c x \exp \left( \sum_{p \le x} \frac{f(p)}{p} \right)
        \right) = 1 - O \left( \exp \left( - x^{\beta} \right) \right).
    \end{equation}
\end{prop}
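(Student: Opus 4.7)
The strategy is a Dirichlet hyperbola decomposition isolating a single large prime factor of $n\le x$, Hoeffding concentration of the resulting prime Rademacher sum, and a Mertens-type evaluation of the remaining short Euler sum in terms of $\exp(\sum_{p\le x}f(p)/p)$. Fix $D=x^{1/2-\delta}$ for a small $\delta>0$. Every $n\le x$ has at most one prime divisor exceeding $\sqrt x$, so the map $(d,p)\mapsto dp$ on $\{d\le D\}\times\{p\text{ prime},\ p\in(x/(2d),x/d]\}$ is injective, and $p>\sqrt x>d$ automatically enforces $(p,d)=1$. Multiplicativity of $g=1*f$ together with $g(p)=1+f(p)$ then gives
\[
\sum_{n\le x} g(n)\;\ge\; \sum_{d\le D}g(d)\sum_{p\in I_d}(1+f(p)),\qquad I_d:=(x/(2d),x/d].
\]

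By the prime number theorem $\#\{p\in I_d\}\asymp x/(d\log x)\ge x^{1/2+\delta}/\log x$, so Hoeffding applied to the Rademacher sum $\sum_{p\in I_d}f(p)$ (variance $\#I_d$) yields $\Prob(|\sum_{p\in I_d}f(p)|>\tfrac12 \#I_d)\le 2\exp(-c\#I_d)$. A union bound over the at most $D\le x^{1/2}$ values of $d$ produces an event of probability $1-\exp(-x^{\beta_0})$ on which $\sum_{n\le x}g(n)\ge (c_1 x/\log x)\sum_{d\le D}g(d)/d$. Separately, the tail $\sum_{D<p\le x}f(p)/p$ has variance $O(1/D)$, so Hoeffding delivers $\exp(\sum_{p\le D}f(p)/p)\asymp\exp(\sum_{p\le x}f(p)/p)$ outside an event of probability $\exp(-cD)$. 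It thus suffices to prove
\[
\sum_{d\le D}\frac{g(d)}{d}\;\ge\; c_2\log D\cdot\exp\Big(\sum_{p\le D}\frac{f(p)}{p}\Big)
\]
on an event of probability $1-\exp(-x^{\beta_1})$. The deterministic Euler identity $\sum_{P(d)\le D}g(d)/d=\prod_{p\le D}[(1-1/p)(1-f(p)/p)]^{-1}\asymp\log D\cdot\exp(\sum_{p\le D}f(p)/p)$, valid by Mertens and $-\log(1-f(p)/p)=f(p)/p+O(1/p^2)$, gives the target size.

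My plan for this core step is to combine the elementary convolution formula $\sum_{d\le D}g(d)/d=\sum_{e\le D}(f(e)/e)(\log(D/e)+\gamma)+O(1)$ with a Halász-type concentration of $F(D):=\sum_{e\le D}f(e)/e$ around its limit $L(1,f)\asymp\exp(\sum_{p\le D}f(p)/p)$, plus a second-moment bound of size $O(\exp(\sum f(p)/p))$ on the logarithmically weighted companion $\sum_{e\le D}f(e)(\log e)/e$. The main obstacle is upgrading these concentration statements to the stretched-exponential probability $1-\exp(-x^\beta)$ required by the proposition: a naive Rankin trimming of the $D$-smooth tail of the Euler product introduces a random twisted factor whose concentration around $\exp(\sum f(p)/p)$ is only Gaussian with bounded variance, delivering only constant-probability bounds. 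Circumventing this will likely require iterating the Dirichlet decomposition at scales $D,\sqrt D, D^{1/4},\dots$ with a fresh Hoeffding application at each scale, complemented by high-moment estimates via Bonami--Halász (Lemma \ref{Halasz lemma}) for the low-frequency error terms.
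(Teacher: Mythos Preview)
Your reduction via one large prime and Hoeffding is correct and cleanly gives
\[
\sum_{n\le x}g(n)\ \ge\ \frac{c_1x}{\log x}\sum_{d\le D}\frac{g(d)}{d}
\]
with failure probability $\exp(-x^{\beta_0})$, and the replacement of $\sum_{p\le D}$ by $\sum_{p\le x}$ in the exponent is fine. The genuine gap is exactly where you identify it: the ``core step'' $\sum_{d\le D}g(d)/d\ge c_2(\log D)\exp\bigl(\sum_{p\le D}f(p)/p\bigr)$ with probability $1-\exp(-x^{\beta_1})$. This is not a side issue --- it is a weighted version of the very proposition you are trying to prove, at scale $D$. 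Your proposed iteration replaces the problem at scale $x$ by the same problem at scale $x^{1/2-\delta}$, and after $O(\log\log x)$ steps you reach bounded scale; but at each step you lose a multiplicative constant in $c$, and near the bottom of the iteration the Hoeffding bound $\exp(-cD_k^{1/2})$ gives no saving. So the scheme as written does not close, and Bonami--Hal\'asz does not obviously rescue it: that lemma controls high moments of linear forms in $f$, whereas here the difficulty is a \emph{lower} bound on a nonnegative sum in terms of a random exponential.

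The paper bypasses this entirely by invoking a \emph{deterministic} sieve lower bound for nonnegative multiplicative functions. It quotes Lemma~\ref{Kerr and Klurman Proposition 3.3} (Kerr--Klurman, built on Matom\"aki--Shao): whenever $\sum_{p\in[x^{1/v},x],\,f(p)=1}1/p\ge 1+\varepsilon$ for a fixed bounded $v$, one has $\sum_{n\le x}g(n)\gg_{\varepsilon,v} x\exp\bigl(\sum_{p\le x}f(p)/p\bigr)$. Only \emph{one} Hoeffding application is then needed, to the Rademacher sum $\sum_{x^{1/v_0}\le p\le x}f(p)/p$, which has variance $\asymp x^{-1/v_0}$ and therefore concentrates with failure probability $\exp(-x^{1/v_0})$. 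The hard work is thus packaged in the sieve theorem (alternatively, in Tenenbaum's lower bound for $\sum g(n)$ under the regularity condition (\ref{sum g(n) lower bound condition})), and the probabilistic input is trivial. What your approach is missing is precisely such a deterministic lower-bound theorem for $\sum_{n\le x}g(n)$ or $\sum_{d\le D}g(d)/d$; without it, the reduction is circular.
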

One can see from the proof that any fixed $\beta < e^{-2}$ is admissible in Proposition \ref{g(n) final lower bound}.
Our proof of Proposition \ref{g(n) final lower bound} is based on \cite[Proposition 3.3]{kerr2022negative} by Kerr and Klurman, which in turn is based on a theorem by Matom\"{a}ki and Shao \cite[Hypothesis P]{matomaki2020sieve}.

\begin{lemma} \label{Kerr and Klurman Proposition 3.3}
    Let $\varepsilon > 0$ be sufficiently small. Let $f$ be a multiplicative function with $-1 \le f(n) \le 1$ for all $n$. Let $g = 1*f$. For $0 < \delta < 1$, let $\mathcal{P}_{\delta} = \{ p \, \text{prime} \, : \, f(p) \ge -\delta \}$, and suppose for some 
    \[
    \frac{40000}{\varepsilon^2} \le v \le \frac{\log x}{1000 \log_2 x}
    \]
    we have
    \begin{equation*} \label{sum 1/p large}
    \sum_{\substack{p \in \mathcal{P}_{\delta} \\ x^{1/v} \le p \le x}} \frac{1}{p} \ge 1 + \varepsilon.
    \end{equation*}
    Then
    \begin{equation} \label{Kerr and Klurman Proposition 3.3 conclusion}
    \sum_{n \le x} g(n) \gg \varepsilon^4 \left(\frac{(1 - \delta)}{v} \right)^{v(1 + o(1)) / e} \exp \left( \sum_{p \le x} \frac{f(p)}{p} \right) x.
    \end{equation}
\end{lemma}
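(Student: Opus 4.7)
The plan is to prove this lower bound by a sieve-theoretic argument following the Matom\"aki--Shao framework, as adapted in Kerr and Klurman's original Proposition~3.3. The core idea exploits the positivity $g(p) = 1 + f(p) \ge 1 - \delta > 0$ on primes $p \in \mathcal{P}_\delta$, so that integers $n$ built from such primes contribute positively to $\sum_{n \le x} g(n)$ via the multiplicativity of $g$. The hypothesis $\sum_{p \in \mathcal{P}_\delta \cap [x^{1/v}, x]} 1/p \ge 1 + \varepsilon$ supplies the critical prime mass needed for the sieve to produce a nontrivial lower bound, and the specific range of $v$ guarantees both that the sieve is long enough to see a Poisson-type concentration and short enough for the Dickman-type saddle point analysis to remain in a favorable regime.

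First, I would decompose each $n \le x$ as $n = p_1 \cdots p_k \cdot m$, where $p_1, \ldots, p_k$ are the distinct prime factors of $n$ lying in $\mathcal{P}_\delta \cap [x^{1/v}, x]$ and $m$ is coprime to them. Multiplicativity gives $g(n) \ge (1-\delta)^k g(m)$, and summing over the decomposition yields
\[
\sum_{n \le x} g(n) \ge \sum_{k \ge 0} \frac{(1-\delta)^k}{k!} \sum_{\substack{p_1, \ldots, p_k \in \mathcal{P}_\delta \cap [x^{1/v}, x] \\ \text{distinct}}} \sum_{\substack{m \le x/(p_1 \cdots p_k) \\ (m, p_1 \cdots p_k) = 1}} g(m).
\]
Second, for the inner sum over $m$ I would invoke a Wirsing--Hal\'asz-type lower bound of the form $\sum_{m \le Y} g(m) \gg Y \exp\left( \sum_{p \le Y} f(p)/p \right)$, which holds under the structural hypothesis of Matom\"aki--Shao. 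Since the removed primes $p_i$ lie in $\mathcal{P}_\delta$, replacing $\sum_{p \le Y}$ by $\sum_{p \le x}$ costs only a factor $\exp(O(k \delta))$ which can be absorbed. Third, balancing (i) the growth of $(1-\delta)^k$, (ii) the shrinkage of $Y = x/(p_1 \cdots p_k)$, and (iii) the factorial $k!$, the sum concentrates near $k \asymp v/e$ by a Dickman-type saddle-point estimate. Applying Stirling's formula at the saddle extracts the factor $\left(\frac{1-\delta}{v}\right)^{v(1+o(1))/e}$, while the $\varepsilon^2$ prefactor captures the usual sieve slack between the hypothesis $S \ge 1 + \varepsilon$ and the critical threshold $S = 1$ below which the argument collapses.

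The main obstacle will be verifying the Wirsing/Matom\"aki--Shao lower bound uniformly over the full range $40000/\varepsilon^2 \le v \le \log x / (1000 \log_2 x)$: the $o(1)$ in the exponent must remain uniform both near the lower threshold (where Stirling errors at the saddle are borderline) and near the upper threshold (where the prime window $[x^{1/v}, x]$ becomes narrow on the logarithmic scale, and the structural hypothesis underpinning the Wirsing estimate must be checked delicately). A further subtlety is that $g = 1 * f$ may take negative values at prime powers, so the sieve must isolate the contribution from $n$ whose prime-power divisors come only from the complement of $\mathcal{P}_\delta$; this is ultimately tractable because $p_i \ge x^{1/v}$ forces any square divisor $p_i^2$ to be of size $\ge x^{2/v}$, making the prime-power contribution a lower-order error, but it requires careful bookkeeping to propagate through the three-level sum above without corrupting either the combinatorial constant $1/e$ or the slack $\varepsilon^2$.
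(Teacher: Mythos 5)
The paper does not give a proof of this lemma at all: it simply cites \cite[Proposition 3.3]{kerr2022negative}. So there is no in-paper argument to compare you against; the only question is whether your reconstruction of the Kerr--Klurman argument is sound. Your high-level outline (exploit $g(p)=1+f(p)\ge 1-\delta>0$ on $\mathcal{P}_\delta$, concentrate the prime-factor count near $k\asymp v/e$, and extract $\bigl(\tfrac{1-\delta}{v}\bigr)^{v(1+o(1))/e}$ by Stirling) matches the spirit of the Matom\"aki--Shao sieve as deployed by Kerr and Klurman, and the saddle point location $k\asymp v/e$ and the $\varepsilon^2$ sieve slack are both the right features to aim for. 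However, two steps in your proposal have genuine gaps.

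First, the opening inequality
\[
\sum_{n \le x} g(n) \;\ge\; \sum_{k \ge 0} \frac{(1-\delta)^k}{k!}
\sum_{\substack{p_1,\dots,p_k \in \mathcal{P}_\delta\cap[x^{1/v},x]\\ \text{distinct}}}
\;\sum_{\substack{m \le x/(p_1\cdots p_k)\\ (m,p_1\cdots p_k)=1}} g(m)
\]
does not follow from the decomposition you describe. Partitioning $n$ according to the distinct primes of $n$ lying in the window only accounts for $n$ that are \emph{squarefree} at those primes, and the remaining $n$ (those with $p_i^2\mid n$) have been silently dropped. Since $g=1*f$ is not nonnegative --- indeed $g(p^2)=1+f(p)+f(p^2)$ can be as small as $-\delta$ even for $p\in\mathcal{P}_\delta$, and $g$ is entirely unconstrained in sign off $\mathcal{P}_\delta$ --- discarding terms does not produce a lower bound. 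You flag this in your last paragraph as "bookkeeping," but it is a real obstruction: one cannot restrict a sum of a sign-changing function to a subset and claim inequality in the favorable direction. The actual argument has to construct a genuine pointwise lower bound for $g$ (a lower-bound sieve weight in the Matom\"aki--Shao style), not a restriction to a sub-collection of $n$.

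Second, the ``Wirsing--Hal\'asz-type lower bound'' $\sum_{m\le Y} g(m)\gg Y\exp\bigl(\sum_{p\le Y}f(p)/p\bigr)$ that you invoke in step two is false in general: taking $f=\lambda$ gives $g=1*\lambda=\mathds{1}_{\square}$, so the left side is $\sim\sqrt{Y}$ while the right side is $\asymp Y/\log Y$. Such lower bounds for $\sum g(m)$ require positivity or repulsion hypotheses that your inner sum (over $m$ avoiding the window primes $p_1,\dots,p_k$, but otherwise unrestricted) has no reason to satisfy, and in any case invoking a lower bound of precisely the shape one is trying to prove is close to circular. The Kerr--Klurman argument avoids this by running the sieve once, globally, against a carefully constructed nonnegative weight that is dominated by $g$, rather than by peeling off primes and recursing into a Wirsing bound. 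So while the saddle-point combinatorics in your third step is pointing at the right answer, the analytic foundation in steps one and two would need to be replaced.
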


\begin{proof}
    See \cite[Proposition 3.3]{kerr2022negative}.
\end{proof}

\begin{lemma}[Hoeffding's inequality] \label{Hoeffding's inequality}
    Let $X_1, X_2, \ldots, X_n$ be independent random variables such that $a_i \le X_i \le b_i$ almost surely. Let
    \[
    S_n = X_1 + \ldots + X_n.
    \]
    Then
    \[
    \Prob \left( |S_n - \E[S_n]| \ge t \right) \le
    2 \exp \left( - \frac{2 t^2}{\sum_{i = 1}^n (b_i - a_i)^2} \right).
    \]
\end{lemma}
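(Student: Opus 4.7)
The plan is to use the standard Chernoff (exponential Markov) technique combined with a moment generating function bound for bounded random variables. First I would center the variables by writing $Y_i = X_i - \E[X_i]$, so that $\E[Y_i] = 0$ and $Y_i$ lies in an interval of length $b_i - a_i$. For any $\lambda > 0$, applying Markov's inequality to $e^{\lambda(S_n - \E S_n)}$ gives
\[
\Prob(S_n - \E S_n \ge t) \le e^{-\lambda t}\,\E\!\left[\prod_{i=1}^{n} e^{\lambda Y_i}\right] = e^{-\lambda t}\prod_{i=1}^{n}\E\!\left[e^{\lambda Y_i}\right],
\]
where the last equality uses independence.

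The main technical step is then Hoeffding's lemma: if $Y$ is a mean-zero random variable supported in an interval of length $L$, then $\E[e^{\lambda Y}] \le \exp(\lambda^2 L^2 / 8)$. The standard way to establish this is to note that $e^{\lambda y}$ is convex, so on the interval $[a,b]$ it lies below the chord, which bounds $\E[e^{\lambda Y}]$ by a convex combination of $e^{\lambda a}$ and $e^{\lambda b}$. Writing this expression as $e^{\varphi(u)}$ where $u = \lambda(b-a)$, one computes $\varphi(0) = \varphi'(0) = 0$ and checks that $\varphi''(u) \le 1/4$, so a Taylor expansion yields $\varphi(u) \le u^2/8$, which is exactly the desired bound with $L = b_i - a_i$.

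Combining these, one obtains
\[
\Prob(S_n - \E S_n \ge t) \le \exp\!\left(-\lambda t + \frac{\lambda^2}{8}\sum_{i=1}^n (b_i - a_i)^2\right),
\]
and choosing $\lambda = 4t / \sum_i (b_i - a_i)^2$ minimizes the right-hand side and produces the bound $\exp\bigl(-2t^2/\sum_i (b_i-a_i)^2\bigr)$. The same argument applied to $-S_n$ handles the lower tail, and a union bound over the two events yields the factor of $2$. The only real obstacle is the convexity calculation behind Hoeffding's lemma; once that is in hand the rest is routine optimization.
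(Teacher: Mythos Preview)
Your proof is correct and is precisely the classical argument; the paper itself does not give a proof but simply cites Hoeffding's original paper \cite{hoeffding1963probability}, where this same Chernoff--convexity argument appears.
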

\begin{proof}
    See \cite[Theorem 2]{hoeffding1963probability}.
\end{proof}

\begin{proof}[Proof of Proposition \ref{g(n) final lower bound}]
In Lemma \ref{Kerr and Klurman Proposition 3.3} we fix $\varepsilon, \delta$ and $v = \max(e^{2 + 4\varepsilon}, 40000 \varepsilon^{-2})$. Let $v_0 = e^{2 + 4 \varepsilon}$. Then for sufficiently large $x$ 
\begin{multline*}
\Prob \left( \sum_{\substack{p \in \mathcal{P}_{\delta} \\ x^{1/v} \le p \le x}} \frac{1}{p} \le 1 + \varepsilon \right) \le
\Prob \left( \sum_{\substack{p \in \mathcal{P}_{\delta} \\ x^{1/v_0} \le p \le x}} \frac{1}{p} \le 1 + \varepsilon \right)  
\le \\
\Prob \left( \left | \sum_{x^{1/v_0} \le p \le x} \frac{f(p)}{p} \right| \ge \varepsilon \right) \ll \exp(-x^{1/v_0}).
\end{multline*}
Here we used Hoeffding's inequality (Lemma \ref{Hoeffding's inequality}). Hence the conditions of Lemma \ref{Kerr and Klurman Proposition 3.3} are satisfied with probability $1 - O(\exp(-x^{1/v_0}))$.
The equation (\ref{Kerr and Klurman Proposition 3.3 conclusion}) implies 
\[
\sum_{n \le x} g(n) \gg x \exp \left( \sum_{p \le x} \frac{f(p)}{p} \right),
\]
since $\varepsilon, \delta$ and $v$ are fixed.

\end{proof}

\begin{rem}
Let us sketch an alternative proof. 
A result of Tenenbaum \cite[Theorem 1.2]{tenenbaum2017moyennes} implies the following.
Let $g$ be a nonnegative multiplicative function such that $g(p^k) \le k$ for all primes $p$, and let $\varrho > 0, \varepsilon > 0$, and $x > x_0$. Suppose that, uniformly in $y$,
\begin{equation} \label{sum g(n) lower bound condition}
\sum_{p \le y} \frac{(g(p) - \varrho) \log p}{p} \ll \varepsilon \log y 
\quad
(x^{\varepsilon} < y \le x).
\end{equation}
Then
\begin{equation} \label{sum g(n) lower bound}
\sum_{n \le x} g(n) \gg \frac{x}{\log x} \exp \left( \sum_{p \le x} \frac{g(p)}{p}\right).
\end{equation}

It remains to show that (\ref{sum g(n) lower bound condition}) holds with admissible probability. This can be done using Lemma \ref{Hoeffding's inequality}.
\end{rem}

\subsection{Applying Rankin's trick} \label{steps like in Kerr and Klurman}

In this section we follow the main steps of the proof of \cite[Proposition 5.6]{kerr2022negative}.

Proposition \ref{g(n) final lower bound} implies that if $c_1$ is sufficiently small, then we have
\[
P_x \le \Prob \left( \left| \sum_{n \le x} f(n) \left\{ \frac{x}{n} \right\} \right| > c_1 x  \exp \left( \sum_{p \le x} \frac{f(p)}{p} \right) \right) + O(\exp(-x^{\beta})).
\]

Let
\begin{equation} \label{S moment def}
S := \E \left[  \left( x^{-1} \exp \left( -\sum_{p \le x} \frac{f(p)}{p} \right) \sum_{n \le x} f(n) \left\{ \frac{x}{n} \right\} \right)^q \right]^{1/q},
\end{equation}
where $q > 0$ is an even integer.

Suppose that for $q = q(x)$ we have $S = o(1)$.
Markov's inequality implies
\begin{equation} \label{P_x moment ineq}
P_x \le (c_1^{-1} S)^q + O(\exp(-x^{\beta})) \ll \exp(-q) + O(\exp(-x^{\beta})).
\end{equation}

That is why we want to prove that $S = o(1)$ for $q$ as large as possible.

Let $\frac{1}{\log_2 x} \ll \varepsilon = \varepsilon(x) = o(1)$.
We partition the summation over $n$ as
\[
\sum_{n \le x} f(n) \left\{ \frac{x}{n} \right\} = \sum_{\substack{nl \le x \\ n \in R(x, x^{\varepsilon}) \\ l \in S(x, x^{\varepsilon})}} f(l) f(n) \left\{ \frac{x}{n l} \right\} = 
\sum_{j \le \log x + 1} \sum_{\substack{nl \le x \\ n \in R(x, x^{\varepsilon}) \\ l \in S(x, x^{\varepsilon}) \\ e^j \le l < e^{j + 1}}} f(l) f(n) \left\{ \frac{x}{n l} \right\}.
\]

Let $h_1(x) = o(\varepsilon \log x)$ be a function to be chosen later.

Applying Minkowski's inequality, we obtain $S \le S_1 + S_2$, where
\[
S_1 = x^{-1} \sum_{j \le \log x - h_1(x)} \E \left[ 
\left( 
\exp \left( -\sum_{p \le x} \frac{f(p)}{p} \right)
\sum_{\substack{nl \le x \\ n \in R(x, x^{\varepsilon}) \\ l \in S(x, x^{\varepsilon}) \\ e^j \le l < e^{j + 1}}} f(l) f(n) \left\{ \frac{x}{n l} \right\}
\right)^q
\right]^{1/q},
\]
\[
S_2 = x^{-1} \sum_{\log x - h_1(x) < j \le \log x + 1}
\E \left[
\left(
\exp \left( -\sum_{p \le x} \frac{f(p)}{p} \right)
\sum_{\substack{nl \le x \\ n \in R(x, x^{\varepsilon}) \\ l \in S(x, x^{\varepsilon}) \\ e^j \le l < e^{j + 1}}} f(l) f(n) \left\{ \frac{x}{n l} \right\}
\right)^q
\right]^{1/q}.
\]

 We first evaluate $S_1$, following the method used to bound the analogous sum in the proof of \cite[Proposition 5.6]{kerr2022negative}. We use the majorant principle with Rankin's trick. By the majorant principle, we mean that when all coefficients $a_n \ge 0$, the moment
\[
\E \left[ \left( \sum_n a_n f(n) \right)^q \right] = \sum_{\substack{n_1, n_2, \ldots, n_q \\ n_1 n_2 \ldots n_q = \square}} a_{n_1} a_{n_2} \ldots a_{n_q}
\]
is monotone non-decreasing in each individual coefficient $a_n$.
Since in the expansion $\exp \left( -\sum_{p \le x} \frac{f(p)}{p} \right) = \sum_n a_n f(n)$ some of the coefficients $a_n$ are negative, which prevents us from using the majorant principle immediately. That is why we use a trivial upper bound
$\exp \left( -\sum_{p \le x} \frac{f(p)}{p} \right) \ll  \log x$. This provides

\[
S_1 \ll \frac{\log x}{x} \sum_{j \le \log x - h_1(x)} \E \left[ 
\left( 
\sum_{\substack{nl \le x \\ n \in R(x, x^{\varepsilon}) \\ l \in S(x, x^{\varepsilon}) \\ e^j \le l < e^{j + 1}}} f(l) f(n) \left\{ \frac{x}{n l} \right\}
\right)^q
\right]^{1/q}.
\]
Now we apply the majorant principle to obtain
\begin{equation} \label{S_1 eq 1}
S_1 \ll \frac{\log x}{x} \sum_{j \le \log x - h_1(x)} \E \left[ \left( 
\sum_{\substack{n \le x/e^j \\ n \in R(x, x^{\varepsilon})}} 
f(n)
\sum_{\substack{l \le e^{j + 1} \\ l \in S(x, x^{\varepsilon})}} 
f(l)
\right)^q \right]^{1/q}.
\end{equation}

Note that if $n \le x/e^j$ and $l \le e^{j + 1}$, then for $0 < \delta < 1$
\[
\left( \frac{x}{nl} \right) \left( \frac{e^j}{x} \right)^{\delta} n^{\delta} \ge e^{-1} \left(\frac{x}{n e^j}\right) \left( \frac{n e^j}{x} \right)^{\delta} \ge e^{-1}.
\]
This inequality, together with the majorant principle, gives us
\[
S_1 \ll (\log x) x^{- \delta} \sum_{j \le \log x - h_1(x)} e^{j \delta} 
\E \left[
\left( \sum_{\substack{n \le x/e^j \\ n \in R(x, x^{\varepsilon})}} \frac{f(n)}{n^{1 - \delta}}
\sum_{\substack{l \le e^{j + 1} \\ l \in S(x, x^{\varepsilon})}} \frac{f(l)}{l}
\right)^q
\right]^{1/q}.
\]

Hence
\begin{multline} \label{S_1 prefinal bound}
S_1 \ll (\log x) x^{- \delta}
\delta^{-1} e^{\delta(\log x - h_1(x))}
\E \left[ \left(
\sum_{n \in R(x, x^{\varepsilon})} \frac{f(n)}{n^{1 - \delta}} \sum_{l \in S(x, x^{\varepsilon})} \frac{f(l)}{l} \right)^q
\right]^{1/q} \ll
\\
(\log x) \delta^{-1} e^{-\delta h_1(x)} 
\E \left[ \left( \prod_{p > x^{\varepsilon}} \left( 1 - \frac{f(p)}{p^{1 - \delta}} \right)^{-1 } \right)^q \right]^{1/q} 
\E \left[ \left( 
\prod_{p \le x^{\varepsilon}}
\left( 1 - \frac{f(p)}{p} \right)^{-1}
\right)^q \right]^{1/q}.
\end{multline}

Let $|z| < 1/2$. For all such $z$ we have 
\[
\log(1 + z) \ge -z^2 + z.
\]
Thus
\begin{equation} \label{elementary inequality log}
 (1 + z)^{-q} \le \exp\left( q z^2 \right) \exp(-qz).   
\end{equation}

Suppose that $\delta \le 1/3$. Applying inequality (\ref{elementary inequality log}) twice for $z = p^{\delta - 1}$ and $z = -p^{\delta - 1}$, we obtain for $p \ge 3$

\begin{multline*}
\E \left[ \left( 1 - \frac{f(p)}{p^{1 - \delta}} \right)^{-q} \right] = 
\frac{1}{2} \left(
\left( 1 + \frac{1}{p^{1 - \delta}} \right)^{-q} + \left(
1 - \frac{1}{p^{1 - \delta}}
\right)^{-q}
\right) \le 
\\
\exp(q p^{-4/3}) \frac{\exp(q / p^{1 - \delta}) + \exp(-q/p^{1 - \delta})}{2} \le 
\exp(q p^{-4/3}) \exp \left( \frac{q^2}{2 p^{2 - 2\delta}} \right).
\end{multline*}

The last inequality follows from
\[
\frac{e^z + e^{-z}}{2} \le e^{\frac{z^2}{2}},
\]
which holds for all $z \in \mathbb{R}$.

Thus
\begin{equation} \label{S_1 rough E}
\E \left[ \left( \prod_{p > x^{\varepsilon}} \left( 1 - \frac{f(p)}{p^{1 - \delta}} \right)^{-1 } \right)^q \right] \ll
\exp \left( \frac{c_2 q^2}{\varepsilon (\log x) x^{(1 - 2 \delta) \varepsilon}} 
+ O(q)
\right),
\end{equation}

Also
\begin{multline} \label{S_1 smooth E}
\E \left[ \left( 
\prod_{p \le x^{\varepsilon}}
\left( 1 - \frac{f(p)}{p} \right)^{-1}
\right)^q \right]
\ll \exp\left(q \sum_{p \le x^{\varepsilon}} \frac{1}{p} + O(q) \right) 
\ll \\
\exp \left( q \log \log x + q \log \varepsilon + O(q) \right).
\end{multline}

Combining (\ref{S_1 prefinal bound}), (\ref{S_1 rough E}) and (\ref{S_1 smooth E}), we deduce
\begin{multline} \label{S_1 final bound}
S_1 \ll (\log x) \delta^{-1} e^{-\delta h_1(x)} 
\exp
\left( 
\frac{c_2 q}{\varepsilon (\log x) x^{(1 - 2 \delta) \varepsilon}} + \log \log x + \log \varepsilon
\right) \ll 
\\
 \frac{\varepsilon}{\delta}  \left(\log x \right)^2 
e^{-\delta h_1(x)} \exp \left( \frac{c_2 q}{\varepsilon (\log x) x^{(1 - 2 \delta)\varepsilon}} \right).
\end{multline}

Now let us estimate $S_2$. There exists $j_0$ satisfying $\log x - h_1(x) < j_0 \le \log x + 1$ such that
\begin{equation} \label{S_2 eq 1}
S_2 \ll x^{-1} h_1(x) 
\E \left[
\left(
 \exp \left( -\sum_{p \le x} \frac{f(p)}{p} \right)
\sum_{\substack{nl \le x \\ n \in R(x, x^{\varepsilon}) \\ l \in S(x, x^{\varepsilon}) \\ e^{j_0} \le l < e^{j_0 + 1}}} f(l) f(n) \left\{ \frac{x}{n l} \right\}
\right)^q
\right]^{1/q}.
\end{equation}

In (\ref{S_2 eq 1}) we have $n \ll e^{h_1(x)} = o(x^{\varepsilon})$. But $n \in R(x, x^{\varepsilon})$ and thus $n = 1$.

We conclude that 
\[
S_2 \ll x^{-1} h_1(x) 
\E \left[
\left(
\exp \left( -\sum_{p \le x} \frac{f(p)}{p} \right)
\sum_{\substack{l \in S(x, x^{\varepsilon}) \\ e^{j_0} \le l < e^{j_0 + 1}}} f(l)  \left\{ \frac{x}{l} \right\}
\right)^q
\right]^{1/q},
\]
and hence
\begin{equation} \label{S_2 prefinal bound}
S_2 \ll x^{-1} h_1(x) \, \sup_{f} \left| 
\exp \left( -\sum_{p \le x} \frac{f(p)}{p} \right)
\sum_{\substack{l \in S(x, x^{\varepsilon}) \\ e^{j_0} \le l < e^{j_0 + 1}}} f(l)  \left\{ \frac{x}{l} \right\} \right|,
\end{equation}
where the supremum is over the set of completely multiplicative functions that take values in $\{ 1, -1 \}$.

Let us denote
\[
\Psi_f(x, y) := \sum_{n \in S(x, y)} f(n).
\]

We will provide an upper bound on $\Psi_f(x, y)$.

\subsection{Upper bound on $\Psi_f(x, y)$}

\begin{prop} \label{Psi_f proposition}
Let $f(n)$ be a completely multiplicative function such that $|f(n)| \le 1$ for all $n$.
Let $a \ge 0$ and $u_x := \frac{\log x}{\log y}$.

Let $y \ge 2$ and suppose that uniformly for $y^{a} \le t \le y^{a + 1}$ we have
\begin{equation}\label{Psi_f upper bound assumption}
|\Psi_f(t, y)| \le c_f(y) \rho \left( u_t \right) t.
\end{equation}
Also suppose that $c_f(y) \gg y^{-1/7}$.

Then for any $\varepsilon > 0$ uniformly in the range $x \ge y^a$, $\log y \ge (\log_2 x)^{5/3 + \varepsilon}$, we have
\begin{equation} \label{Psi_f upper bound}
|\Psi_f(x, y)| \le c_f(y) x \rho(u_x) \left( 1 + O_{\varepsilon} \left( \frac{u_x \log(u_x + 1)}{\log x} \right) \right).
\end{equation}
\end{prop}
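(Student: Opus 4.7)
The plan is to run an iterative (Buchstab--de Bruijn) extension of the hypothesis from $[y^a, y^{a+1}]$ to all $x \ge y^a$, using a functional equation derived from $\log n = \sum_{d \mid n}\Lambda(d)$. Summing $f(n)\log n$ over $n \in S(x,y)$ and applying partial summation to move the factor $\log n$ off the left-hand side gives
\begin{equation*}
\Psi_f(x,y)\log x = \int_1^x \Psi_f(t,y)\,\frac{dt}{t} + \sum_{\substack{p^k \le x\\ p \le y}} \Lambda(p^k)\,f(p^k)\,\Psi_f(x/p^k,y).
\end{equation*}
The Dickman identity $u\rho(u)=\int_{u-1}^u\rho(s)\,ds$ translates, via $s=\log t/\log y$, into the smooth counterpart $\rho(u_x)\log x = \int_{x/y}^x \rho(u_t)\,dt/t$, and, combined with Mertens' theorem $\sum_{p \le y}(\log p)/p = \log y + O(1)$, shows that the target majorant $c_f(y)\,x\,\rho(u_x)$ satisfies the same functional equation up to admissible errors.

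The inductive step runs on $b \ge a$: assume $|\Psi_f(t,y)| \le c_f(y)\,t\,\rho(u_t)\bigl(1+E(u_t)\bigr)$ for every $t \le y^b$, and extend to $[y^b, y^{b+1}]$. I would split the prime-power sum into the $k = 1$ piece and the $k \ge 2$ tail. The $k = 1$ piece, estimated with the inductive bound and converted from a prime sum to an integral by partial summation against $\theta(t) = t + O(t/\exp(\sqrt{\log t}))$, reproduces $c_f(y)\,x\int_{x/y}^x \rho(u_t)\,dt/t = c_f(y)\,x\,\rho(u_x)\log x$ up to a Mertens/PNT remainder of size $O(1/\log y)$ per step. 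The $k \ge 2$ tail is dominated by $x\sum_{k \ge 2, p}(\log p)/p^k \ll x$, which after normalising by $c_f(y)\,x\,\rho(u_x)\log x$ is absorbed thanks to $c_f(y)\gg y^{-1/7}$. The integral $\int_1^x \Psi_f(t,y)\,dt/t$ is split at $y^a$: below $y^a$ the trivial bound $|\Psi_f(t,y)| \le t$ contributes harmlessly given the lower bound on $c_f(y)$, and above $y^a$ the inductive hypothesis matches the Dickman integral after Taylor expanding $\rho(u_t)$ around $\rho(u_x)$.

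Each step introduces an additive error of order $O(1/\log y)$ (from Mertens/PNT) plus $O(\log(u_x+1)/\log y)$ (from local oscillation of $\rho$, using $|\rho'(u)/\rho(u)| = \rho(u-1)/(u\rho(u)) \ll \log(u+1)$). Summing the per-step error over the at most $u_x - a$ iterations and dividing by the factor $\log x = u_x \log y$ sitting in front of $\Psi_f(x,y)$ in the functional equation yields the claimed total error $O(u_x\log(u_x+1)/\log x)$. The main obstacle will be keeping this accumulation under control: the condition $\log y \ge (\log_2 x)^{5/3+\varepsilon}$ is precisely what is needed so that both the $y^{-1/7}$ tolerance that absorbs the prime-power tail and the $u_x$-fold compounding of the per-step errors remain $o(1)$, in line with the saddle-point analysis of $\rho$ underlying the Hildebrand--Tenenbaum theory of $\Psi(x,y)$.
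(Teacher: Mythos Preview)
Your functional identity and the overall iterative strategy (Hildebrand's induction on $u$) are exactly right and match the paper. However, the inductive step as you describe it has a genuine gap.

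When $x \in [y^b, y^{b+1}]$ and $p$ is a small prime (say $p \le \sqrt y$), the argument $x/p$ still lies in $(y^b, x]$, which is \emph{outside} the range where your inductive hypothesis is available. The same circularity occurs in the integral $\int_1^x \Psi_f(t,y)\,dt/t$, which involves $t$ all the way up to $x$. So you cannot simply ``estimate the $k=1$ piece with the inductive bound'': a positive proportion of that piece refers back to the very quantity you are trying to control. If you allow $\bar E(u) := \sup_{a \le v \le u} E(v)$ on the right-hand side to absorb this, the Dickman identity makes the coefficient in front of $\bar E(u)$ essentially $1$, and the recursion collapses to the useless inequality $\bar E(u) \le \bar E(u)(1+O(1/\log y)) + O(\cdots)$.

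The paper fixes this with a further split of the prime sum at $p=\sqrt y$. Primes $p \le \sqrt y$ (for which $x/p$ lies in the ``current'' window $[y^{u-1/2}, y^u]$) carry weight
\[
\alpha(u) \;=\; \frac{1}{u\,\rho(u)}\int_{u-1/2}^{u}\rho(t)\,dt \;\le\; \tfrac12,
\]
since $\rho$ is nonincreasing, while primes $\sqrt y < p \le y$ push $x/p$ into $[y^{u-1}, y^{u-1/2}]$ with weight $1-\alpha(u)\ge \tfrac12$. Writing $\Delta^{**}(u)=\bar E(u)$, this yields a genuine contraction
\[
\Delta^{**}(u) \;\le\; \tfrac12\,\Delta^{**}(u) + \tfrac12\,\Delta^{**}(u-\tfrac12) + O_\varepsilon\!\left(\frac{1+\Delta^{**}(u)}{u\log y}\right),
\]
hence $\Delta^{**}(u) \le \Delta^{**}(u-\tfrac12) + O_\varepsilon(1/(u\log y))$, which telescopes to $O_\varepsilon(\log(u+1)/\log y) = O_\varepsilon(u\log(u+1)/\log x)$. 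Without this (or an equivalent) contraction device, the iteration does not close. A secondary point: for the portions of the integral and of the sum that are genuinely in the past (the integral over $[1,y^{u-1}]$ and the prime powers $p^m>y$), the paper uses the sharper trivial bound $|\Psi_f| \le \Psi(t,y) \sim t\,\rho(u_t)$ rather than $|\Psi_f|\le t$; this is what makes the hypothesis $c_f(y)\gg y^{-1/7}$ sufficient to absorb those terms after dividing through by $c_f(y)$.
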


In the following Lemma we collect the properties of the Dickman function that we will need.

\begin{lemma}\label{Dickman function}
(i) $\rho(u) u = \int_{u - 1}^u \rho(t) \, dt \,\, (u \ge 1)$,

(ii) Uniformly for $y \ge 1.5$ and $1 \le u \le \sqrt{y}$ we have
\[
\int_0^{u} \rho(u - t) y^{-t} \, dt \ll \frac{\rho(u)}{\log y},
\]

(iii) 
Uniformly for $y \ge 1.5$ and $1 \le u \le \sqrt{y}$ we have
\[
\int_1^{u} \rho(u - t) y^{-t} \, dt \ll \frac{\rho(u)}{(\log y) y^{1/3}},
\]

(iv) 
Uniformly for $y \ge 1.5$ and $1 \le u \le y^{1/4}$ we have
\[
\sum_{\substack{y < p^m \le y^u \\ p \le y}}
\frac{\log p}{p^m} \rho \left(
u - \frac{\log p^m}{\log y}\right)
\ll \rho(u) \frac{\log y}{y^{1/6}}.
\]

(v) 
For every fixed $\varepsilon > 0$ and uniformly for $y \ge 1.5, u \ge 1$ and $0 \le \theta \le 1$ we have
\begin{multline*}
\sum_{p^m \le y^{\theta}} \frac{\log p}{p^m} \rho \left( u - \frac{\log p^m}{\log y} \right) = (\log y) \int_{u - \theta}^{u} \rho(t) \, dt \, + \\
O_{\varepsilon} (\rho(u) \left\{ 1 + u \log^2(u + 1) \exp (- (\log y)^{3/5 - \varepsilon}) \right\}).
\end{multline*}

\end{lemma}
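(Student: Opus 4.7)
The plan is to verify each of the five parts separately; these are classical estimates for the Dickman function, and I expect the author's proof will primarily consist of references to Tenenbaum's \emph{Introduction to Analytic and Probabilistic Number Theory} (Chapter III.5) and to Hildebrand-Tenenbaum's survey on smooth numbers. None requires new ideas, so I would content myself with sketching the main input for each.

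For (i), I would differentiate both sides. Using the defining equation $u\rho'(u) = -\rho(u-1)$, one computes $(u\rho(u))' = \rho(u) + u\rho'(u) = \rho(u) - \rho(u-1)$, and the derivative of $\int_{u-1}^{u}\rho(t)\,dt$ is the same by the fundamental theorem of calculus. Agreement at $u=1$, where both sides equal $1$, finishes the argument.

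For (ii) and (iii), I would exploit the quasi-exponential decay of $\rho$: for small $t$ we have $\rho(u-t) = \rho(u)\exp(O(t\log(u+1)))$, and for larger $t$ the crude bound $\rho(u-t) \le \rho(u)\, u^{t}$ is still good enough when $u \le \sqrt{y}$ because of the competing factor $y^{-t}$. The integrals then concentrate near $t = O(1/\log y)$, yielding $\rho(u)/\log y$ for (ii). Moving the lower limit from $0$ to $1$ in (iii) costs an extra factor $y^{-1}$ from $y^{-t}$, which, after accounting for $\rho(u-1)/\rho(u) \asymp u$ and the admissible polynomial loss in $u$, still leaves the stated $y^{-1/3}$ savings.

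For (iv) and (v), the strategy is the same: rewrite the sum as a Stieltjes integral $\int F(t)\,d\vartheta^{*}(t)$ with $\vartheta^{*}(t) = \sum_{p^{m} \le t}\log p / p^{m}$, invoke the prime number theorem with Korobov-Vinogradov error $\exp(-(\log y)^{3/5-\varepsilon})$ to approximate $\vartheta^{*}(t)$ by $\log t$, and then use (i) to collapse the resulting integral into $(\log y)\int_{u-\theta}^{u}\rho(t)\,dt$ as in (v). In (iv), the condition $p^{m} > y$ forces the argument of $\rho$ to lie below $u-1$, so a dyadic splitting of the range $y < p^{m} \le y^{u}$ together with the one-step decay of $\rho$ produces the $y^{-1/6}$ gain. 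I expect the main technical obstacle to be (v): the error analysis requires careful control of the modulus of continuity of $\rho$ against the PNT error, and this interaction is what produces the factor $u\log^{2}(u+1)$ in the final error term.
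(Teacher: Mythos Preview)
Your proposal is correct and indeed anticipates the paper's approach: the paper's proof consists entirely of citations to Hildebrand's 1986 paper \emph{On the number of positive integers $\le x$ and free of prime factors $> y$} (Lemmas~1--4 there), with no argument given in the text. Your sketches of the underlying arguments are accurate and in fact supply more detail than the paper does; the only discrepancy is the choice of reference (you point to Tenenbaum's book and the Hildebrand--Tenenbaum survey, whereas the paper cites Hildebrand~1986 directly), which is immaterial.
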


\begin{proof}
For (i) see, for example, \cite[Lemma 1 (ii)]{hildebrand1986number}.
(ii) is \cite[Lemma 2]{hildebrand1986number}, and (iii) easily follows from the proof of \cite[Lemma 2]{hildebrand1986number}.
(iv) follows from the proof of \cite[Lemma 3]{hildebrand1986number}, and (v) is \cite[Lemma 4]{hildebrand1986number}.
\end{proof}

\begin{proof}[Proof of Proposition \ref{Psi_f proposition}]
The formula 
\begin{equation} \label{Psi approximation}
\Psi(x, y) = x \rho(u) \left( 1 + O_{\varepsilon} \left( \frac{ u \log(u + 1)}{\log x} \right) \right),
\end{equation}
where $\Psi(x, y) := |S(x, y)|$
was proved by Hildebrand \cite[Theorem 1]{hildebrand1986number} in the range $\log y \ge (\log_2 x)^{5/3 + \varepsilon}$.

The proof uses the identity
\begin{equation} \label{Psi Buchstab alternative}
\Psi(x, y) \log x = \int_1^x \frac{\Psi(t, y)}{t} \, dt + \sum_{\substack{p^m \le x \\ p \le y}} \Psi \left( \frac{x}{p^m}, y \right) \log p.
\end{equation}

The estimate is derived by an inductive argument provided by (\ref{Psi Buchstab alternative}).

Let $S = \sum_{n \in S(x, y)} f(n) \log n$. Integrating by parts, we obtain
\[
S = \Psi_f(x, y) \log x - \int_{1}^{x} \frac{\Psi_f(t, y)}{t} \, dt.
\]
On the other hand
\[
S = \sum_{n \in S(x, y)} f(n) \sum_{p^m | n} \log p = 
\sum_{\substack{p^m \le x \\ p \le y}} f(p^m) \Psi_f \left( \frac{x}{p^m}, y \right) \log p.
\]
Here we used that by assumption $f$ is completely multiplicative.

Hence the analog of (\ref{Psi Buchstab alternative}) is
\[
\Psi_f(x, y) \log x = \int_{1}^x \frac{\Psi_f(t, y)}{t} dt + \sum_{\substack{p^m \le x \\ p \le y}} f(p^m) \Psi_f \left( \frac{x}{p^m}, y \right) \log p,
\]
which implies
\begin{equation} \label{Psi_f Buchstab alternative}
|\Psi_f(x, y)| \log x \le \int_1^x \frac{|\Psi_f(t, y)|}{t} \, dt + \sum_{\substack{p^m \le x \\ p \le y}} \left|\Psi_f \left( \frac{x}{p^m}, y \right) \right| \log p.
\end{equation}

For $u \ge a$ let $\Delta(y, u)$ be the minimal nonnegative real number such that the inequality
\[
|\Psi_f(y^u, y)| \le c_f(y) y^{u} \rho(u) (1 + \Delta(y, u))
\]
holds. Also denote $\Delta^*(y, u) := \sup_{\max(a, u - 1) \le u' \le u} \Delta(y, u')$, which is well defined for $u \ge a$. Finally let us denote $\Delta^{**}(y, u) := \sup_{a \le u' \le u} \Delta(y, u')$.
We will prove by induction that $\Delta^{**}(y, u) \ll_{\varepsilon} \log(u + 1) / \log y$.

By the assumption (\ref{Psi_f upper bound assumption}) we have $\Delta(y, u) = 0 \,\, (a \le u \le a + 1)$.

The inequality (\ref{Psi_f Buchstab alternative}) and the trivial upper bound $|\Psi_f(t, y)| \le \Psi(t, y)$ imply that for $a + 1 \le u \le \exp((\log y)^{3/5 - \varepsilon})$ we have
\begin{multline*}
\frac{|\Psi_f(y^u, y)|}{\rho(u) y^{u}} \le 
\frac{1}{\rho(u) y^{u} \log y^u} \int_{y^{u - 1}}^{y^u} \frac{|\Psi_f(t, y)|}{t} \, dt \, + \, 
\frac{1}{\rho(u) y^{u} \log y^u} \int_{1}^{y^{u-1}} \frac{\Psi(t, y)}{t} \, dt 
\, + \\
\frac{1}{\rho(u) y^u \log y^u} \sum_{\substack{\sqrt{y} < p^m \le y}} \left| \Psi_f\left( \frac{y^u}{p^m}, y \right) \right| \log p
\, + \\
\frac{1}{\rho(u) y^u \log y^u} \sum_{\substack{p^m \le \sqrt{y}}} \left| \Psi_f\left( \frac{y^u}{p^m}, y \right) \right| \log p
+ 
\frac{1}{\rho(u) y^u \log y^u} 
\sum_{\substack{y < p^m \le y^u \\ p \le y}} \Psi\left( \frac{y^u}{p^m}, y \right) \log p,
\end{multline*}

Now we use the definition of $\Delta(y, u)$ and formula (\ref{Psi approximation}) to obtain

\begin{multline} \label{Psi_f eq 0}
\frac{|\Psi_f(y^u, y)|}{\rho(u) y^{u}} \le 
\frac{c_f(y) (1 + \Delta^*(y, u))}{\rho(u) y^{u} \log y^u} \int_{y^{u - 1}}^{y^u} \rho \left( \frac{\log t}{\log y} \right) \, dt \, + \,
\frac{O(1)}{\rho(u) y^{u} \log y^u} \int_{1}^{y^{u-1}} \rho \left( \frac{\log t}{\log y} \right) \, dt \,
+ \\
\frac{c_f(y) (1 + \Delta^*(y, u - 1/2))}{\rho(u) \log y^u} \sum_{\substack{\sqrt{y} < p^m \le y}} 
\frac{\log p}{p^m} \rho \left( u - \frac{\log p^m}{\log y} \right)
\, + \\
\frac{c_f(y) (1 + \Delta^*(y, u))}{\rho(u) \log y^u} \sum_{\substack{p^m \le \sqrt{y}}} 
\frac{\log p}{p^m} \rho \left( u - \frac{\log p^m}{\log y} \right)
\, + \,
\frac{O(1)}{\rho(u) \log y^u} 
\sum_{\substack{y < p^m \le y^u \\ p \le y}} 
\frac{\log p}{p^m} \rho \left( u - \frac{\log p^m}{\log y} \right).
\end{multline}

By Lemma \ref{Dickman function} (ii) we have
\begin{equation} \label{Psi_f eq 1}
\int_{y^{u - 1}}^{y^u} \rho \left( \frac{\log t}{\log y} \right) dt = (\log y) y^u \int_{0}^1 \rho(u - \tau) y^{-\tau} \, d\tau \ll y^u \, \rho(u).
\end{equation}

Lemma \ref{Dickman function} (iii) implies
\begin{equation} \label{Psi_f eq 2}
\int_{1}^{y^{u-1}} \rho \left( \frac{\log t}{\log y} \right) \, dt = (\log y) y^u \int_{1}^u \rho(u - \tau) y^{-\tau} \, d\tau \ll \frac{y^u \rho(u)}{y^{1/3}}.
\end{equation}

By part (i) of Lemma \ref{Dickman function},
\begin{equation} \label{Psi_f eq alpha}
1 = \frac{1}{\rho(u) u} \int_{u - 1/2}^u \rho(t) \, dt + \frac{1}{\rho(u) u} \int_{u-1}^{u - 1/2} \rho(t) \, dt =: \alpha(u) + (1 - \alpha(u)).
\end{equation}

Part (v) of Lemma \ref{Dickman function}, equation (\ref{Psi_f eq alpha}) and the assumption $u \le \exp((\log y)^{3/5 - \varepsilon})$ imply
\begin{multline} \label{Psi_f eq 3}
\sum_{\substack{\sqrt{y} < p^m \le y}} 
\frac{\log p}{p^m} \rho \left( u - \frac{\log p^m}{\log y} \right) = (\log y) \int_{u - 1}^{u - \frac{1}{2}} \rho(t) \, dt + \\
O_{\varepsilon}\left(\rho(u) \left\{ 1 + u \log^2(u + 1) \exp(-(\log y)^{3/5 - \varepsilon/2}) \right\} \right) = \\
\rho(u) \log y^u \left( (1 - \alpha(u)) + O_{\varepsilon}\left( \frac{1}{\log y^u} \right) \right).
\end{multline}

In exactly the same way we get
\begin{equation} \label{Psi_f eq 4}
\sum_{\substack{p^m \le \sqrt{y}}} 
\frac{\log p}{p^m} \rho \left( u - \frac{\log p^m}{\log y} \right) = \rho(u) \log y^u \left( \alpha(u) + O_{\varepsilon}\left( \frac{1}{\log y^u} \right) \right).
\end{equation}

Applying (\ref{Psi_f eq 1}), (\ref{Psi_f eq 2}), (\ref{Psi_f eq 3}), (\ref{Psi_f eq 4}) and Lemma \ref{Dickman function} (iv) to the corresponding terms on the right-hand side of (\ref{Psi_f eq 0}), we derive an estimate

\begin{multline*}
\frac{|\Psi_f(y^u, y)|}{\rho(u) y^{u}} \le 
\frac{c_f(y) O(1) (1 + \Delta^*(y, u))}{u \log y} \, + \,
O \left( \frac{1}{u y^{1/3}} \right)
\, + \\
c_f(y) (1 + \Delta^*(y, u - 1/2)) \left( (1 - \alpha(u)) \, + \, O_{\varepsilon} \left( \frac{1}{u \log y} \right)\right) \, + 
\\
c_f(y) (1 + \Delta^*(y, u)) \left( \alpha(u) + O_{\varepsilon} \left( \frac{1}{u \log y} \right)\right) \,
+ \,
O \left( \frac{1}{u y^{1/6}} \right).
\end{multline*}

Since $\rho(u)$ is a nonincreasing function of $u$, we have $\alpha(u) \le (1 - \alpha(u))$ and hence $\alpha(u) \le 1/2$.

We obtain
\begin{equation} \label{Psi 1/2 eq}
\frac{|\Psi_f(y^u, y)|}{\rho(u) y^{u}} \le
c_f(y) \left( 1 + \frac{1}{2}\Delta^{**}(y, u) + \frac{1}{2}\Delta^{**}(y, u - 1/2)  + 
O_{\varepsilon} \left( \frac{(1 + \Delta^{**}(y, u))}{u \log y} \right) \right).
\end{equation}

By changing the constant in $O_{\varepsilon}$, we can assume that the right-hand side of (\ref{Psi 1/2 eq}) is an upper bound for $\Delta^{*}(y, u)$.

We find that
\[
\Delta^{**}(y, u) \left(1 + O_{\varepsilon} \left( \frac{1}{u \log y} \right) \right) \le \Delta^{**}(y, u - 1/2) + O_{\varepsilon}\left( \frac{1}{u \log y} \right).
\]
By induction
\[
\Delta^{**}(y, u) \ll_{\varepsilon} \frac{\log(u + 1)}{\log y}.
\]

This finishes the proof of Proposition \ref{Psi_f proposition}.
\end{proof}

\subsection{Corollary from Halász's theorem}

\begin{lemma} \label{sum_{p < x} to sum_p} \,

(i)
Let $|a_p| \le 1$ for all $p$ and $\alpha = 1 + \frac{1}{\log x}$. Then
\[
\sum_{p \le x} \frac{a_p}{p} = \sum_p \frac{a_p}{p^{\alpha}} + O(1).
\]

(ii) 
Let $\operatorname{Re}(s) > 1$ and let $F(s) := \sum_n \frac{f(n)}{n^s}$, where $f$ is a multiplicative function taking values in the unit disk and such that $f(2) = 0$. Then
\[
\log F(s) = \sum_{p} \frac{f(p)}{p^s} + O(1).
\]

(iii) Let $s = \sigma + i t$. In the region $\sigma \ge 1, |t| \ge 2$
\[
\frac{1}{\zeta(s)} \ll (\log |t|)^7.
\]
\end{lemma}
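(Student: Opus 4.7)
The plan is to handle each of the three parts separately, as they are technically independent standard facts.

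For part (i), I would split $\sum_p a_p p^{-\alpha} = \sum_{p \le x} a_p p^{-\alpha} + \sum_{p > x} a_p p^{-\alpha}$ and compare the first piece to $\sum_{p \le x} a_p/p$. The inner difference is controlled by $|p^{-1} - p^{-\alpha}| = p^{-1}(1 - p^{-1/\log x}) \le (\log p)/(p \log x)$ via the elementary inequality $1 - e^{-y} \le y$; summing against $|a_p| \le 1$ and applying Mertens' theorem $\sum_{p \le x} (\log p)/p = \log x + O(1)$ yields $O(1)$. For the tail, partial summation with Chebyshev's $\pi(t) \ll t/\log t$ reduces matters to $\int_x^\infty dt/(t^\alpha \log t)$; the substitution $v = (\log t)/\log x$ converts this to $\int_1^\infty e^{-v}/v\,dv = O(1)$.

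For part (ii), I would start from the Euler product $F(s) = \prod_p (1 + A_p(s))$, where $A_p(s) := \sum_{k \ge 1} f(p^k) p^{-ks}$; this converges absolutely for $\sigma = \operatorname{Re}(s) > 1$, and $|A_p(s)| \le (p^\sigma - 1)^{-1} < 1$, so the principal branch gives $\log(1 + A_p) = A_p + O(|A_p|^2)$. Separating off the $k=1$ term as $A_p = f(p) p^{-s} + B_p$ with $|B_p| \le \sum_{k \ge 2} p^{-k\sigma} \le 2 p^{-2\sigma}$, both the prime power remainder and the $|A_p|^2$ term sum over $p$ to $O(\sum_p p^{-2\sigma}) = O(1)$ (small primes contribute a bounded amount since there are only finitely many). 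Summing yields the claim.

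For part (iii), this is a classical estimate on the Riemann zeta function; I would appeal to Titchmarsh's treatise \emph{The Theory of the Riemann Zeta-Function}. The usual route combines the nonvanishing of $\zeta$ on $\operatorname{Re}(s) = 1$ — obtained from the cubic inequality $|\zeta^3(\sigma)\zeta^4(\sigma+it)\zeta(\sigma+2it)| \ge 1$ — with the convexity bound $\zeta(\sigma + it) \ll \log |t|$ and the derivative bound $\zeta'(\sigma+it) \ll (\log|t|)^2$, extracting the exponent $7$ via the standard interpolation between $\sigma = 1$ and a nearby $\sigma > 1$.

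The main obstacle, if one insisted on a self-contained argument, would lie in part (iii); parts (i) and (ii) are essentially direct consequences of Mertens' theorem and the Euler product expansion, so I would present them in full and simply cite Titchmarsh for (iii).
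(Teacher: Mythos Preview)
Your proposal is correct and aligns with the paper's approach: the paper simply remarks that (i) follows from Chebyshev's upper bound on $\pi(t)$, that (ii) is trivial, and that (iii) is \cite[Section 3.6]{titchmarsh1986theory}. You have supplied exactly the standard details behind these one-line justifications, so there is nothing to correct or contrast.
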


\begin{proof}
    (i) follows from  Chebyshev's upper bound on the prime counting function. For (iii) see \cite[Section 3.6]{titchmarsh1986theory}. See \cite[Section 6.19]{titchmarsh1986theory} for a better upper bound. To prove (ii) note that for $\operatorname{Re}(s) > 1$
    \begin{multline*}
    \log F(s) = \sum_p \log \left(1 + \sum_{k \ge 1} \frac{f(p^{k})}{p^{ks}} \right) = \sum_{p} \frac{f(p)}{p^s} + \\
    \sum_p \sum_{k \ge 2} \frac{f(p^k)}{p^{ks}} + \sum_{p} O \left( \left( \sum_{k \ge 1} \frac{f(p^{k})}{p^{ks}} \right)^2 \right) 
    = \sum_{p} \frac{f(p)}{p^s} + O(1).
    \end{multline*}
    Here we used that $\left|\sum_{k \ge 1} \frac{f(p^k)}{p^{ks}} \right| \le 1/2$ for all $p$ and $\log(1 + x) = x + O(x^2)$ in the range $|x| \le 1/2$.
\end{proof} 

The following lemma is a form of Halász's theorem by Granville, Harper and Soundararajan  \cite{granville2018more}.

\begin{lemma}[Halász's theorem] \label{Halász}
Let $f$ be a multiplicative function such that $|f(n)| \le 1$ for all $n$. Let 
\[
F_x(s) := \prod_{p \le x} \left(
1 + \sum_{k = 1}^{\infty} \frac{f(p^k)}{p^{ks}}
\right).
\]
Let 
\[
L(x) := \left( \sum_{|N| \le (\log x)^2 + 1} \frac{1}{N^2 + 1} \sup_{|t - N| \le 1/2} |F_x(1 + it)|^2 \right)^{1/2}.
\]

Then

(i) 
\[
\sum_{n \le x} f(n) \ll x \frac{L(x)}{\log x} \log \left( 100 \frac{\log x}{L(x)} \right) + x \frac{\log_2 x}{\log x}.
\]

(ii) If the multiplicative function $f(n)$ is supported only on $Cx^{0.99}$-smooth numbers, then 
\[
\sum_{n \le x} f(n) \ll \frac{x}{\log x} (L(x) + 1).
\]
\end{lemma}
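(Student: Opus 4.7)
The plan is to follow the framework of Granville, Harper, and Soundararajan, combining Perron's formula with mean value estimates for $F_x(s)$ on the line $\operatorname{Re}(s)=1$. First I would apply Perron's formula in the form
\[
\sum_{n \le x} f(n) = \frac{1}{2\pi i} \int_{c-iT}^{c+iT} F_x(s)\,\frac{x^s}{s}\,ds + (\text{truncation error}),
\]
with $c = 1 + 1/\log x$ and a suitable cutoff $T$. This is valid because $\sum_{n \le x} f(n)$ only sees integers whose prime factors are all $\le x$, so $F_x$ is the correct generating series.

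Next I would shift the contour to $\operatorname{Re}(s) = 1$; there is no polar contribution, since $F_x$ is a finite Euler product and hence entire. The shifted integral is $\frac{x}{2\pi}\int_{-T}^{T} F_x(1+it)\,\frac{x^{it}}{1+it}\,dt$. Partitioning the $t$-range into unit intervals centred at integers $N$ and bounding each integrand by $\sup_{|t-N|\le 1/2}|F_x(1+it)|$, the weights $(N^2+1)^{-1}$ arise naturally from $|1+it|^{-2}$. For part~(i), I would apply Cauchy--Schwarz in this decomposition and split dyadically according to the magnitude of $|F_x(1+it)|$; this produces a bound of the shape $x L(x)(\log x)^{-1}\log(100\log x/L(x))$. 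The additional $x\log_2 x/\log x$ absorbs the Perron truncation and the contribution of those $f$ for which a single prime $p > x^{1/2}$ carries most of the mass — such cases demand a separate Ramar\'e-type convolution identity, since the direct approach cannot handle $f$ concentrated on integers with one large prime factor.

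For part~(ii), the hypothesis that $f$ is supported on $x^{0.99}$-smooth integers removes precisely the obstruction addressed by the Ramar\'e step: no large prime can dominate any $n \le x$ in the support. I would then apply Perron at scale $x^{0.99}$ (or iterate the Hal\'asz bound self-referentially), making the $\log_2 x$ loss disappear and yielding the sharper $\ll x(L(x)+1)/\log x$.

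The main obstacle will be the passage from what the Perron integral naturally produces — an $L^2$-type average $\int_{|t-N|\le 1/2}|F_x(1+it)|^2\,dt$ — to the supremum appearing in the definition of $L(x)$. For this one needs an appropriate mean value theorem for Dirichlet polynomials together with a smoothing of the cutoff $n \le x$; obtaining the correct $(\log x)^{-1}$ saving (rather than the weaker $(\log x)^{-1/2}$ that Parseval alone would give) is exactly what the refined Hal\'asz-type inequality of Granville--Harper--Soundararajan provides, and this is the step one must import from their paper rather than reprove.
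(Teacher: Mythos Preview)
Your treatment of part~(i) ultimately reduces, as you concede in the last paragraph, to importing the Granville--Harper--Soundararajan bound; the paper does exactly this and simply cites their Theorem~1. Two small corrections to your narrative: first, in the GHS argument the convolution identity (writing $f(n)\log n$ via $f(p)\log p\cdot f(m)$) is not an auxiliary step for an edge case but the \emph{source} of the $(\log x)^{-1}$ saving --- direct Perron plus Cauchy--Schwarz on the line $\operatorname{Re}(s)=1$ only yields $\ll xL(x)$, and the extra $\log p$ weight is what produces the $1/\log x$. Second, $F_x$ is not entire: each Euler factor is holomorphic only for $\operatorname{Re}(s)>0$; this suffices for the contour shift you describe, but the stated reason is wrong.

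For part~(ii) the paper's route is more concrete than your sketch and does not proceed via ``Perron at scale $x^{0.99}$''. Following the GHS proof of~(i) one has
\[
\sum_{n\le x} f(n)=\frac{1}{\log x}\sum_{k=1}^{10} S_k(x)
+\frac{1}{\log x}\sum_{p\le \log^4 x} f(p)\log p\sum_{m\le x/p} f(m)
+O\!\left(\frac{x}{\log x}\sum_{\log^4 x<p\le x^{0.99}}\frac{\log p}{p\log(x/p)}\right)+O\!\left(\frac{x}{\log x}\right),
\]
with $S_k(x)\ll xL(x)+x$ already established in GHS. The smoothness hypothesis enters only to cap the middle error at $O(x/\log x)$ (the sum over $p\le x^{0.99}$ is $O(1)$, whereas without the cap it would be $\log_2 x$). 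The remaining small-prime tail is handled by feeding part~(i) back in: for $p\le\log^4 x$ one has $F_{x/p}(s)\asymp F_x(s)$ on $\operatorname{Re}(s)\ge 1$, hence $L(x/p)\ll L(x)$, so each inner sum is $\ll \frac{x}{p}\frac{L(x)+1}{\log x}\log_2 x$ and the whole tail is $\ll x(L(x)+1)(\log_2 x)^2/\log x$. Your phrase ``iterate the Hal\'asz bound self-referentially'' points at this, but the mechanism is specifically to apply~(i) inside the small-prime sum, not to rerun Perron at a different scale.
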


\begin{proof}
Part (i) is \cite[Theorem 1]{granville2018more}. The proof of (ii) is sketched in \cite[Remark 3.2]{granville2018more}. 
Let us discuss the details.

Following the proof of \cite[Theorem 1]{granville2018more}, one deduces
\begin{multline*}
\sum_{n \le x} f(n) = \frac{1}{\log x}\sum_{k = 1}^{10} S_k(x) + O \left( \frac{x}{\log x} \sum_{\log^4 x < p \le C x^{0.99}} \frac{\log p}{p \log(x/p)} \right)+ \\
\frac{1}{\log x} \sum_{p \le \log^4 x} f(p) \log p \sum_{m \le x/p} f(m) + O\left( \frac{x}{\log x} \right),
\end{multline*}
where
\[
S_k(x) = \sum_{\substack{pqn \le x \\ x^{1 - e^{1-k}} < p \le x^{1 - e^{-k}}}} \frac{f(p) \log p}{\log(x/p)} f(n)f(q) \log q.
\]
It is proved in \cite{granville2018more} that $S_k(x) \ll x L(x) + x$. Also note that 
\[
\frac{x}{\log x} \sum_{\log^4 x < p \le C x^{0.99}} \frac{\log p}{p \log(x/p)} \ll \frac{x}{\log x}.
\]
Thus it is enough to prove that
\[
 \sum_{p \le \log^4 x} f(p) \log p \sum_{m \le x/p} f(m) \ll x L(x) + x.
\]
For $p \le \log^4 x$ we have $F_x(s) \asymp F_{x/p}(s)$ if $\operatorname{Re}(s) \ge 1$. Hence $L(x/p) \ll L(x)$ and Lemma \ref{Halász}(i) implies that
\[
 \sum_{p \le \log^4 x} f(p) \log p \sum_{m \le x/p} f(m) \ll 
 \sum_{p \le \log^4 x} \log p  \frac{x (L(x)+1)}{p \log x} \log_2(x) \ll x (L(x) + 1) \frac{(\log_2 x)^2}{\log x}.
\]
This finishes the proof of part (ii).
\end{proof}

\begin{prop} \label{sum f(n) upper bound in terms of sum f(p)/p}
Let $x \ge 1$ and let $f$ be a real-valued multiplicative function supported on $C x^{0.99}$-smooth numbers and such that $|f(n)| \le 1$ for all $n$. Then
\[
\sum_{n \le x} f(n) \ll x \exp \left( \sum_{p \le x} \frac{f(p)}{p} \right).
\]
\end{prop}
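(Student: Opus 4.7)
The plan is to apply the version of Halász's theorem stated in Lemma \ref{Halász}(ii), which yields
\[
\sum_{n \le x} f(n) \ll \frac{x}{\log x}(L(x) + 1).
\]
So it suffices to prove that $L(x) \ll (\log x) \exp(\sum_{p \le x} f(p)/p)$; the conclusion then follows at once, since $\exp(\sum_{p\le x}f(p)/p) \gg 1/\log x$ (because $\sum_{p \le x} f(p)/p \ge -\log_2 x + O(1)$).

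The key pointwise input for the bound on $L(x)$ will be the inequality
\[
\log|F_x(1+it)| \le \sum_{p \le x}\frac{f(p)}{p} + \log_2 x - \operatorname{Re}\sum_{p \le x}\frac{1}{p^{1+it}} + O(1).
\]
I plan to derive this in two steps. First, expanding $\log$ in each local factor gives $\log|F_x(1+it)| = \sum_{p \le x} f(p)\cos(t\log p)/p + O(1)$. Second, I use the elementary inequality $(1+f(p))\cos(t\log p) \le 1+f(p)$, which is valid because $1+f(p) \ge 0$ and $\cos \le 1$. Rearranging yields $f(p)\cos(t\log p) \le 1 + f(p) - \cos(t\log p)$, and summing over $p \le x$ produces the displayed bound.

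Next I would convert $\operatorname{Re}\sum_{p\le x}1/p^{1+it}$ into an estimate involving $|\zeta(1+it)|$. For $|t| \le 1/\log x$ a Taylor expansion shows that the sum equals $\log_2 x + O(1)$, and the trivial bound $|F_x(1+it)| \ll \exp(\sum_p f(p)/p)$ follows. For $|t| \ge 1/\log x$ I would invoke Lemma \ref{sum_{p < x} to sum_p}(ii) applied to $\zeta(s)$, together with an easy partial-summation estimate for the tail $\sum_{p > x}1/p^{1+it}$ (which is $O(1)$ in this range), to obtain $\operatorname{Re}\sum_{p\le x}1/p^{1+it} = \log|\zeta(1+it)| + O(1)$. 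Combining this with $|\zeta(1+it)| \gg 1/|t|$ near the pole and with Lemma \ref{sum_{p < x} to sum_p}(iii) for $|t| \ge 2$, I expect to get
\[
|F_x(1+it)| \ll (\log x)\bigl(\log(|t|+2)\bigr)^{7} \exp\left(\sum_{p\le x}\frac{f(p)}{p}\right)
\]
uniformly for $|t| \ge 1/\log x$.

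Finally I would substitute into the definition of $L(x)$ and split the sum over $N$ according to $|N| \le 1$ and $|N| \ge 2$; the convergence of $\sum_N (\log N)^{14}/N^2$ makes the estimate $L(x)^2 \ll (\log x)^2 \exp(2\sum_p f(p)/p)$ immediate. The main technical issue will be handling the transition near $|t| \sim 1/\log x$ cleanly, since this is where the two estimates for $\operatorname{Re}\sum_{p \le x}1/p^{1+it}$ (Taylor expansion versus $\zeta$-based) meet; once this is in place, the remaining steps are routine bookkeeping with the explicit $\zeta$-bounds already available in Lemma \ref{sum_{p < x} to sum_p}.
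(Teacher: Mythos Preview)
Your proposal is correct and follows essentially the same route as the paper: both apply Lemma~\ref{Halász}(ii), reduce to bounding $|F_x(1+it)|$, exploit the same pointwise inequality (the paper phrases it as ``$\operatorname{Re}\frac{f(p)}{p}(p^{-it}-1)$ is maximized at $f(p)=-1$'', which is algebraically identical to your $(1+f(p))\cos\theta\le 1+f(p)$), and then control the resulting $\sum_{p\le x}(1-\cos(t\log p))/p$ via $\zeta$ and Lemma~\ref{sum_{p < x} to sum_p}(iii). The only minor difference is that the paper shifts to $\sigma=1+1/\log x$ via Lemma~\ref{sum_{p < x} to sum_p}(i) and bounds $|\zeta(1+1/\log x+it)|^{-1}$ there, whereas you work directly on $\operatorname{Re}(s)=1$; the paper's shift is slightly cleaner since Lemma~\ref{sum_{p < x} to sum_p}(ii) is stated only for $\operatorname{Re}(s)>1$ and it avoids your tail estimate and the transition at $|t|\sim 1/\log x$ altogether.
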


\begin{proof}
By $\nu_2(n)$ we denote the largest integer $v$ such that $2^v | n$. Since
\[
\sum_{n \le x} f(n) = \sum_{\substack{n \le x \\ \nu_2(n) \ne 1}} f(n) + f(2) \sum_{\substack{n \le x/2 \\ (n, 2) = 1}} f(n),
\]
it suffices to prove Proposition \ref{sum f(n) upper bound in terms of sum f(p)/p} for $f$ with $f(2) = 0$. 

Lemma \ref{sum_{p < x} to sum_p} (ii) implies that $|F_x(s)| \asymp \exp \left(\sum_{p \le x} \operatorname{Re} \frac{f(p)}{p^s} \right)$ in the region $\operatorname{Re}(s) \ge 1$.
Then Lemma \ref{Halász} implies

\begin{equation} \label{exp(-sum f(p)/p sum f(n)}
\exp \left( - \sum_{p \le x} \frac{f(p)}{p} \right) \sum_{n \le x} f(n) \ll \frac{x}{\log x} H(x) + x,
\end{equation}
where 
\[
H(x) = 
\left( \sum_{|N| \le (\log x)^2 + 1} \frac{1}{N^2 + 1} \sup_{|t - N| \le 1/2} \exp \left( 2 \sum_{p \le x} \operatorname{Re} \frac{f(p)}{p} (p^{-it} - 1) \right) \right)^{1/2}.
\]

It is clear that $\operatorname{Re} \frac{f(p)}{p} \left( p^{- it} - 1 \right)$ is maximized when $f(p) = -1$. Hence

\begin{multline*}
H(x) \ll 
\left( \sum_{|N| \le (\log x)^2 + 1} \frac{1}{N^2 + 1} \sup_{|t - N| \le 1/2} \exp \left( 2 \sum_{p \le x} \operatorname{Re} \frac{(1 - p^{-it})}{p}  \right) \right)^{1/2} \ll
\\
(\log x) \left( \sum_{|N| \le (\log x)^2 + 1} \frac{1}{N^2 + 1} \sup_{|t - N| \le 1/2} \zeta^{-2}\left( 1 + \frac{1}{\log x} + i t \right) \right)^{1/2},
\end{multline*}

where we used Lemma \ref{sum_{p < x} to sum_p} (i), (ii).

Lemma \ref{sum_{p < x} to sum_p}(iii) implies that 
\[
 \sum_{|N| \le (\log x)^2 + 1} \frac{1}{N^2 + 1} \sup_{|t - N| \le 1/2} \zeta^{-2}\left( 1 + \frac{1}{\log x} + i t \right) = O(1).
\]
Thus $H(x) \ll \log x$. This finishes the proof in view of (\ref{exp(-sum f(p)/p sum f(n)}).
\end{proof}

\subsection{The end of the proof of Theorem \ref{Kerr and Klurman improvement}}

Let $y = x^{\varepsilon}$ and $\frac{\log_4 x}{\log_3 x} \ll \varepsilon < 1/2$.

Proposition \ref{sum f(n) upper bound in terms of sum f(p)/p} implies that for $y^{2} \le t \le y^3$ we have
\begin{equation} \label{c_f(y) with sum f(p)/p}
\sum_{n \in S(t, y)} f(n) \ll  \varepsilon^{-1} t \exp \left( \sum_{p \le x} \frac{f(p)}{p} \right).
\end{equation}

We see that the conditions of Proposition \ref{Psi_f proposition} are satisfied with $a = 2$ and
\[
c_f(y) = O \left(\varepsilon^{-1}  \exp \left( \sum_{p \le x} \frac{f(p)}{p}  \right) \right).
\]

Hence
\[
|\Psi_f(t, y)| \ll c_f(y) t \rho \left( \frac{\log t}{\log y} \right),
\]
for $y^2 \le t \le x$, if $\log y \ge (\log_2 x)^{5/3 + \varepsilon}$.

Let $\log x - h_1(x) < j \le \log x + 1$.
Integrating by parts, we obtain
\begin{multline} \label{sum f(l) {x/l} bound}
\sum_{\substack{l \in S(x, x^{\varepsilon}) \\ e^j \le l < e^{j + 1}}} f(l)  \left\{ \frac{x}{l} \right\} = 
\int_{e^{j}}^{e^{j + 1}} \left\{ \frac{x}{t} \right\} \, d\Psi_{f}(t, y) + O(1) \ll
\\
\frac{x}{e^{j}} \sup_{t \in [e^j, e^{j + 1}]} |\Psi_{f}(t, y)| + O(1) \ll c_f(y) x \rho \left(\varepsilon^{-1} - 
\varepsilon^{-1}\frac{h_1(x)}{\log x} \right) + O(1).
\end{multline}

We use (\ref{S_2 prefinal bound}), (\ref{sum f(l) {x/l} bound}) and the upper bound $\rho(u) \ll \exp( - (1 + o(1))u\log u )$ (see, for example, \cite[Corollary 2.3]{hildebrand1993integers}) 
to obtain
\begin{equation} \label{S_2 final bound}
S_2 \ll h_1(x) \varepsilon^{-1} 
\exp \left( - (1 + o(1)) \varepsilon^{-1} \log \varepsilon^{-1} \right).
\end{equation}

Now recall (\ref{S_1 final bound}). Let us choose $q = \varepsilon (\log x) x^{(1 - 2\delta) \varepsilon} + O(1)$, $\delta = (\log_3 x)^{-1}$, $h_1(x) = 10(\log_2 x) (\log_3 x)$. Finally let 
\[
\varepsilon = \frac{\log_4 x}{(1 + o(1)) \log_3 x},
\]
where $o(1)$ is chosen in such a way that $S_2 = o(1)$.

Hence for
\[
q = \exp \left( \frac{\log x \log_4 x}{(1 + o(1)) \log_3 x} \right),
\]
we have $S_1 = o(1), S_2 = o(1)$ and thus $S = o(1)$.

This finishes the proof of Theorem \ref{Kerr and Klurman improvement} in view of (\ref{P_x moment ineq}). \qed

\subsection{
Proof of Theorem \ref{cor from conj}
}

We do the same steps as in the proof of Theorem \ref{Kerr and Klurman improvement}, but we use  Conjecture \ref{smooth Halasz conj} instead of Proposition \ref{sum f(n) upper bound in terms of sum f(p)/p}.

In the notation of the proof of Theorem \ref{Kerr and Klurman improvement} this gives us 
\[
S_2 \ll \frac{h_1(x)  \varepsilon^{-1} 
\exp \left( - (1 + o(1)) \varepsilon^{-1} \log \varepsilon^{-1} \right)}{\log_2 x},
\]
where $\varepsilon$ is small enough.

Inequality (\ref{P_x moment ineq}) states that $P_x \le (c_1^{-1} S)^q + O(\exp(-x^{\beta}))$ for some fixed $c_1 > 0, \beta > 0$. 

Let us take $q = \varepsilon (\log x) x^{(1 - 2\delta) \varepsilon} + O(1)$, $\delta = 1/10$, $h_1(x) = 100 \log_2 x$, and $\varepsilon > 0$ to be a fixed constant such that $S_2 < c_1 / 3$. 

Our choice of variables implies that $S_1 = o(1)$, in view of (\ref{S_1 final bound}). Therefore
\[
P_x \ll \exp(-q) + \exp(-x^{\beta}) \ll \exp(-x^{\alpha}), 
\]
where $\alpha = \min(\beta, (8/10)\varepsilon)$. \qed

\subsection{
Proof of Proposition \ref{high moment sum f(n) ineq}}

Let 
\[
S' := \E \left[ \left( \sum_{n \le x} f(n) \right)^q \right]^{1/q}.
\]

Let $h_1(x) = o(\log x)$, $\varepsilon = o(\log x), \delta > 0$.
Following the steps of the proof of Theorem \ref{Kerr and Klurman improvement} (section \ref{steps like in Kerr and Klurman}), we obtain $S' \le S_1' + S_2'$, where
\[
S_1' \ll x \frac{\varepsilon}{\delta} (\log x) e^{-\delta h_1(x)} \exp \left( \frac{c_2 q}{\varepsilon (\log x) x^{(1 - 2 \delta) \varepsilon}} \right),
\quad
S_2' \ll h_1(x) \sup_{f} \left| \sum_{\substack{l \in S(x, x^{\varepsilon}) \\ e^{j_0} \le l < e^{j_0 + 1}}} f(l) \left\{ \frac{x}{l} \right\} \right|.
\]
Here $\log x - h_1(x) < j_0 \le \log x + 1$. Hence $S_2' \ll h_1(x) \Psi(x, x^{\varepsilon})$.

For $q \le \varepsilon (\log x) x^{(1 - 2 \delta) \varepsilon}$ this gives us
\begin{equation} \label{S' final bound}
S' \ll x \left( h_1(x) \exp(-(1 + o(1)) \varepsilon^{-1} \log \varepsilon^{-1}) \, + \, 
\frac{\varepsilon}{\delta} (\log x) e^{- \delta h_1(x)} \right).
\end{equation}

Let us choose $\delta = (\log_3 x)^{-1}$, $h_1(x) = (10 + \beta_0) (\log_2 x) (\log_3 x)$. Finally let
\[
\varepsilon = \frac{\log_3 x}{(\beta(x) + o(1))\log_2 x},
\]
where $o(1)$ is chosen in such a way that (\ref{S' final bound}) gives $S' = o\left( \frac{x}{(\log x)^{\beta(x)}} \right)$.

This finishes the proof of Proposition \ref{high moment sum f(n) ineq}. \qed

\section{Proof of Theorem \ref{chi(n)/n}}

\begin{lemma}[Elliott] \label{Second moment ineq}
 Let $b_1, b_2, \ldots$ be a sequence of complex numbers such that $\forall i \, |b_i| \le 1$. Then
\[
\E \left[ \left( \sum_{n \le y} b_n \chi_p(n) \right)^2 \right] \ll (\log x) y \log y + \frac{\log x}{x} y^3 \log y.
\]
\end{lemma}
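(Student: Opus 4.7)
The natural approach is to expand the square, using that $\chi_p$ is real-valued, to obtain
\[
\E\Bigl[\Bigl(\sum_{n\le y}b_n\chi_p(n)\Bigr)^2\Bigr]=\frac{1}{\pi(2x)-\pi(x)}\sum_{m,n\le y}b_mb_n\sum_{x<p\le 2x}\chi_p(mn),
\]
and then split the outer sum according to whether $k:=mn$ is a perfect square.

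For the diagonal ($k$ a square): since $p>x\ge y$ exceeds every prime factor of $mn\le y^2$, we have $\chi_p(mn)=1$, so the inner prime sum equals $\pi(2x)-\pi(x)$. Parametrizing $m=ab^2$, $n=ac^2$ with $a$ squarefree and using $\sum_{a\le y}y/a\ll y\log y$ shows that the number of such pairs is $\ll y\log y$. With $|b_mb_n|\le 1$, this yields a contribution of $O(y\log y)$ to the expectation, comfortably absorbed by the $(\log x)y\log y$ term.

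For the off-diagonal ($k$ non-square): I would invoke quadratic reciprocity to identify $p\mapsto \chi_p(k)$ with a non-principal real Dirichlet character $\psi_k$ of conductor at most $4k\le 4y^2$. Pólya--Vinogradov gives $|\sum_{n\le N}\psi_k(n)|\ll\sqrt{k}\log k$; converting to a sum over primes (for instance by applying partial summation to $\sum_{n\le N}\psi_k(n)\Lambda(n)$, whose prime-power contribution is $O(\sqrt{x})$) leads to a bound of the shape
\[
\Bigl|\sum_{x<p\le 2x}\chi_p(k)\Bigr|\ll\sqrt{k}\log(kx).
\]
Combined with $|b_mb_n|\le 1$ and $\sum_{m,n\le y}\sqrt{mn}\ll y^3$, the off-diagonal contribution to the expectation, after dividing by $\pi(2x)-\pi(x)\asymp x/\log x$, is $\ll (\log x)y^3\log y/x$, matching the second term of the claim.

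The main obstacle is controlling $\sum_{x<p\le 2x}\chi_p(k)$ with the correct power of $\log x$: raw Pólya--Vinogradov bounds integer sums, and the standard device of passing to $\Lambda$-weighted sums and then recovering primes via partial summation must be arranged so that only a single factor of $\log x$ is lost. Identifying the conductor of $\psi_k$ from the residue of $k$ modulo $8$ (to handle the signs in reciprocity and ensure $\psi_k$ is non-principal precisely when $k$ is non-square) is routine but must be done carefully.
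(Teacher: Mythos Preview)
Your diagonal treatment is fine, but the off-diagonal argument has a genuine gap. You claim that one can pass from Pólya--Vinogradov on integer sums to
\[
\Bigl|\sum_{x<p\le 2x}\chi_p(k)\Bigr|\ll\sqrt{k}\,\log(kx)
\]
via partial summation with $\Lambda$-weights. This bound is simply false: take $k$ fixed and let $x\to\infty$; your inequality would force the character sum over primes in $(x,2x]$ to be $O(\log x)$, whereas it is genuinely of size $\asymp x/\log x$ on arithmetic progressions of density $1/\varphi(k)$. Pólya--Vinogradov controls $\sum_{n\le N}\psi_k(n)$, not $\sum_{n\le N}\psi_k(n)\Lambda(n)$, and no amount of partial summation converts one into the other without an independent input on primes in progressions (which, for moduli as large as $y^2$, you do not have).

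The missing idea---and this is exactly Elliott's device, reproduced in the paper's proof of the higher-moment Lemma~\ref{high moment inequality}---is to use \emph{positivity before expanding}. After splitting $n$ into four classes (by the $2$-adic valuation parity and $n_1\bmod 4$) so that quadratic reciprocity gives $\bigl(\tfrac{n}{p}\bigr)=\varepsilon_j\bigl(\tfrac{p}{n}\bigr)$ with a class-dependent sign, one has
\[
\sum_{x<p\le 2x}\Bigl|\sideset{}{_j}\sum_{n\le y}b_n\Bigl(\tfrac{n}{p}\Bigr)\Bigr|^2
=\sum_{x<p\le 2x}\Bigl|\sideset{}{_j}\sum_{n\le y}b_n\Bigl(\tfrac{p}{n}\Bigr)\Bigr|^2
\le\sum_{x<m\le 2x}\Bigl|\sideset{}{_j}\sum_{n\le y}b_n\Bigl(\tfrac{m}{n}\Bigr)\Bigr|^2.
\]
Now expand the square: the off-diagonal inner sum is $\sum_{x<m\le 2x}\bigl(\tfrac{m}{n_1n_2}\bigr)$ over \emph{integers} $m$, to which Pólya--Vinogradov applies directly, giving $\ll y\log y$ per pair and hence $\ll y^3\log y$ in total. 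Dividing by $\pi(2x)-\pi(x)\asymp x/\log x$ yields the stated bound. The crucial point is that the extension from primes to integers happens at the level of the nonnegative quantity $|\cdot|^2$, not after expansion.
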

\begin{proof}
    This follows from \cite[Lemma 10]{elliott1970mean} if we note that
    \[
    \sum_{\substack{m, n \le y \\ m n = \square}} |b_n b_m| \ll y \log y.
    \]
\end{proof}

\begin{lemma} \label{high moment inequality}
    Let $b_1, b_2, \ldots$ be a sequence of complex numbers such that $\forall i \, |b_i| \le 1$.
    Let $h(y)$ be a function such that $0 < h(y) \ll (\log y)^{o(1)}$ and let $q$ be an even positive integer. Then
    \[
    \E \left[ \left( \sum_{n \le y} b_n \chi_p(n) \right)^q \right] 
    \ll
    (\log x) o \left( \frac{y}{h(y) \log y} \right)^q +
    \frac{\log x}{x} (\log y^q) (4 y^{3/2})^q
    \]
    for all 
    \begin{equation} \label{q range}
    q \le \exp \left( \frac{\log y \log_3 y}{(1 + o_h(1)) \log_2 y} \right).
    \end{equation}
\end{lemma}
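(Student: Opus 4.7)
The plan is to extend Elliott's second-moment argument (Lemma \ref{Second moment ineq}) to the $q$-th moment, using Proposition \ref{high moment sum f(n) ineq} in place of a trivial count of diagonal tuples. Expand
\[
\E\Bigl[\Bigl(\sum_{n \le y} b_n \chi_p(n)\Bigr)^q\Bigr] = \sum_{n_1, \ldots, n_q \le y} b_{n_1} \cdots b_{n_q}\, \E_p[\chi_p(N)], \qquad N := n_1 n_2 \cdots n_q,
\]
and split according to whether $N$ is a perfect square.

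For the diagonal contribution ($N = \square$), one has $\chi_p(N) = 1$ whenever $p \nmid N$, so $\E_p[\chi_p(N)] = 1 + O(\omega(N)/\pi(x))$ with negligible error. The key identity
\[
\sum_{\substack{n_1 \cdots n_q = \square \\ n_i \le y}} b_{n_1} \cdots b_{n_q} = \E_f\Bigl[\Bigl(\sum_{n \le y} b_n f(n)\Bigr)^q\Bigr]
\]
holds because $\E_f[f(n_1) \cdots f(n_q)] = \mathds{1}_{N = \square}$ for a random completely multiplicative $f \in \{\pm 1\}$. Using $|b_n| \le 1$ and the triangle inequality term-by-term, the diagonal is bounded in absolute value by $\E_f[(\sum_{n \le y} f(n))^q]$. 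Applying Proposition \ref{high moment sum f(n) ineq} with $\beta_0 = 1$ and $\beta(x) := 1 + \log h(y)/\log_2 y$ — a choice that lies in the $1 + o(1)$ regime because $h(y) = (\log y)^{o(1)}$, and that satisfies $(\log y)^{\beta(x)} = h(y)\log y$ — yields $\E_f[(\sum_{n \le y} f(n))^q]^{1/q} = o(y/(h(y)\log y))$, uniformly for $q$ in the stated range $q \le \exp(\log y \log_3 y/((1 + o_h(1))\log_2 y))$. Raising to the $q$-th power and incorporating the $(\log x)$ normalization factor that arises from the prime averaging (as in Elliott's lemma) gives the first summand in the asserted bound.

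For the off-diagonal ($N = d k^2$ with $d > 1$ squarefree), one has $\chi_p(N) = \bigl(\tfrac{d}{p}\bigr)$, which by quadratic reciprocity is a nontrivial real character in $p$ modulo $4|d|$. A Polya–Vinogradov style estimate for prime character sums gives $\bigl|\sum_{x < p \le 2x} \chi_p(N)\bigr| \ll \sqrt{N}\,\log(Nx)$ uniformly in $N \le y^q$. Dividing by $\pi(2x) - \pi(x) \asymp x/\log x$ and using $|b_{n_1}\cdots b_{n_q}| \le 1$, the off-diagonal contributes at most
\[
\frac{\log x \cdot \log(y^q)}{x}\Bigl(\sum_{n \le y} \sqrt{n}\Bigr)^q \le \frac{\log x}{x}\,\log(y^q)\,(4 y^{3/2})^q,
\]
since $\sum_{n \le y} \sqrt{n} \le 4 y^{3/2}$ for large $y$, matching the second summand. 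The main obstacle is the precise calibration in the diagonal step: one must apply Proposition \ref{high moment sum f(n) ineq} with $\beta(x)$ sitting exactly in the $\beta_0 + o(1)$ window (with $\beta_0 = 1$), which is exactly what produces the $h(y)$ savings and the $(1 + o_h(1))$ factor in the admissible $q$-range. The off-diagonal estimate is routine extension of Elliott's lemma; the only technical point is uniformity of the character-sum bound over moduli up to $y^q$, which is harmless because the allowed $q$ satisfies $y^q \ll x^{o(1)}$ so that $\sqrt{N}\log N$ remains a small perturbation relative to $x$.
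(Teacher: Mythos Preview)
Your diagonal treatment is essentially the same as the paper's: the identity $\sum_{N=\square}b_{n_1}\cdots b_{n_q}=\E_f\bigl[(\sum b_nf(n))^q\bigr]$, the termwise bound by $\E_f\bigl[(\sum f(n))^q\bigr]$, and the appeal to Proposition~\ref{high moment sum f(n) ineq} with $\beta(x)=1+\log h(y)/\log_2 y$ are exactly what the paper does.

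The off-diagonal step, however, has a genuine gap. You assert a ``P\'olya--Vinogradov style estimate for prime character sums''
\[
\Bigl|\sum_{x<p\le 2x}\Bigl(\tfrac{d}{p}\Bigr)\Bigr|\ll \sqrt{N}\,\log(Nx),
\]
but no such bound is available unconditionally. P\'olya--Vinogradov bounds $\sum_{m\le x}\chi(m)$ over \emph{all} integers; for sums over \emph{primes} one has only Siegel--Walfisz (which requires modulus $\le\log^A x$ and gives saving $\exp(-c\sqrt{\log x})$, not $\sqrt{d}$), or GRH-conditional bounds. Your claimed estimate, with $N\ll x^{o(1)}$, would give cancellation down to $x^{o(1)}$---far beyond what is known. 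Moreover, the hypothesis $y^q\ll x^{o(1)}$ is not part of the lemma; it is imposed only later in the application via the separate restriction $q\le E(y_i)$.

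The paper avoids the prime character sum entirely: it uses quadratic reciprocity to flip $(\frac{n}{p})$ to $\pm(\frac{p}{n})$ (which forces the four-class decomposition according to the $2$-adic valuation and residue mod~$4$, since the sign depends on these), then exploits positivity of $|\cdot|^q$ to extend the sum over primes $p\in(x,2x]$ to all integers $m\in(x,2x]$. At that point $m\mapsto(\frac{m}{n_1\cdots n_q})$ is a genuine nonprincipal character (unless $n_1\cdots n_q$ is $\square$ or $2\square$), and the \emph{standard} P\'olya--Vinogradov inequality gives $\ll y^{q/2}\log(y^q)$. Summing over the at most $y^q$ tuples and combining with the $4^{q-1}$ from Jensen produces the $(4y^{3/2})^q$ term. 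The reciprocity-then-extend trick is the missing idea in your argument.
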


\begin{proof}
    We follow the proof of \cite[Lemma 10]{elliott1970mean}.
    We have
    \[
    \E \left[ \left( \sum_{n \le y} b_n \chi_p(n) \right)^q \right] \ll \frac{\log x}{x} \sum_{x < p \le 2x} 
    \left| \sum_{n \le y} b_n \left(\frac{n}{p}\right) \right|^{q}.
    \]
    We extend the definition of the Legendre symbol by
    \[
    \left( \frac{m}{2} \right)_* := 
    \begin{cases}
    1 \,\, \text{if} \,\, 2 \nmid m, \\
    0 \,\, \text{if} \,\, 2 \,|\, m,
    \end{cases}
    \]
    and
    \[
    \left( \frac{m}{n} \right)_* := \prod_{p^{\alpha} || n} \left( \frac{m}{p} \right)^{\alpha}.
    \]
    Note that this definition differs from the usual definition of the Kronecker symbol.

    Let $n = 2^{\eta} n_1, 2 \nmid n_1$. We divide all integers $n$ into four classes according to the parity of $\eta$, and whether $n_1 \equiv 1$ or $n_1 \equiv 3 \pmod{4}$. Let us denote by $\Sigma_j \,\, (j = 1, \ldots, 4)$ the summation over a particular class.

    The quadratic reciprocity law implies that for any odd integer $m$ we have
    \[
    \left( \frac{n}{m} \right)_*
    =
    \varepsilon \left( \frac{m}{n} \right)_*,
    \]
    where $\varepsilon = \pm 1$ and depends only on $m$ and the class of $n$.

    Hence
    \[
    \left| \sideset{}{_j}\sum_{n \le y} b_n \left( \frac{n}{m} \right)_* \right| = 
    \left| \sideset{}{_j}\sum_{n \le y} b_n \left( \frac{m}{n} \right)_* \right|.
    \]

    Jensen's inequality applied to the function $\varphi(z) = z^q$ implies that
    \[
    \sum_{x < p \le 2x} 
    \left| \sum_{n \le y} b_n \left(\frac{n}{p}\right) \right|^{q} \le 
    4^{q - 1} \sum_{j = 1}^4 
    \sum_{x < p \le 2x} 
    \left| \sideset{}{_j}\sum_{n \le y} b_n \left( \frac{n}{p} \right) \right|^{q}.
    \]

    It was shown in the proof of \cite[Lemma 10]{elliott1970mean} that
    \[
    m \mapsto \left( \frac{m}{n} \right)
    \]
    defines a non-principal character unless $n$ or $\frac{1}{2}n$ is a perfect square.

    Hence
    \begin{multline*}
    \sum_{x < p \le 2x} 
    \left| \sideset{}{_j}\sum_{n \le y} b_n \left( \frac{n}{p} \right) \right|^{q} =
    \sum_{x < p \le 2x} 
    \left| \sideset{}{_j}\sum_{n \le y} b_n \left( \frac{p}{n} \right) \right|^{q} 
    \le
    \sum_{x < m \le 2x} 
    \left| \sideset{}{_j}\sum_{n \le y} b_n \left( \frac{m}{n} \right)_* \right|^{q} 
    \ll
    \\
    x \sideset{}{_j}\sum_{\substack{n_1, \ldots, n_q \le y \\ n_1 \ldots n_q = \square, 2 \square}} b_{n_1} \ldots b_{n_{q/2}} \overline{b_{n_{q/2 + 1}} \ldots b_{n_q}} +
    \sideset{}{_j}\sum_{\substack{n_1, \ldots, n_q \le y \\ n_1 \ldots n_q \ne \square, 2 \square}} b_{n_1} \ldots b_{n_{q/2}} \overline{b_{n_{q/2 + 1}} \ldots b_{n_q}} \sum_{m \le x}\left( \frac{m}{n_1 \ldots n_q} \right)_*.
    \end{multline*}

Proposition \ref{high moment sum f(n) ineq} implies that
\[
\sum_{\substack{n_1, \ldots, n_q \le y \\ n_1 \ldots n_q = \square, 2 \square}} 1 \le (q + 1) 
\sum_{\substack{n_1, \ldots, n_q \le y \\ n_1 \ldots n_q = \square}} 1 = 
(q + 1) \E \left[ \left( \sum_{n \le y} f(n) \right)^q \right] = o \left( \frac{y}{h(y) \log y} \right)^q,
\]
for $q$ in range (\ref{q range}).

If $n_1 \ldots n_q \ne \square, 2 \square$, then the  P\'olya-Vinogradov inequality gives
\[
\sum_{m \le x}\left( \frac{m}{n_1 \ldots n_q} \right)_* \ll 
y^{q/2} \log(y^q).
\]

Combining all these estimates, we obtain the desired inequality after redefining
the function $h(y)$. 
\end{proof}

Let $M$ be a subset of $[1, \infty)$. Denote by $\tilde{P}_x(M)$ the probability that for any $y \in M$ we have $\sum_{n \le y} \frac{\chi_p(n)}{n} > 0$ and by $P(M)$ the probability that for any $y \in M$ we have $\sum_{n \le y} \frac{f(n)}{n} > 0$, where $f$ is a random completely multiplicative function.

\begin{lemma} \label{P_x([N, infty)]) bound}
For 
\[
\exp\left( \frac{\log_3 x \log_4 x}{o(1) \log_5 x} \right) \le N \le \exp \left( \frac{\log_2 x \log_3 x}{3 \log_4 x} \right)
\]
we have
\[
1 - \tilde{P}_x([N, \infty))  \ll \exp \left( - \exp \left( \frac{\log N \log_3 N}{(1 + o(1)) \log_{2} N} \right) \right).
\]
\end{lemma}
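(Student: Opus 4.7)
The plan is to mimic the proof of Theorem \ref{Kerr and Klurman improvement}, substituting $\chi_p$ for the random multiplicative function $f$ and Lemma \ref{high moment inequality} for Proposition \ref{high moment sum f(n) ineq}. For each fixed integer $y$, I would use the identity
\[
y \sum_{n \le y} \frac{\chi_p(n)}{n} = \sum_{n \le y} g_p(n) + \sum_{n \le y} \chi_p(n)\{y/n\}, \qquad g_p := \chi_p * 1 \ge 0,
\]
and control the two terms separately. The lower bound $\sum_{n \le y} g_p(n) \gg y/\log y$ holds with overwhelming probability: it follows from $\sum_{n \le y} g_p(n) \ge \sum_{q \le y,\, q \text{ prime}}(1 + \chi_p(q))$ together with a large-deviation bound on $|\sum_{q \le y} \chi_p(q)|$ obtained from Lemma \ref{high moment inequality} with $b_n = \mathds{1}_{n \text{ prime}}$. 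The upper bound on $|\sum_{n \le y} \chi_p(n)\{y/n\}|$ comes from Lemma \ref{high moment inequality} with $b_n = \{y/n\}$, taken at the near-maximal moment $q = \exp(\log y \log_3 y/((1+o(1))\log_2 y))$. Markov's inequality then yields a per-$y$ failure probability of $\exp(-\exp(\log y \log_3 y/((1+o(1))\log_2 y)))$, matching the target bound when $y = N$.

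To upgrade this to a uniform bound over $y \ge N$, I would split $[N, \infty) = [N, Y_0] \cup (Y_0, \infty)$ with $Y_0 := \exp(\log_2 x \log_3 x/\log_4 x)$, chosen so the error term $(\log x/x)(\log y^q)(4 y^{3/2})^q$ in Lemma \ref{high moment inequality} stays negligible at the chosen moment. On $[N, Y_0]$, a union bound over integer $y$ costs a factor of $Y_0$, which is absorbed into the exponent $\exp(\log N \log_3 N/\log_2 N)$ thanks to the assumed lower bound on $N$. For $y$ far beyond $Y_0$, partial summation with the P\'olya-Vinogradov inequality gives $\sum_{n \le y} \chi_p(n)/n = L(1,\chi_p) + O(\sqrt{x}\log x/y)$, and standard distributional estimates of the form $\Prob(L(1,\chi_p) < (\log x)^{-c}) \ll \exp(-\exp((\log x)^c))$, far smaller than the target, secure positivity for $y$ beyond a threshold $Y_1 \le \sqrt{x}(\log x)^{O(1)}$.

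The main obstacle is the intermediate range $Y_0 < y \le Y_1$, where Lemma \ref{high moment inequality} cannot deliver the maximal moment (the second error term becomes too large) yet the P\'olya-Vinogradov tail is not yet small relative to $L(1,\chi_p)$. I would handle this by a dyadic decomposition $[Y_0, Y_1] = \bigcup_k [2^k, 2^{k+1})$: within each block, the oscillation of the partial sum is bounded by $O(\sqrt{x}\log x/2^k)$ via partial summation and P\'olya-Vinogradov, so it suffices to verify that $\sum_{n \le 2^k}\chi_p(n)/n$ exceeds the oscillation amplitude at each dyadic endpoint. The per-endpoint bound, obtained from Lemma \ref{high moment inequality} at the reduced moment $q \asymp \log x/\log y$, remains exponentially small in $\log x/\log y$, which is more than enough to absorb the $O(\log x)$-factor union bound over the $O(\log x)$ dyadic endpoints.
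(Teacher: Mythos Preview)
Your core strategy matches the paper's exactly: the identity $y\sum_{n\le y}\chi_p(n)/n = \sum_{n\le y}g_p(n) + \sum_{n\le y}\chi_p(n)\{y/n\}$, the lower bound $\sum_{n\le y} g_p(n)\ge\sum_{q\le y}(1+\chi_p(q))$, and Lemma~\ref{high moment inequality} applied to both sums at the near-maximal moment. The resulting per-$y$ failure bound is correct and is precisely what the paper obtains.

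The genuine gap is your handling of the intermediate range $Y_0 < y \le Y_1$. The P\'olya--Vinogradov oscillation bound $O(\sqrt{x}\log x/2^k)$ is only effective once $2^k\gtrsim\sqrt{x}\log x$; for $2^k$ near $Y_0=\exp(\log_2 x\,\log_3 x/\log_4 x)$ it is astronomically large, and even the trivial bound on the oscillation of $\sum_{n\le y}\chi_p(n)/n$ over a full dyadic block is $\log 2$, which already exceeds the typical endpoint value $\asymp 1/\log y$. So positivity at dyadic endpoints does not propagate across the block. A second issue: for $y$ near $Y_1\sim\sqrt{x}$ your reduced moment $q\asymp\log x/\log y$ is $O(1)$, so ``exponentially small in $\log x/\log y$'' degenerates to a constant and does not even absorb the $O(\log x)$ union bound over dyadic endpoints.

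The paper resolves both problems with one device: checkpoints $y_{i+1}=y_i+y_i/(100\log y_i)$ covering $[N,x^{1/2+\varepsilon}]$, of which there are only $O((\log x)^2)$. The oscillation between consecutive checkpoints is trivially $\le 1/(100\log y_i)$, so it suffices to secure $\sum_{n\le y_i}\chi_p(n)/n>0.1/\log y_i$ at each $y_i$. For $y_i\le\exp(\sqrt{\log x})$ the paper applies Lemma~\ref{high moment inequality} at $q=\min\bigl(D(y_i),\,E(y_i)\bigr)$ with $E(y)\asymp\log x/\log y$; for $y_i>\exp(\sqrt{\log x})$ it switches to Elliott's second-moment bound (Lemma~\ref{Second moment ineq}), whose $1/y$ decay handles what the high-moment lemma cannot when $q$ is forced to be small. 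Finally, the tail $y>x^{1/2+\varepsilon}$ is disposed of \emph{deterministically} via Siegel's theorem $L(1,\chi_p)\gg_\varepsilon x^{-\varepsilon}$, avoiding any appeal to distributional estimates for $L(1,\chi_p)$.
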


\begin{proof}
For each $p \in (x, 2x]$ the P\'olya-Vinogradov inequality implies that
\[
\sum_{n > y} \frac{\chi_p(n)}{n} \ll \frac{\sqrt{x} \log x}{y}.
\]

By \cite[Theorem 11.4]{montgomery2007multiplicative}, we have $L(1, \chi_p) \gg (\log x)^{-1}$ as long as $L(s, \chi_p)$ has no Siegel zero. The last condition holds for all $p \in (x, 2x]$ with at most one possible exception \cite[Corollary 11.9]{montgomery2007multiplicative}. The chosen prime is exceptional (has a Siegel zero) with probability $O(1/\log x)$, which is smaller than the error term in Lemma \ref{P_x([N, infty)]) bound}. Hence, from now on, we can assume that our random prime $p$ is nonexceptional and $L(1, \chi_p) \gg (\log x)^{-1}$.

Thus there exists $C > 0$ such that for any sufficiently large $x$, and $y \ge C x^{1/2} (\log x)^2$ we have
\[
\sum_{n \le y} \frac{\chi_p(n)}{n} = L(1, \chi_p) - \sum_{n > y} \frac{\chi_p(n)}{n} > 0.
\]

Let us denote $g_p = \chi_p * 1$.

We have 
\[
\sum_{n \le y} \frac{\chi_p(n)}{n} = \frac{1}{y} \sum_{n \le y} g_p(n) + \frac{1}{y} \sum_{n \le y} \chi_p(n) \left\{ \frac{y}{n} \right\}. 
\]

Let us denote
\[
Pr_1(y) = \Prob\left( \frac{1}{y} \left| \sum_{n \le y} \chi_p(n) \left\{ \frac{y}{n} \right\} \right| > \frac{0.1}{\log y} \right), 
\quad 
Pr_2(y) = \Prob\left( \frac{1}{y} \sum_{n \le y} g_p(n) < \frac{0.2}{\log y} \right).
\]

Note that if both events do not take place, then $\sum_{n \le y} \frac{\chi_p(n)}{n} > \frac{0.1}{\log y}$. Thus the inequality $\sum_{n \le y'} \frac{\chi_p(n)}{n} > 0$ holds for all $y' \in \left[y, y + \frac{y}{10^2 \log y} \right]$.

Let $y_0 = N$ and $y_{i + 1} = y_i + \frac{y_i}{10^2 \log y_{i}}$. Suppose that $k$ is the least number such that $y_k > C x^{1/2} (\log x)^2$. Note that $k \ll (\log x)^2$. 

It is clear that
\[
1 - \tilde{P}_x([N, \infty)) \le \sum_{i = 0}^k \left( Pr_1(y_i) + Pr_2(y_i) \right).
\]

Hence, it is enough to prove that
\[
\sum_{i = 0}^k \left( Pr_1(y_i) + Pr_2(y_i) \right) \ll \exp \left( - \exp \left( \frac{\log N \log_3 N}{(1 + o(1)) \log_{2} N} \right) \right).
\]

By assumption, $N \le \exp \left( \frac{\log_2 x \log_3 x}{3 \log_4 x} \right)$, and thus
\[
\exp \left( - \exp \left( \frac{\log N \log_3 N}{(1 + o(1)) \log_{2} N} \right) \right) \gg B(x) := \exp \left( - (\log x)^{\frac{1}{3} + o(1)} \right).
\]

Since
\[
\sum_{n \le y} g_p(n) \ge \sum_{q \le y} g_p(q) = \sum_{q \le y} (1 + \chi_p(q)),
\]
where $q$ ranges over prime numbers, we get
\[
Pr_2(y) \le \Prob \left( \frac{1}{y} \left|  \sum_{q \le y} \chi_p(q) \right| > \frac{0.7}{\log y}  \right).
\]

For $y_i > \exp(\sqrt{\log x})$ we apply Lemma \ref{Second moment ineq} and Markov's inequality to obtain
\[
\left( Pr_1(y_i) + Pr_2(y_i) \right) \ll (\log x)(\log y_i)^3 \left( \frac{1}{y_i} + \frac{y_i}{x} \right) \ll \exp\left( -(\log x)^{\frac{1}{2} + o(1)} \right).
\]
As $k \ll (\log x)^2$, we have
\[
\sum_{\substack{y_i > \exp(\sqrt{\log x}) \\ i \le k}} \left( Pr_1(y_i) + Pr_2(y_i) \right) \ll \exp\left( -(\log x)^{\frac{1}{2} + o(1)} \right) \ll B(x).
\]

Now for $y_i \le \exp(\sqrt{\log x})$ we apply Lemma \ref{high moment inequality} with $h(y) = 10$. We take $q$ as large as possible with the restrictions
\[
q \le D(y_i) := \exp\left( \frac{\log y_i \log_3 y_i}{(1 + o_h(1)) \log_2 y_i} \right), \quad q \le E(y_i) := \frac{\log x}{10 \log (4 y_i^{1/2} \log y_i)}.
\]
The last restriction in view of Lemma \ref{high moment inequality} and Markov's inequality implies that
\[
\left( Pr_1(y_i) + Pr_2(y_i) \right) \ll (\log x) \exp(-q) + O(x^{-1/2}).
\]

Therefore
\begin{multline*}
\sum_{y_i \le \exp(\sqrt{\log x})} \left( Pr_1(y_i) + Pr_2(y_i) \right) \ll \\
 \sum_{y_i \le \exp(\sqrt{\log x})} \left( (\log x) \left( \exp(-D(y_i)) + \exp(-E(y_i)) \right) + O(x^{-1/2})\right) \ll \\
(\log x)  \sum_{y_i \le \exp(\sqrt{\log x})}\left(\exp(-D(y_i)) + \exp(-E(y_i)) \right) + O(x^{-1/2} (\log x)^2).
\end{multline*}

We have
\[
(\log x) \sum_{y_i \le \exp(\sqrt{\log x})} \exp(-E(y_i)) \ll (\log x)^3 \exp\left( - \sqrt{\log x} \right) \ll B(x).
\]

Also
\begin{multline*}
(\log x) \sum_{y_i \le \exp(\sqrt{\log x})} \exp(-D(y_i)) \ll
\\
(\log x)^3  \exp \left( - \exp \left( \frac{\log N \log_3 N}{(1 + o(1)) \log_{2} N} \right) \right) \ll  \exp \left( - \exp \left( \frac{\log N \log_3 N}{(1 + o(1)) \log_{2} N} \right) \right).
\end{multline*}
Here we used the assumed lower bound on $N$.

Combining all estimates, the result follows.
\end{proof}

Let 
\[
\pi(x; k, l) := \sum_{\substack{p \le x \\ p \equiv l \pmod{k}}} 1.
\]

\begin{lemma} \label{pi(x; k, l)}
Let $k \le \exp(C \sqrt{\log x})$.
\[
\pi(x; k, l) = \frac{\operatorname{Li}(x)}{\varphi(k)} - E_1 \frac{\chi_1(l)}{\varphi(k)} \int_2^x \frac{u^{\beta_1 - 1}}{\log u} \, du + O \left( x \exp(-c' \sqrt{\log x}) \right),
\]
where $E_1 = 1$ if there exists a quadratic Dirichlet character $\chi_1 \pmod k$ with real zero $\beta_1$ such that $\beta_1 > 1 - \frac{c}{\log k}$ and $E_1 = 0$ otherwise.

Moreover, if such character exists, then it is unique and $\chi_1(l) = \left( \frac{d}{l} \right)$, where $d$ is the product of relatively prime factors of the form
\[
-4, \,\, 8, \,\, -8, \,\, (-1)^{(p-1)/2} p \quad (p > 2).
\]
Also $|d|$ is the conductor of $\chi_1$ and $d \log^4 d \gg \log x$.
\end{lemma}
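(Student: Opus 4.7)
The plan is to reduce to the prime number theorem in arithmetic progressions with careful accounting of a possible Siegel zero, following the classical route through the explicit formula. By orthogonality of Dirichlet characters,
\[
\pi(x;k,l) = \frac{1}{\varphi(k)} \sum_{\chi \bmod k} \bar{\chi}(l)\, \pi(x,\chi),
\]
and partial summation converts each $\pi(x,\chi)$ into an integral of $d\psi(t,\chi)/\log t$ up to an error $O(\sqrt{x}\log^2(kx))$, where $\psi(x,\chi) = \sum_{n\le x}\Lambda(n)\chi(n)$. The principal character gives $\psi(x,\chi_0) = x + O(xe^{-c''\sqrt{\log x}})$ by the standard PNT with zero-free region, producing the main term $\operatorname{Li}(x)/\varphi(k)$.

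For non-principal $\chi$, the truncated explicit formula reads
\[
\psi(x,\chi) = -\sum_{|\gamma|\le T} \frac{x^\rho}{\rho} + O\!\left(\frac{x(\log kxT)^2}{T}\right),
\]
summed over non-trivial zeros $\rho = \beta+i\gamma$ of $L(s,\chi)$. The De la Vall\'ee--Poussin zero-free region together with Landau's theorem guarantee that all such zeros satisfy $\beta < 1 - c_0/\log(k(|\gamma|+2))$, except possibly one simple real zero $\beta_1$ of a single real character $\chi_1 \bmod k$. Taking $T = \exp(c''\sqrt{\log x})$ and using the assumption $k \le \exp(C\sqrt{\log x})$, the contribution of all non-exceptional zeros across all characters is absorbed into $O(xe^{-c'\sqrt{\log x}})$.

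The exceptional zero, if present, contributes $-x^{\beta_1}/\beta_1$ to $\psi(x,\chi_1)$; passing through partial summation yields $-\int_2^x u^{\beta_1-1}/\log u\, du$ in $\pi(x,\chi_1)$, and multiplication by $\bar{\chi}_1(l)/\varphi(k) = \chi_1(l)/\varphi(k)$ (as $\chi_1$ is real) produces the stated exceptional term. For the structural claim, real primitive Dirichlet characters are exactly the Kronecker symbols $(d/\cdot)$ with $d$ a fundamental discriminant, and every such $d$ admits a unique factorization into coprime prime discriminants of the form $-4,\,\pm 8,\,(-1)^{(p-1)/2}p$ (cf.\ Davenport, Ch.~5), which gives the listed description. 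The conductor bound $|d|\log^4|d| \gg \log x$ is a form of the Page--Landau inequality, expressing the rarity of Siegel zeros at small moduli. The main obstacle is uniform bookkeeping of constants across the orthogonality--explicit-formula--partial-summation chain to maintain the error $O(xe^{-c'\sqrt{\log x}})$ throughout the wide range $k \le \exp(C\sqrt{\log x})$; this is a routine but delicate combination of the Page--Landau theorem with the classical zero-free region.
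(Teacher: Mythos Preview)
Your proposal is correct and follows precisely the classical route the paper invokes: the paper's proof simply cites Davenport (Chapter~20, eq.~9 for the explicit formula with Siegel-zero term, integrated by parts; Chapter~5, eq.~9 for the structure of real primitive characters; Chapter~20, eq.~12 for the conductor bound), and you have accurately unpacked that content. One small sharpening: the bound $|d|\log^4|d|\gg\log x$ comes specifically from combining the defining inequality $\beta_1>1-c/\log k$ with the classical estimate $1-\beta_1\gg |d|^{-1/2}(\log|d|)^{-2}$, which in the paper's application (where $\log k\asymp\sqrt{\log x}$) yields the stated bound.
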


\begin{proof}
The first part of the Lemma follows from \cite[Chapter 20, equation 9]{davenport2013multiplicative} after integration by parts. For the second part see \cite[Chapter 5, equation 9]{davenport2013multiplicative} and \cite[Chapter 20, equation 12]{davenport2013multiplicative}.
\end{proof}

\begin{proof}[Proof of Theorem \ref{chi(n)/n}]
Let us take $N = c_1 \sqrt{\log x}$ and $k = 8 \prod_{2 < q \le N} q$, where $c_1$ is sufficiently small. Clearly $k \ll \exp(c_2 \sqrt{\log x})$.

Now we want to compare $\tilde{P}_x([1, N])$ and $P([1, N])$.
Let $f(n)$ be a sample of a Rademacher random completely multiplicative function. Also let us set $\Prob(f(-1) = 1) = \Prob(f(-1) = -1) = 1/2$. 

Let us denote
\[
S(f) = \left\{ l \pmod{k} : (l, k) = 1, \, p \equiv l \pmod{k} \Rightarrow \forall q \le N \, \left( \frac{q}{p} \right) = f(q), \left( \frac{-1}{p}\right)  = f(-1) \right\}.
\]

The quadratic reciprocity law implies that 
\[
\left| S(f) \right| = \frac{\varphi(k)}{2^{\pi(N) + 1}}.
\]

Also we note that for each $l \in S(f)$ we have
\[
\chi_1(l) = \prod_{q^{\alpha_q} || d} \left( \frac{q}{l} \right)^{\alpha_q} = f(d),
\]
where $q = -1, \alpha_{-1} = 1$ is included in the product if $d < 0$.

Putting all this together we obtain for a fixed $f$ 
\begin{equation} \label{chi_p = f probability}
\Prob \left( \forall n \in [1, N]  \,\, \chi_p(n) = f(n) \right) =
\frac{1}{2^{\pi(N)}} - E_0 \frac{f(d)}{2^{\pi(N)}} \frac{\int_x^{2x} \frac{u^{\beta_1 - 1}}{\log u} \, du}{\operatorname{Li}(2x) - \operatorname{Li}(x)} + O \left(\exp(-c' \sqrt{\log x}) \right),
\end{equation}
where $E_0 = 0$ if $E_1 = 0$ or $d < 0$, and $E_0 = 1$ otherwise.

Let $A_N$ be the set of completely multiplicative functions $f$ defined on $[1, N]$ that take values $\pm 1$ and such that $\sum_{n \le y} \frac{f(n)}{n}$ is positive for $1 \le y \le N$.

From (\ref{chi_p = f probability}) we deduce that
\begin{equation} \label{eq01}
\tilde{P}_x([1, N]) = P([1, N]) - E_0  \frac{\sum_{f \in A_N}f(d)}
{2^{\pi(N)}} \frac{\int_x^{2x} \frac{u^{\beta_1 - 1}}{\log u} \, du}{\operatorname{Li}(2x) - \operatorname{Li}(x)} + O(\exp(-c'' \sqrt{\log x})).
\end{equation}

We have
\begin{equation} \label{eq02}
\frac{\sum_{f \in A_N}f(d)}
{2^{\pi(N)}} = \Cov \left( \mathds{1}_{A_N}, f(d) \right).
\end{equation}
The right-hand side should be interpreted as the covariance in $\mathcal{F}$.

Also
\begin{equation} \label{eq03}
\left| \Cov \left( \mathds{1}_{A_N}, f(d) \right) - \Cov \left( \mathds{1}_{A}, f(d) \right)\right| \le 1 - P((N, \infty)).
\end{equation}
Finally, by Lemma \ref{P_x([N, infty)]) bound} and Theorem \ref{Kerr and Klurman improvement} (or \cite[Theorem 1.2]{kerr2022negative}), we obtain
\begin{equation} \label{eq04}
(1 - P((N, \infty))) + (1 - \tilde{P}_x((N, \infty))) \ll \exp \left( - \exp \left( \frac{\log_2 x \log_4 x}{(2 + o(1)) \log_3 x} \right) \right).
\end{equation}

Since 
\[
\tilde{P}_x = \tilde{P}_x([1, N]) + O(1 - \tilde{P}_x((N, \infty))), \quad P = P([1, N]) + O(1 - P((N, \infty))),
\]
the theorem follows from (\ref{eq01}), (\ref{eq02}), (\ref{eq03}), (\ref{eq04}).
\end{proof}

\subsection{Proof of Corollary \ref{Cor chi(n)/n}}

If $E_0 = 0$, then the result is obvious. Assume that $E_0 = 1$ and $d$ is the conductor of the character with Siegel zero.

Denote by $p_0$ the greatest prime divisor of $d$. Lemma \ref{pi(x; k, l)} implies that $p_0 \ge (1 + o(1)) \log d \ge (1 + o(1)) \log_2(x)$. Take $N' = p_0 - 1$.
Note that 
\[
\Cov \left( \mathds{1}_{A_{N'}}, f(d) \right) = 0,
\]
and
\begin{multline*}
\Cov \left( \mathds{1}_{A}, f(d) \right) - \Cov \left( \mathds{1}_{A_{N'}}, f(d) \right) \ll 1 - P((N', \infty)) \ll \\
\exp \left( - \exp \left( \frac{\log_3 x \log_6 x}{(1 + o(1)) \log_5 x} \right) \right),
\end{multline*}
where we used Theorem \ref{Kerr and Klurman improvement}.
The result follows. \qed

\textbf{Acknowledgements.}
This work was supported by the Russian Science Foundation under grant no 24-71-10005, \url{https://rscf.ru/project/24-71-10005/}.
I am grateful to Oleksiy Klurman and my advisor A. B. Kalmynin for valuable discussions.

\bibliography{mult}
\bibliographystyle{plainurl}

\end{document}